\date{}
\def\Aut{\operatorname{Aut}}
\def\dim{\operatorname{dim}}
\def\Im{\operatorname{Im}}
\theoremstyle{plain}
\newtheorem{theorem} {Theorem} [section]
\newtheorem{lemma} [theorem]{Lemma}
\newtheorem{proposition}[theorem]{Proposition}
 \theoremstyle{definition}
\newtheorem{example}  [theorem]  {Example}
\newtheorem{remark}{Remark}  [section]
\title[Schur rigidity of Schubert varieties]{Schur rigidity of Schubert varieties in rational homogeneous manifolds of Picard number one}
 \author[J. Hong]{Jaehyun Hong}
 \address{Center for Mathematical Challenges, Korea Institute for  Advanced  Study, 85 Hoegiro, Dongdaemun-gu, Seoul 02455, Korea}
 \email{jhhong00@kias.re.kr}
 \author[N. Mok]{Ngaiming  Mok}
 \address{Department of Mathematics,
The University of Hong Kong,
Pokfulam Road,
Hong Kong}
\email{nmok@hku.hk}
 \subjclass[2010]{Primary 53C30, 32G10; Secondary   14M15}
 \keywords{Schubert varieties, Schur rigidity, $\mathbb C^*$-actions, transverality}
\begin{document}

\begin{abstract}
Given  a rational homogeneous manifold $S=G/P$  of Picard number one and a Schubert variety $S_0 $ of $S$, the pair $(S,S_0)$ is said to be homologically
rigid if any subvariety   of $S$ having the same homology class  as $S_0$ must be
a translate of $S_0$ by the automorphism group of $S$.  The pair $(S,S_0)$ is said to be Schur
rigid if any subvariety  of $ S$ with homology class equal to a multiple of the homology class of $ S_0 $
must be a sum of  translates of $S_0$. Earlier we completely
determined homologically rigid pairs $(S,S_0)$ in case $S_0 $ is homogeneous
and answered the same question in   smooth non-homogeneous cases.
In this article we consider Schur rigidity, proving that
$(S,S_0)$ exhibits Schur rigidity whenever $S_0  $ is a non-linear smooth Schubert variety.

Modulo a classification result of the first author's, our proof
proceeds by a reduction to homological rigidity
by deforming a subvariety $Z$ of $  S$ with homology class equal to a multiple of the homology class of $  S_0 $ into
a sum  of   distinct  translates of $ S_0$,
 and by observing that the arguments for the homological rigidity
apply since any two translates of $S_0$ intersect in codimension at least two.
Such a degeneration   is achieved by means of   the $\mathbb C^*$-action associated with   the stabilizer of the Schubert variety  $T_0$ opposite to $S_0$. By transversality of general translates, a general translate of $Z$ intersects $T_0$ transversely and the $\mathbb C^*$-action associated with the stabilizer of $T_0$ induces a degeneration of $Z$ into a sum of translates of $S_0$, not necessarily distinct.  After investigating the Bialynicki-Birular decomposition associated with the $\mathbb C^*$-action we prove a refined form of transversality  to get a degeneration of $Z$ into a sum of distinct translates of $S_0$.
\end{abstract}

 \maketitle

\section{Introduction}

A rational homogeneous manifold $S$ is a complex projective algebraic variety on which a complex linear algebraic group $G$ acts transitively. In the current article we focus on rational homogeneous manifolds $S = G/P$ of Picard number 1. It may happen that the same projective manifold $S$ can be presented as a rational homogeneous manifold in two different ways.  In other words, taking $G$ to be identity component of the automorphism group of $S$, and $P \subset G$ to be a maximal parabolic subgroup, there may exist a simple complex Lie group $G' \subsetneq G$ such that writing $P' := P \cap G'$ the canonical map $G'/P' \hookrightarrow G/P = S$ is a biholomorphism.  Specifically, when S is a rational homogeneous manifold of type $(B_\ell,\alpha_\ell)$ (respectively, of type $(C_\ell; \alpha_1)$ or of type $(G_2,\alpha_1)$), the automorphism group of $S$ is of type $D_{\ell+1}$ (respectively, of type $A_{2\ell}$ or of type $B_3$) and thus we will regard $S$ as a rational homogeneous manifold of type $(D_{\ell+1},\alpha_{\ell+1})$ (respectively, of type $(A_{2\ell},\alpha_1)$ or of type $(B_3; \alpha_1)$).

We consider a rigidity problem on a pair $(S,S_0)$ consisting of a rational homogeneous manifold $S$ of Picard number 1 and a smooth Schubert variety $S_0 \subset S$. In the event that $S = G/P = G'/P'$ as in the above where $G' \subsetneq G$, $P' \subsetneq P$, taking a Borel subgroup $B' \subset P'$ and a Borel subgroup $B \subset P$ containing $B'$, homology groups of $S$ are generated by the finitely many Schubert varieties which are topological closures of the $B$-orbits (which are affine cells) on $S$.  {\it A priori} each $B$-orbit decomposes into the union of a finite number of $B'$-orbits.  However, since the sum of the ranks of homology groups of $S$ equals the number of $B$-orbits resp. the number of $B'$-orbits, each $B$-orbit must already be a $B'$-orbit, so that the set of Schubert varieties on $S$ does not change when $S = G'/P'$  is rewritten as $S = G/P$, which we will do throughout the article.

The ray generated by the homology class of a Schubert variety $S_0$ is extremal in the sense that if the sum   of the homology classes of two effective cycles is contained in the  ray, then both   classes are contained in the ray. It is an interesting problem in algebraic geometry to describe  the space of all effective cycles representing homology classes in this ray.
Trivially, any sum of   translates of $S_0$ by $G$ belongs to this space, but it contains more,
for example,   when $S$ is the projective linear space $\mathbb P^n$ and $S_0$ is the projective line $\mathbb P^1$ in $\mathbb P^n$.
On the other hand, when $S$ is the Grassmannian of $d$-dimensional subspaces of $\mathbb C^n$ and $S_0$ is a sub-Grassmannian, if both are non-linear, then  it consists only of sums of translates of $S_0$ (\cite{B}, \cite{Ho07}).

Given a Schubert variety $S_0$ of a rational homogeneous manifold $S$,
the pair $(S,S_0)$ is said to be {\it Schur
rigid} if any subvariety $Z \subset S$ having homology class   equal to a multiple of the homology class   of $S_0$,
 must be a sum of  translates of $S_0$ by $G$.
In this paper we consider the question of Schur rigidity.  Our main result is

 \begin{theorem}  \label{main theorem I} Let $S=G/P$ be a rational homogeneous manifold of Picard number one and let $S_0$ be a non-linear  smooth Schubert variety of $S$. Then, the pair $(S,S_0)$ is Schur rigid.
 \end{theorem}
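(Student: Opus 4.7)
The plan is to follow the reduction sketched in the abstract: given a subvariety $Z \subset S$ with homology class $k \cdot [S_0]$, construct a flat degeneration of $Z$ into a sum $\sum_{i=1}^k [g_i \cdot S_0]$ of \emph{pairwise distinct} translates of $S_0$, and then invoke the earlier homological-rigidity results proved by the authors. Distinctness is crucial because two distinct translates of a smooth Schubert variety $S_0$ meet in codimension at least two in $S$, which is precisely the technical condition that lets the arguments from the single-component homological-rigidity case go through for a reducible limit cycle; by flatness and semicontinuity, rigidity then propagates from the limit back to $Z$, forcing $Z$ to be a sum of translates of $S_0$.

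To build the degeneration, I would fix an opposite Schubert variety $T_0$ of complementary dimension, characterized by $\dim T_0 + \dim S_0 = \dim S$ and $[T_0] \cdot [S_0] = 1$, and denote by $L \subset G$ the stabilizer of $T_0$. Choose a one-parameter subgroup $\lambda : \mathbb{C}^* \to L$ whose Bialynicki--Birula plus-cell through a general point of $T_0$ has closure a translate of $S_0$. By Kleiman transversality applied to the $G$-action on $S$, after replacing $Z$ by a general translate we may arrange $Z \cap T_0$ to consist of $[Z] \cdot [T_0] = k$ reduced points lying in the open $B$-orbit of $T_0$. Taking the flat limit $Z_0 := \lim_{t \to 0} \lambda(t) \cdot Z$ and applying the Bialynicki--Birula decomposition yields $Z_0 = \sum_{p \in Z \cap T_0} [g_p \cdot S_0]$, a sum of translates of $S_0$ --- though, \emph{a priori}, not distinct.

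The main obstacle, and the step I expect to be most delicate, is upgrading this to a degeneration whose limit consists of $k$ \emph{pairwise distinct} translates. The approach would be to analyze the Bialynicki--Birula stratification of $S$ associated with $\lambda$, identify the locus in $\operatorname{Sym}^k(T_0)$ of those configurations of $k$ points whose induced attracting-closures coincide (a proper closed subset determined by the fixed-point geometry and the relative positions of the plus-cells), and then refine Kleiman transversality so that a generic translate $g \cdot Z$ avoids this subset. Here the classification of smooth non-linear Schubert varieties due to the first author is essential: it controls the geometry of $S_0$, $T_0$ and the possible coincidences, ruling out pathological configurations that would force non-distinct limits.

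Once the refined degeneration $Z \rightsquigarrow \sum_{i=1}^k [g_i \cdot S_0]$ with the $g_i \cdot S_0$ pairwise distinct is secured, all pairwise intersections in the limit have codimension at least two in $S$. The arguments developed earlier by the authors for the homological-rigidity problem then apply verbatim to each component of the limit cycle, and by semicontinuity and flatness of the degeneration the conclusion transports back to $Z$: every irreducible component of $Z$ is a translate of $S_0$, with matching multiplicities, completing the proof of Schur rigidity of $(S, S_0)$.
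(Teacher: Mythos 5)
Your high-level strategy matches the paper's exactly: degenerate an irreducible $Z$ with $[Z]=r[S_0]$ via the $\mathbb C^*$-action attached to the stabilizer $P_I^-$ of the opposite Schubert variety $T_0$, arrange for the limit to be a sum of \emph{distinct} translates of $S_0$, use the codimension-$\ge 2$ pairwise intersection of translates of a smooth $S_0$ to find lines on the smooth locus of the central fiber, and then feed the degeneration into the machinery previously developed for homological rigidity to derive a contradiction when $r\ge 2$. You also correctly identify distinctness of the limit components as the crux. So the skeleton is right.

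The genuine gap is in how you propose to obtain distinctness. You say one should ``identify the locus in $\operatorname{Sym}^k(T_0)$ of those configurations of $k$ points whose induced attracting-closures coincide (a proper closed subset...) and then refine Kleiman transversality so that a generic translate $g\cdot Z$ avoids this subset.'' This conflates two things. That the bad configurations form a \emph{closed} subset of the parameter space is easy (the assignment $g\mapsto gZ\cap T_0$ is a local morphism on the locus of transverse intersection, and coincidence of $\gamma$-images is a closed condition). What is genuinely hard is showing that the bad locus is \emph{proper}, i.e.\ that there exists at least one translate with all $\gamma$-limits distinct. This does not follow from Kleiman's theorem: Kleiman controls transversality of the intersection $gZ\cap T_0$ but says nothing about how the intersection points distribute among the fibers of the BB-projection $\gamma:\mathcal O\to\mathcal H$. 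The paper's Proposition~\ref{existence of perturbation} supplies precisely this missing existence statement, via a nontrivial mechanism: the birational ``collapsing'' $f:\widetilde{\mathcal O}=L_I\times_{P_1}\overline F\to\overline{\mathcal O}$, the observation (Proposition~\ref{exceptional locus is nonempty}) that its exceptional locus $\mathcal E$ is nonempty when $S$ has Picard number one, the key Lemma~\ref{key lemma} that some $p\in F$ has $\lim_{t\to0}t.p\in\mathcal E$, and Lemma~\ref{pushing out} producing a vector field fixing one intersection point while pushing a colliding one out of the fiber $F$. None of this is implicit in ``refined Kleiman transversality,'' and without it the step you flag as delicate remains unproved.

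Two smaller inaccuracies are worth noting. First, you assign the classification of smooth non-linear Schubert varieties the role of ``ruling out pathological configurations that would force non-distinct limits''; in fact Propositions~\ref{existence of perturbation}--\ref{degeneration} are entirely classification-free, and the classification (Proposition~\ref{Classification of smooth Schubert varieties}) is invoked only in Proposition~\ref{I II codimension} to verify conditions (I), (II) and the codimension-$2$ pairwise intersection for each $S_0$. Second, the concluding ``by semicontinuity and flatness the conclusion transports back to $Z$'' glosses over the actual mechanism: the paper uses a smooth open piece $\widehat S_0$ of the central fiber uniruled by lines, applies Kodaira stability to propagate these lines to nearby fibers, uses condition (II) on VMRTs to identify the deformed VMRT with that of some translate $g_tS_0$, and then applies the containment statement of Proposition~\ref{characterization-modification}(1) to force $t.Z'=g_tS_0$, contradicting $r\ge2$. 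This is a specific deformation-theoretic argument, not a generic semicontinuity statement.
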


\noindent
Here, after embedding $S$ into a projective space $\mathbb P^N$ by the ample generator of the Picard group of $S$, we say $S_0$ is linear if $S_0$ is a linear subspace of $\mathbb P^N$. Clearly, $(S,S_0)$ is not Schur rigid if $S_0$ is linear and is not maximal, i.e., there is a linear Schubert variety $S_0'$ of $S$ containing  $S_0$ properly. For the case when $S_0$ is a maximal linear space, see Proposition \ref{maximal linear space case}. \\

One of the methods to determine Schur rigidity is to use differential systems. Given a Borel subgroup of $G$, Schubert varieties of $S=G/P$ are indexed by a certain subset $W^P$ of the Weyl group $W$ of $G$. For $w \in   W^P$, let $S(w)$ denote the Schubert variety of type $w$. Kostant  constructed a representative  of the cohomology class  Poincar\'e dual to the homology class of $ S(w) $ by a closed positive $(k,k)$-form  $\phi(w)$ (\cite{Ko63}). The Schur differential system  is defined by the space of tangent  subspaces of $S$ on which $\phi(v)$ vanish for all $v$ with $\dim S(w) = \dim S(v)$ and $S(w) \not=S(v)$, and the Schubert differential  system   is defined by the space of tangent spaces of $G$-translates of $S(w)$. Then the Schur rigidity problem is reduced to two problems: (1) the equality of  the Schur differential system   and the Schubert differential system,  and (2) the uniqueness of integral varieties of the Schubert differential system up to the action of $G$.

Bryant introduced these differential systems and investigated their integral varieties for various Schubert varieties in  compact irreducible Hermitian symmetric spaces (\cite{W}, \cite{B}).   Hong proved the Schur rigidity of   smooth Schubert varieties in compact irreducible Hermitian symmetric spaces and some singular Schubert varieties in Grassmannians by reducing the problems (1) and (2) to the vanishing of certain Lie algebra cohomology spaces (\cite{Ho07}, \cite{Ho05}). Robles-The developed this method further to characterize Schur rigid Schubert varieties in compact irreducible Hermitian symmetric spaces (\cite{RoT12}, for the flexibility see \cite{CoRo13} and \cite{Ro13}).
 It is not easy to apply this differential geometric  method to Schubert varieties of  rational homogeneous manifolds other than compact Hermitian symmetric spaces:  neither of the two steps (1) and (2) can be reduced to   Lie algebra cohomology computations.

There is another form of rigidity weaker than Schur rigidity.
We say that the pair $(S,S_0)$ is {\it homologically rigid} if any subvariety $Z \subset S$ having homology class equal to $[S_0]$ must be
a $G$-translate of $S_0$. Coskun determined homologically rigid Schubert varieties of   Grassmannians and orthogonal Grassmannians by using algebro-geometric methods (\cite{Co11}, \cite{Co14}, for a survey see \cite{Co18}).

In the previous work (\cite{HoM}) we used a different method, the
geometric theory of uniruled projective manifolds based on varieties of minimal rational tangents, which was developed by Hwang-Mok and was applied to characterize uniruled projective manifolds of Picard number one and to prove their deformation  rigidity  (\cite{HwM98}, \cite{HwM99}, \cite{HwM01}, \cite{HwM02}, \cite{HwM04b}, \cite{HwM05}).
 We generalized the theory to the pair consisting of a uniruled projective manifold and one of its projective submanifold (\cite{HoM09}) and  proved that  smooth Schubert varieties in a rational homogeneous manifold of Picard number one are homologically rigid with certain obvious exceptions (Theorem 1.1 of \cite{HoM}, Theorem 1.4 of \cite{HoK}).

Recently, Mok-Zhang proved that  Schubert varieties of rational homogeneous manifolds $S$ of Picard number one, which are associated to subdiagrams of the Dynkin diagram of $S$, are Schubert rigid, i.e.,    integral varieties of their Schubert differential systems are unique  up to the action of $G$   (\cite{MkZh}).
The equality of the Schur differential system and the  Schubert differential system will imply Schur rigidity of these Schubert varieties. However, at the  moment  we don't have any tool to prove the equality of the two differential systems.

 These rigidity problems go back to the smoothability problem (\cite{B}). A homology class ${\bf s}$ in $H(S, \mathbb Z)$ is said to be {\it smoothly representable} if there is a nonsingular subvariety of $S$ whose homology class is ${\bf s}$. If the pair $(S, S_0)$ is Schur rigid and $S_0$ is singular, then any multiple $r[S_0]$ of the homology class of $S_0$ is not smoothly representable. For other kinds of smoothability and related  results, see \cite{HRT}, \cite{B}, \cite{Co11}, \cite{Co14}, \cite{Co18}.
\\

The pair $(S,S_0)$ is Schur rigid if (and only if) it is homologically rigid and
  there is no irreducible reduced subvariety of $S$ whose homology class is    $r$ times  the homology class of $S_0$ for any $r \geq 2$.
Our strategy is to deform an irreducible reduced subvariety $Z$ of $S$ representing $r[S_0]$ to a sum $Z_{\infty}$ of $G$-translates of $S_0$ such that at least one of the irreducible components has multiplicity one. We impose the condition on the multiplicity to have a neighborhood of a minimal rational curve in the smooth locus of $Z_{\infty}$.

To get such a deformation, we use the Schubert variety $T_0$ opposite to $S_0$. By transversality of general translates, there is a translate of $Z$ which intersects $T_0$ transversely. The stabilizer  of $T_0$ is a parabolic subgroup of $G$. Applying the $\mathbb C^*$-action $\lambda$ associated with this parabolic subgroup and taking its limit, we get a degeneration of $Z$ to a sum of translates of $S_0$.  But this is not enough, because irreducible components of the limit may have multiplicities $>1$. To overcome this difficulty we introduce a   transversality stronger than the usual transversality.

We   say that $Z$ intersects $T_0$ transversely with respect to the $\mathbb C^*$-action $\lambda$ if $Z$ intersects $T_0$ transversely and the $\lambda$-limits of points in the intersection $Z \cap T_0$ are all distinct.
Then the $\mathbb C^*$-action $\lambda$ gives a degeneration of $Z$ into
a sum $Z_{\infty}$  of $r$ distinct  translates of $S_0$.
 When $S_0$ is a smooth Schubert variety of a rational homogeneous manifold $S$ of Picard number one, any two translates of $S_0$ intersects in codimension $\ge 2$.  From this it follows the  existence of a line   lying on the smooth part of $Z_{\infty}$.

The problem of perturbing $Z  $
to a subvariety $Z'$  intersecting  $T_0$ transversely with respect to the $\mathbb C^*$-action $\lambda$,  is investigated in Section 3. The stabilizer $P_I^-$ of $T_0$ has an open orbit $\mathcal O$ in $T_0$, which  is an $L_I$-homogeneous bundle with fiber   a $U_I^-$-orbit $F$, where $L_I$ is the reductive part of $P_I^-$ and $U_I^-$ is the unipotent part of $P_I^-$. Then the  closure $T_0=\overline{\mathcal O}$ is  the union of $L_I$-translates of the closure $\overline{F}$. The problem is reduced to the question whether, for two points $p_1, p_2$  in $Z \cap T_0$ lying in  $F$, the isotropy of $G$ at $p_1$ can move $p_2$ in a direction outside the sum   of   the tangent space $T_{p_2} Z$ of $Z$ and the tangent space $ T_{p_2}F$ of $F$. Here, the $\mathbb C^*$-action $\lambda$ plays a role again. Especially,  the Bialynicki-Birular ($\pm$)-decomposition $\coprod_{\sigma}\mathcal O^{\pm}_{\sigma}$ associated with $\lambda$, which  is nothing but the $P_I^{\pm}$-orbit decomposition of $S$,  and the relation  among $\mathcal O_{\sigma}^+$ and $\mathcal O^-_{\tau}$ are used.   For a detailed argument see Proposition \ref{existence of perturbation}.

As we explained in the above, the relation among Schubert varieties, parabolic subgroups and $\mathbb C^*$-actions is one of the main ingredients in the proof. We collect notations, definitions, basic properties in Section 2. In Section 4 we complete the proof by generalizing the arguments for $r=1$ when the special fiber is a smooth Schubert variety to the case for $r \geq 2$ when the special fiber is   a sum of its translates (Proposition \ref{characterization-modification}) and by showing that any two translates of a smooth Schubert variety in a rational homogeneous manifold of Picard number one intersect  in codimension $\geq 2$ (Proposition \ref{I II codimension}). At the end of the paper, we prove that a maximal linear Schubert variety of $S$ is Schur rigid with the exception of  some obvious cases (Proposition \ref{maximal linear space case}).\\

This paper deals with smooth Schubert varieties in rational homogeneous manifolds of Picard number one.  While most of the arguments work in Section 3 for Schubert varieties in rational homogeneous manifolds without the conditions on smoothness and on the Picard number, these two conditions are essential in Section 4 (Proposition \ref{characterization-modification}).  While it is perceivable that generalizations to the case of smooth Schubert varieties on a rational homogeneous manifold of higher Picard number (where we have to deal with several minimal rational components) could be obtained by modifying the methods of Hong-Mok \cite{HoM} and the current article, there are intrinsic difficulties in the case of singular Schubert varieties even when the ambient rational homogeneous manifold $S$ is of Picard number 1.  In fact, it is crucial to use Kodaira Stability Theorem, which is an application of deformation theory of rational curves.  To do the same for a singular Schubert variety $S_0$ on $S$ of Picard number 1 we need at least to have an ample supply of minimal rational curves of $S$ lying on the smooth locus of $S_0$.  Unfortunately, the latter fails to be the case for certain singular Schubert varieties (cf. Remark \ref{Rem:characterization-modification}).

\section{Preliminaries}

 We  fix notations and explain properties which will be used later in this paper.
 A basic reference is \cite{Sp}.

\subsection{Parabolic subgroups}

Let $G$ be a connected simple algebraic group over $\mathbb C$.
  Fix a Borel subgroup $B$ of $G$ and a maximal torus $T$  in $B$.   Denote by $\Delta^+$ the system of positive roots of $G$ and by $\Phi=\{ \alpha_1, \dots, \alpha_{\ell}\}$ the system of simple roots of $G$.
  For any subgroup $H$ of $G$ invariant under the conjugation by $T$, let $\Delta(H)$ denote the set of all roots $\alpha$ whose root space $\frak g_{\alpha}$ is contained in the Lie algebra of $H$. For example, $\Delta(B) = \Delta^+$.

For a subset $I$ of $\Phi$ denote by $P_I^{\pm}$ the   parabolic subgroup of $G$ whose Lie algebra is  $$(\frak t + \sum_{ \alpha \in \mathbb ZI \cap \Delta } \frak g_{\alpha}) + \sum_{\alpha \in \Delta^+ \backslash \mathbb Z I} \frak g_{\pm\alpha}.$$  Let  $L_I$ be its reductive part containing $T$ and $U_I^{\pm}$ be its  unipotent  part. Then $\Delta(L_I)=\Delta \cap \mathbb Z I$ and $\Delta(U_I^{\pm}) =\Delta^{\pm} \backslash \mathbb Z I$.  For $I=\emptyset \subset \Phi$, $P_{\emptyset}^{\pm}$ is a Borel subgroup $B^{\pm}$ of $G$ ($B^+$ is the given Borel subgroup $B$ and $B^-$ is the Borel subgroup opposite to $B$) and $U_{\emptyset}^{\pm}$ is the unipotent part $U^{\pm}$ of $B^{\pm}$.
As in the case of $B$, we will sometimes use   notations $P_I$, $U_I$, $U$ instead of  $P_I^+$, $U_I^+$, $U_{\emptyset}^+$.

    To each simple root $\alpha_k$ we associate a parabolic subgroup $ P_{\Phi -\{\alpha_k\}}$ of $G$.
 The homogeneous manifold  $S=G/P_{\Phi-\{\alpha_k\}}$ is called the rational homogeneous manifold associated to the simple root $\alpha_k$ or of type $(G, \alpha_k)$. From now on we fix a simple root  $\alpha_k$ and set $P=P_{\Phi -\{\alpha_k\}}$,  $L_P=L_{\Phi -\{\alpha_k\}}$,  $U_P=U_{\Phi -\{\alpha_k\}}$ and $S=G/P$.

\subsection{Schubert varieties}

 Let $  W$ be the Weyl group of $G$ and let $W_P$ be the Weyl group of the reductive part of $P$. Then  the set of $T$-fixed points in $S=G/P$ is indexed by the  set of right cosets $W/W_P$: letting $x_0$ be the base point of $S $ with the isotropy group $P$,  the map $[w] \in W/W_P \mapsto x_w:=wx_0$ is a bijective map from $W/W_P$ to the $T$-fixed point set in $S$. The $B$-orbit decomposition of $S$ is given by $S=\coprod _{[w] \in   W/W_P} B.x_w$.

 For an element $w \in  W$ define a subset $\Delta(w)$ of $\Delta^+$ by $\Delta(w)=\{ \beta \in \Delta^+  :  w(\beta) \in -\Delta^+\}$.
Let $ W^P$ be the subset of  $  W$  consisting of
 $  w \in   W$ such that $ \Delta(w) \subset \Delta(U_P) $.   Then $W^P$ is a set of representatives of   $W/W_P$ so that we have
  $$S=\coprod _{w \in   W^P} B.x_w  .$$
  For each $w \in   W^P$, the closure $S(w)$  of $B. x_w$ is called  the {\it Schubert variety of type} $w$. For the Borel subgroup $B^-$ opposite to $B$ we also have a   decomposition $S=\coprod _{w \in   W^P} B^-.x_w$, and we call the closure $T(w)$ of $B^-.x_w$   the {\it opposite Schubert variety  of type} $w  $, or, the {\it Schubert variety opposite to} $S(w)$. \\

Each orbit  $B^{\pm}.x_w$ is biholomorphic to an affine cell $\mathbb C^{k^{\pm}}$ for some $k^{\pm}$ depending on $w$, so that we get a coordinate system on it.
We describe this coordinate system more precisely. Consider a root $\alpha$ as a  character of $T$, i.e., a homomorphism from $ T $ to the multiplicative group $\mathbb C^* $, so that, for a root vector $e_{\alpha} \in \frak g_{\alpha}$, we have $Ad(h)e_{\alpha} =\alpha(h)e_{\alpha}$ for $h \in T$. Then   there exist  a unique closed subgroup $U_{\alpha} \subset G$ and an isomorphism $u_{\alpha}: \mathbb C \rightarrow U_{\alpha}$ such that $\Im du_{\alpha} =\frak g_{\alpha}$ and  for $h \in T$
$$hu_{\alpha}(x)h^{-1} =u_{\alpha}(\alpha(h)x)$$
 where $x \in \mathbb C $ (8.1.1 of \cite{Sp}).

Let $U_P^-$ be the unipotent part of $P^-$, the parabolic subgroup of $G$ opposite to $P$. Then $ U_P^-.x_0$ is an open neighborhood of $x_0$ biholomorphic to $U_P^-=\prod_{ \alpha \in \Delta(U_P )} U_{-\alpha}$.
Thus, for $w \in   W^P$, $w(U_P^-).x_w=wU_P^-.x_0$ is an open neighborhood of $x_w$ in $S=G/P$ biholomorphic to $w(U_P^-)=\prod_{ \alpha \in \Delta(U_P )} U_{-w(\alpha)}$.
Similarly,  $B.x_w=(U \cap w(U_P^-)).x_w$ is an open neighborhood of $x_w$ in $S(w)$ biholomorphic to $U \cap w(U_P^-)=w\left(\prod_{\beta \in \Delta(w)}U_{-\beta}\right)=\prod_{\alpha \in \Delta(w^{-1})} U_{\alpha}$
 and $B^-.x_w=(U^- \cap w(U_P^-)).x_w$ is an open neighborhood of $x_w$ in $T(w)$ biholomorphic to $U^- \cap w(U_P^-)$.

Let $A$ be a set consisting of roots. By taking the product of the isomorphisms $u_{\alpha}$ where $\alpha \in A$, we get  an isomorphism
$$u=\prod_{\alpha \in A} u_{\alpha}:\mathbb C^{|A|} \rightarrow \prod_{\alpha \in A} U_{\alpha} $$
and $u$ defines coordinates $(x_{\alpha})_{\alpha \in A}$ on $\prod_{\alpha \in A} U_{\alpha} $. We remark that the coordinates depend on the order of roots in $A$.
By taking an appropriate set $A$ of roots, we get coordinates systems on $w(U^-_P).x_w=wB.x_0$, $B.x_w$, and $B^-.x_w$.\\

For a subset  $I $  of  $\Phi$ let $W_I$ be the Weyl group of $L_I$. Then the left coset space  $W_I \backslash W^P$   parameterizes $P_I^{\pm}$-orbits in $S=G/P$ so that we have
$$S=\coprod_{[\sigma]\in   W_I \backslash    W^P} P_I^{\pm}.x_{\sigma}$$
(8.4.6 of \cite{Sp}).
The closure of a $P_I$-orbit in $S$ is a Schubert variety and the closure of a $P_I^-$-orbit in $S$ is an opposite Schubert variety (For,  an irreducible $B$-invariant   subvariety of $S$ is the closure of a $B$-orbit and an irreducible $B^-$-invariant   subvariety of $S$ is the closure of a $B^-$-orbit).

 Conversely,  the stabilizer of a Schubert variety in $G$ is a parabolic subgroup $P_I$ for some $I \subset \Phi$.

 \begin{proposition} \label{the base has positive dimension}
 For  $w \in W^P$, let $I=\Phi \cap w(\Delta(  P^-))$. Then
 \begin{enumerate}
 \item the stabilizer of $S(w)$ in $G$ is $P_I$;
 \item $L_I.x_w$ is closed, and  has positive dimension if $w \not=Id$.

 \end{enumerate}

 \end{proposition}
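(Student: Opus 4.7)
The plan is to derive both assertions from root-system bookkeeping: for (1), by testing which simple-root subgroups $U_{-\alpha_j}$ stabilize $S(w)$; for (2), by analyzing the isotropy $H := L_I \cap wPw^{-1}$ of $x_w$ in $L_I$ and identifying it as a parabolic subgroup of $L_I$.

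For (1), since $S(w) = \overline{B.x_w}$ is $B$-invariant, its stabilizer is a parabolic $P_J \supseteq B$, so it suffices to determine the simple roots $\alpha_j$ with $U_{-\alpha_j}.S(w) \subseteq S(w)$. The standard $SL_2$-computation $\lim_{t \to \infty} u_{-\alpha_j}(t).x_w = s_j.x_w = x_{s_jw}$ reduces this to $s_jw \leq w$ in the Bruhat order on $W/W_P$, which by the standard criterion is equivalent to $w^{-1}\alpha_j \in \Delta(P^-)$ (the cases $w^{-1}\alpha_j \in \Delta(L_P)$ and $w^{-1}\alpha_j \in -\Delta(U_P)$ correspond respectively to $s_jw \equiv w \pmod{W_P}$ and $\ell(s_jw) = \ell(w)-1$, while $w^{-1}\alpha_j \in \Delta(U_P)$ gives $s_jw \not\leq w$). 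Hence $J = \Phi \cap w(\Delta(P^-)) = I$.

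For the closedness part of (2), $L_I.x_w \cong L_I/H$ has root set $\Delta(H) = (\mathbb{Z}I \cap \Delta) \cap w(\Delta(P))$. Two inputs force $H$ to be a parabolic subgroup of $L_I$: (a) since $P^-$ is a subgroup, $w(\Delta(P^-))$ is closed under root addition, so the containment $I \subseteq w(\Delta(P^-))$ propagates to $\mathbb{Z}_{\geq 0}I \cap \Delta \subseteq w(\Delta(P^-))$; (b) the identity $-\Delta(P^-) = \Delta(P)$, which follows from the decompositions $\Delta(P) = \Delta(L_P) \sqcup \Delta(U_P)$ and $\Delta(P^-) = \Delta(L_P) \sqcup (-\Delta(U_P))$ together with $-\Delta(L_P) = \Delta(L_P)$. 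Combining (a) and (b) yields $\mathbb{Z}_{\leq 0}I \cap \Delta \subseteq w(\Delta(P))$, so $\Delta(H)$ contains all negative roots of $L_I$; thus $H \supseteq L_I \cap B^-$ is parabolic in $L_I$, and $L_I.x_w = L_I/H$ is projective, hence closed in $S$.

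For positive-dimensionality when $w \neq \mathrm{Id}$, pick $\alpha_i \in \Phi$ with $\ell(s_iw) < \ell(w)$, so that $w^{-1}\alpha_i \in -\Delta^+$; then $\beta := -w^{-1}\alpha_i \in \Delta^+$ satisfies $w\beta = -\alpha_i \in -\Delta^+$, so $\beta \in \Delta(w)$, and $w \in W^P$ forces $\beta \in \Delta(U_P)$. Hence $w^{-1}\alpha_i \in -\Delta(U_P) \subseteq \Delta(P^-) \setminus \Delta(P)$, so $\alpha_i \in I$ but $\alpha_i \notin \Delta(H)$, producing a non-zero tangent direction for $L_I.x_w$ at $x_w$ and therefore $\dim L_I.x_w \geq 1$. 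The main obstacle throughout is the careful sign- and subsystem-bookkeeping; the linchpin observations are the closure of $w(\Delta(P^-))$ under root addition and the identity $-\Delta(P^-) = \Delta(P)$, which together force the isotropy of $x_w$ in $L_I$ to contain the opposite Borel $L_I \cap B^-$.
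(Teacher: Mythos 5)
Your proposal is correct and follows essentially the same route as the paper: for (1) you reduce stabilization to the Bruhat-order criterion $s_jw \le w$ in $W/W_P$ and identify the resulting simple roots with $\Phi \cap w(\Delta(P^-))$, which is exactly the content of the cited fact in Brion--Polo that the paper invokes; for (2) your observations that $w(\Delta(P^-))$ is closed under root-addition and that $-\Delta(P^-) = \Delta(P)$ are precisely what make the paper's one-line chain $L_I \cap B \subset L_I \cap wP^-w^{-1}$, hence $L_I \cap B^- \subset L_I \cap wPw^{-1}$, go through, and the positive-dimensionality argument (choose $\alpha$ simple with $w^{-1}\alpha < 0$, show $w^{-1}\alpha \in \Delta(U_P^-)$ using $w \in W^P$, and conclude $\mathfrak{g}_\alpha$ is not in the isotropy) is identical. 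One small imprecision to note: the asserted $SL_2$-limit $\lim_{t\to\infty} u_{-\alpha_j}(t).x_w = x_{s_jw}$ holds only when $U_{-\alpha_j}$ does not already fix $x_w$, i.e.\ when $w^{-1}\alpha_j \in \Delta(U_P)$; when $w^{-1}\alpha_j \in \Delta(U_P^-)$ the orbit $U_{-\alpha_j}.x_w$ is trivial yet $x_{s_jw} \ne x_w$, so the displayed formula is literally false there --- but this does not damage the argument, since the case analysis and the standard Bruhat-order criterion you invoke carry the conclusion in all cases.
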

 \begin{proof}

 (1) (Section 2.6 of \cite{BrPo99}).    For a simple root $\alpha$,  the minimal parabolic subgroup $P_{\alpha}$ acts invariantly on $BwB$ if and only if
$w^{-1}(\alpha)<0$, or equivalently, $\ell(s_{\alpha}w) = \ell(w) -1$. From $s_{\alpha}w=ws_{w^{-1}(\alpha)}$ it follows that $P_{\alpha}$ acts invariantly on $S(w)$ for any simple root $\alpha$ with $w^{-1}(\alpha) \in \Delta(L_P) $. Thus  the stabilizer group of $S(w)$ is $P_{ I } $  where $ I =\{ \alpha_1, \cdots, \alpha_{\ell} \} \cap  w  (\Delta(P^-)) $.

(2) $L _I\cap B$ is contained in $L_I \cap wP^-w^{-1}$ and thus  $L _I\cap B^-$ is contained in  the isotropy $L_I \cap wPw^{-1}$  of $L_I$ at $x_w=wx_0$. Therefore, the $L_I$-orbit $L_I.x_w$ is closed.

  If $ w  \not = id$, there is a simple root $\alpha$ such that $ w ^{-1}(\alpha)<0$. Then $\alpha$ is contained in $I =\{ \alpha_1, \cdots, \alpha_{\ell} \} \cap  w  (\Delta(P^-))$.
Then,  $w^{-1}(\alpha) $ is contained in $\Delta(U_P^-)$
 because any $\beta <0$ with $w(\beta) >0$ is contained in $\Delta(U_P^-)$ by definition of $ W^P$.
Thus $U_{\alpha}  \subset (L_I \cap U ) \cap  w U_P^-  w ^{-1}$ and
 $ x _{ w } \not= U_{ \alpha} . x _{ w } \subset L_I. x  _{ w }$.
   \end{proof}

\subsection{$\mathbb C^*$-actions}

Let $\lambda: \mathbb C^* \rightarrow T$ be  a cocharacter of $T$,  i.e., a homomorphism from the multiplicative group $\mathbb C^*$ to $T$. Then $\lambda$ induces a $\mathbb C^*$-action on $G$: $t.g = \lambda(t)g\lambda(t)^{-1}$ for $t \in \mathbb C^*$ and $g \in G$,
which again induces a $\mathbb C^*$-action on $G/P$ because $T$ normalizes $P$.
Then  for a root $\alpha$ and the isomorphism $u_{\alpha}: \mathbb C \rightarrow U_{\alpha}$,  we have
$$t.u_{\alpha}(x)=\lambda(t)u_{\alpha}(x) \lambda(t)^{-1} = u_{\alpha}(t^{\langle \alpha, \lambda \rangle}x) $$
where  $t \in \mathbb C^*$ and $x \in \mathbb C$.

Let $A$ be a set consisting of roots.
In   coordinates $(x_{\alpha})_{\alpha \in A}$ on $\prod_{\alpha \in A} U_{\alpha} $, the $\mathbb C^*$-action induced by $\lambda$ on $\prod_{\alpha \in A} U_{\alpha}$ can be written as
$$t.(x_{\alpha})_{\alpha \in A} = (t^{\langle \alpha, \lambda \rangle}x_{\alpha})_{\alpha \in A}$$ for $t \in \mathbb C^*$ and $(x_{\alpha})_{\alpha \in A} \in \mathbb C^{|A|}$.\\

Let $P(\lambda)$ be the subgroup  of $G$ consisting of $g \in G$ such that $\lim_{t \rightarrow 0}t.g$ exists. Then, $P(\lambda)=P_I$ for some choice of a Borel subgroup of $G$ where  $I$ is the set of simple roots orthogonal to $\lambda$, and thus is a parabolic subgroup of $G$ (8.4.5 of \cite{Sp}). Conversely, any parabolic subgroup of $G$ is of the form $P(\lambda)$ for some cocharacter $\lambda$. Define $P(-\lambda)$ by the subgroup of $G$ consisting of $g \in G$ such that $\lim_{t \rightarrow \infty} t.g$  exists.   If $P(\lambda) =P_I$, then $P(-\lambda) = P_I^-$.

One   way  to understand the $P_I^{\pm}$-orbit decomposition on $S=G/P$   is as the ($\pm$)-decomposition for the associated $\mathbb C^*$-action $\lambda$ on $S$. Before  giving this relation, we state   Bialynicki-Birula decomposition theorem on an arbitrary projective manifold.
 For a fixed point $x  $ of a $\mathbb C^*$-action on a projective manifold $X$, noting that $T_xX$ is a $\mathbb C^*$-module, let $(T_xX)^{\pm}$ denote the $\mathbb C^*$-submodule of $T_xX$ spanned by    all vectors $v \in T_xX$ such that for $t \in \mathbb C^*$ we have $t.v=t^mv$ for some $m \in \mathbb Z_{\pm}$.

\begin{proposition} [Theorem 4.3  of \cite{BB}]  \label{BB decomposition}
Given a $\mathbb C^*$-action   on a nonsingular projective variety $X$ let $ \coprod_{i=1}^r  X_i$ be the decomposition of its fixed point set   into connected components.  Then   there   are   canonical decompositions $X=\coprod_{i=1}^r X_i^+$ and $X=\coprod_{i=1}^r X_i^-$ of $X$ into  locally closed nonsingular $\mathbb C^*$-invariant subvarieties and $\mathbb C^*$-fibrations $\gamma_i^+:X_i^+ \rightarrow X_i$ and $\gamma_i^-:X_i^- \rightarrow X_i$ for $i=1, \dots, r$ such that  the fixed point set of $X_i^{\pm}$ is $X_i$ and
 for $x \in X_i$, $T_{x}X_i^{\pm} =T_xX_i \oplus (T_xX)^{\pm}$
  for $i=1, \dots, r$.
Furthermore, such decompositions are unique.

\end{proposition}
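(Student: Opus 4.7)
The plan is to reduce to a local picture near each fixed component via Sumihiro's equivariant linearization, and then globalize by using projectivity to guarantee that $\mathbb{C}^*$-orbit limits exist as morphisms. First I would define the candidates set-theoretically: set $X_i^+ = \{x \in X : \lim_{t \to 0} t.x \in X_i\}$ and $X_i^- = \{x \in X : \lim_{t \to \infty} t.x \in X_i\}$. Since $X$ is projective, every orbit map $\mathbb{C}^* \to X$, $t \mapsto t.x$, extends uniquely to $\mathbb{P}^1 \to X$, so the two limits always exist and are fixed points; this immediately gives the set-theoretic decompositions $X = \coprod X_i^+ = \coprod X_i^-$. It also shows each $X_i$ is smooth, as a fixed locus of a reductive group action on a smooth variety.

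Next I would establish the local structure. By Sumihiro's theorem, each $x \in X_i$ has a $\mathbb{C}^*$-invariant affine open neighborhood that embeds $\mathbb{C}^*$-equivariantly into a representation $V$, and by shrinking I may identify $T_x X$ with $V$ via the differential at $x$. Weight-decompose $V = V^0 \oplus V^+ \oplus V^-$ according to the sign of the weights, so that $V^0 = T_xX_i$, $V^+ = (T_xX)^+$, $V^- = (T_xX)^-$. On this slice, a point has $\lim_{t \to 0} t.v \in X_i$ precisely when its $V^-$-component vanishes, so locally $X_i^+$ is cut out by the vanishing of the $V^-$-coordinates and is smooth of tangent space $V^0 \oplus V^+ = T_xX_i \oplus (T_xX)^+$. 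The analogous statement holds for $X_i^-$.

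To promote the retraction $\gamma_i^+(x) := \lim_{t \to 0} t.x$ to an actual morphism $X_i^+ \to X_i$ and to see it as a $\mathbb{C}^*$-fibration, I would consider the action map $\mathbb{C}^* \times X_i^+ \to X$ and extend it to a morphism $\mathbb{A}^1 \times X_i^+ \to X$: for each closed point $x \in X_i^+$ the restriction extends by the valuative criterion applied to $X$ projective, and a standard semicontinuity / flatness argument glues these pointwise extensions to an algebraic map whose restriction to $\{0\} \times X_i^+$ is $\gamma_i^+$. On each Sumihiro chart $\gamma_i^+$ is literally the linear projection $V^0 \oplus V^+ \to V^0$, i.e., a trivial $V^+$-bundle, and these local trivializations patch via the $\mathbb{C}^*$-equivariance to a Zariski locally trivial affine-space fibration. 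Finally, uniqueness follows because the attracting sets are intrinsically characterized: any $\mathbb{C}^*$-invariant locally closed smooth subvariety through $X_i$ with the prescribed tangent spaces must, by the implicit function theorem, coincide with $X_i^+$ on a neighborhood of $X_i$, and $\mathbb{C}^*$-invariance plus the limit description forces equality. The principal obstacle is the algebraicity of $\gamma_i^+$ globally — the set-theoretic limit map is easy, but showing it is a morphism (and the resulting bundle is Zariski, not merely analytically, locally trivial) is where projectivity and the Sumihiro patching must be used carefully.
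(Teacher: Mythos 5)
This is one of the preliminary propositions the paper cites but does not prove---it is stated with the attribution ``Theorem 4.3 of [BB]'' and no argument is given. So there is no in-paper proof to compare against; I can only assess your sketch on its own merits. Your route (limits exist by projectivity, Sumihiro linearization gives the local structure, the retraction is a morphism, uniqueness by local rigidity) is the standard \emph{modern} reconstruction of the Bialynicki-Birula decomposition, and is in fact different from Bialynicki-Birula's original 1973 argument, which predates Sumihiro's theorem and proceeds more directly via the structure of torus actions on smooth varieties. For the purposes of this paper, either route is acceptable background.

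The main gap in your sketch is in the local analysis. Sumihiro's theorem gives a $\mathbb{C}^*$-equivariant closed immersion $i : X \cap U \hookrightarrow V$ into a representation $V$, but you then write ``by shrinking I may identify $T_xX$ with $V$ via the differential at $x$,'' which is not available: $di_x$ identifies $T_xX$ with a \emph{subspace} of $V$, and in general $\dim X < \dim V$. Consequently $V^0, V^{\pm}$ are strictly larger than $(T_xX)^0, (T_xX)^{\pm}$, and it is not immediate from the weight decomposition of $V$ alone that $(X \cap U) \cap (V^0 \oplus V^+)$ is smooth at $x$ with tangent space $(T_xX)^0 \oplus (T_xX)^+$ -- this is precisely the nontrivial part. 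The clean fix is the ``Bialynicki-Birula lemma'' (or equivariant Luna slice): choose a $\mathbb{C}^*$-invariant complement to $T_xX$ in $V$ and compose $i$ with the resulting equivariant linear projection $V \to T_xX$; this composite is $\mathbb{C}^*$-equivariant and \'etale at $x$, so the linear picture in $T_xX$ pulls back to $X$. Once you have that \'etale chart, the identification of $X_i^{\pm}$ with the attracting/repelling subspaces, the smoothness, and the tangent space formula all follow, and your globalization and uniqueness steps go through. I would also trim the valuative-criterion paragraph: once the Sumihiro charts (plus the \'etale reduction) exhibit $\gamma_i^{\pm}$ locally as a linear projection, the map is already a morphism by gluing and the valuative-criterion detour is redundant; and the assertion that the local trivializations automatically ``patch to a Zariski locally trivial fibration'' is true but requires an additional argument that your equivariance remark does not supply -- though the statement as quoted only asserts a $\mathbb{C}^*$-fibration, so you may not need it.
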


The  decomposition $X=\coprod_{i=1}^r X_i^+$ (respectively,  $X=\coprod_{i=1}^r X_i^-$) is called the ($+$)-decomposition (respectively, the ($-$)-decomposition). For any $i=1, \dots, r$
\begin{eqnarray*}
X_i^+ &=&\{x \in X: \lim_{t \rightarrow 0}t.x \in X_i\}\\
X_i^- &=&\{x \in X: \lim_{t \rightarrow \infty}t.x \in X_i\}
\end{eqnarray*}
and $X_i^+$ (respectively, $X_i^-$) is the stable (respectively, unstable) subvariety of $X$ corresponding to $X_i$ (\cite{BB76}).

\begin{proposition} \label{BB decomposition for S}
Let $\lambda$ be a cocharacter of $T$ with $P(\lambda) = P_I$. The
 $(\pm)$-decomposition of $S=G/P$ under the $\mathbb C^*$-action induced by $\lambda$ is
$$S=\coprod_{[\sigma]\in   W_I \backslash    W^P} P_I^{\pm}.x_{\sigma} $$
 together with the projections $\gamma^{\pm}_{[\sigma]}:P_I^{\pm}. x_{\sigma} \rightarrow L_I.x_{\sigma}$.
Furthermore, there is a unique $[\sigma^+ ] \in   W_I \backslash   W^P$ such that $P_I^-.x_{\sigma^+}$ is closed. In this case, $P_I^+.x_{\sigma^+}$  is open in $S$ and it intersects $P_I^-.x_{\tau}$ nontrivially for any $\tau \in W^P$.
\end{proposition}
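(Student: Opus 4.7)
The plan is to identify the Bialynicki-Birula $(\pm)$-decomposition of $S$ with the $P_I^{\pm}$-orbit decomposition by verifying the defining properties in Proposition \ref{BB decomposition}, and then to extract the uniqueness and openness assertions from irreducibility of $S$ together with the fiber structure of the BB strata.

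First I would identify the $\lambda$-fixed components. Because $P(\lambda)=P_I$ with $I=\{\alpha\in\Phi:\langle\alpha,\lambda\rangle=0\}$, the centralizer of $\lambda(\mathbb{C}^*)$ in $G$ is exactly $L_I$; hence two $T$-fixed points $x_\sigma,x_\tau$ lie in the same connected $\lambda$-fixed component if and only if $L_I.x_\sigma=L_I.x_\tau$, i.e.\ if and only if $[\sigma]=[\tau]$ in $W_I\backslash W^P$. Each $L_I.x_\sigma$ is closed and connected by Proposition \ref{the base has positive dimension}(2), so the components of $S^\lambda$ are exactly $\{L_I.x_\sigma:[\sigma]\in W_I\backslash W^P\}$.

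Next I would verify that each $P_I^+.x_\sigma$ realizes the BB stratum $S_{[\sigma]}^+$. It is $\lambda$-invariant since $\lambda(\mathbb{C}^*)\subset L_I\subset P_I^+$, and locally closed as an algebraic orbit. Writing $p\in P_I^+$ as $p=\ell u$ with $\ell\in L_I$ and $u\in U_I^+$, one has $t.(p.x_\sigma)=\ell\,(\lambda(t)u\lambda(t)^{-1}).x_\sigma$, and since $\langle\alpha,\lambda\rangle>0$ for all $\alpha\in\Delta(U_I^+)$ the conjugate $\lambda(t)u\lambda(t)^{-1}$ tends to the identity as $t\to 0$. Thus $\lim_{t\to 0}t.(p.x_\sigma)=\ell.x_\sigma\in L_I.x_\sigma$, the $\lambda$-fixed set of $P_I^+.x_\sigma$ equals $L_I.x_\sigma$, and the assignment $\gamma^+_{[\sigma]}(p.x_\sigma):=\ell.x_\sigma$ coincides with $x\mapsto\lim_{t\to 0}t.x$. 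For the tangent-space condition, using $T_{x_\sigma}S\cong\bigoplus_{\alpha\in\sigma(\Delta(U_P^-))}\mathfrak{g}_\alpha$ and matching the weights of $\lambda$ against the image of $\mathfrak{p}_I^+=\mathfrak{l}_I\oplus\mathfrak{u}_I^+$ gives $T_{x_\sigma}(P_I^+.x_\sigma)=T_{x_\sigma}(L_I.x_\sigma)\oplus(T_{x_\sigma}S)^+$, so the uniqueness clause in Proposition \ref{BB decomposition} forces $P_I^+.x_\sigma=S_{[\sigma]}^+$. Applying the same argument to $\lambda^{-1}$, whose associated parabolic is $P_I^-$, yields $P_I^-.x_\sigma=S_{[\sigma]}^-$.

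For the remaining assertions, Proposition \ref{BB decomposition} shows that each $S_{[\sigma]}^-$ is an affine bundle over $L_I.x_\sigma$ with fiber $\mathbb{C}^k$, where $k=\dim(T_{x_\sigma}S)^-$. Since $S$ is projective but $\mathbb{C}^k$ is affine, $P_I^-.x_\sigma$ is closed in $S$ if and only if $k=0$, equivalently $\dim P_I^+.x_\sigma=\dim S$, i.e.\ $P_I^+.x_\sigma$ is open. Among the finitely many locally closed strata $\{P_I^+.x_\sigma\}$ covering the irreducible variety $S$, exactly one has dimension $\dim S$ and is therefore open, which gives the unique $[\sigma^+]$; since $P_I^+.x_{\sigma^+}$ is then open and dense in $S$, it meets every $P_I^-$-orbit. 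The one technical step requiring care is the tangent-space computation in the BB verification — essentially root-system bookkeeping twisted by $\mathrm{Ad}(\sigma)$; the rest reduces to the Levi decomposition of a parabolic and the uniqueness clause in Proposition \ref{BB decomposition}.
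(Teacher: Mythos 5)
Your identification of the $\lambda$-fixed components with the closed $L_I$-orbits, your verification that each $P_I^{\pm}.x_{\sigma}$ satisfies the defining conditions of Proposition \ref{BB decomposition}, and your argument that $P_I^-.x_{\sigma}$ is closed iff $P_I^+.x_{\sigma}$ is open (via the affine fiber dimension) are all sound and essentially parallel to the paper's reasoning. However, the final assertion — that $P_I^+.x_{\sigma^+}$ meets every $P_I^-$-orbit — does not follow merely from $P_I^+.x_{\sigma^+}$ being open and dense. An open dense subset of $S$ can certainly miss a proper locally closed subvariety (for instance, $\mathbb P^1\setminus\{0\}$ is open and dense but misses $\{0\}$), and a $P_I^-$-orbit could a priori be entirely contained in the complement $S\setminus P_I^+.x_{\sigma^+}$, which is a nonempty closed set whenever $S$ is not a single $P_I^+$-orbit.

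The paper closes this gap with a genuinely different ingredient: if $P_I^-.x_{\tau}$ were disjoint from $P_I^+.x_{\sigma^+}$, then it would lie in the $P_I^+$-invariant complement $Z$, so $P_I^+P_I^-.x_{\tau}\subset Z$; but $P_I^+P_I^-$ is an open subset of $G$ (the ``big cell'' relative to the two opposite parabolics), hence $P_I^+P_I^-.x_{\tau}$ is open in $S$, contradicting that $Z$ is proper. Alternatively, you could salvage your approach by invoking the orbit-closure structure: $L_I.x_{\sigma^+}$ is the unique closed $P_I^-$-orbit, hence lies in the closure of every $P_I^-$-orbit, and since $L_I.x_{\sigma^+}\subset P_I^+.x_{\sigma^+}$ (as $L_I\subset P_I^+$), the closure $\overline{P_I^-.x_{\tau}}$ meets the open set $P_I^+.x_{\sigma^+}$; density of $P_I^-.x_{\tau}$ in its own closure then forces $P_I^-.x_{\tau}\cap P_I^+.x_{\sigma^+}\neq\emptyset$. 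Either repair is needed; as written, the last sentence of your proof is an unjustified leap.
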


\begin{proof} The decompositions $S=\coprod_{[\sigma] \in W_I\backslash W^P} P_I^+.x_{\sigma}$ and $S=\coprod_{[\sigma] \in W_I\backslash W^P} P_I^-.x_{\sigma}$ and projections $\gamma_{[\sigma]}^+:P_I^+.x_{\sigma} \rightarrow L_I.x_{\sigma}$ and $\gamma_{[\sigma]}^-:P_I^-.x_{\sigma} \rightarrow L_I.x_{\sigma}$ satisfy the conditions in Proposition \ref{BB decomposition}.
The first statement follows from the uniqueness of the ($\pm$)-decomposition.

 If $P_I^-.x_{\sigma}$ is closed, then $U_I^-$ acts on $P_I^-.x_{\sigma}$ trivially and thus we have $P_I^-.x_{\sigma}=L_I.x_{\sigma}$   and   $\dim (T_xS)^-=0$ for $x \in L_I.x_{\sigma}$.
Therefore, $P_I^+.x_{\sigma}$ is open in $S$, and since the decomposition is locally closed, there is a unique $[\sigma^+] \in W_I\backslash W^P$ such that $P_I^+.x_{\sigma^+}$ is open (cf. Corollary 1 of \cite{BB}). Therefore, $P_I^-.x_{\sigma^+}$ is the unique closed $P_I^-$-orbit in $S$.

Suppose that $P_I^+.x_{\sigma^+} $ is disjoint from $P_I^-.x_{\tau}  $ for some $\tau \in W^P$. Then $P_I^-.x_{\tau}$ is contained in     the complement $Z$ of  $P_I^+.x_{\sigma^+}$ in $S$. Since $Z$ is $P_I^+$-invariant,  $P_I^+P_I^-.x_{\tau}$ is contained in $Z$.  However, $P_I^+P_I^-$ is open in $G$, and thus $P_I^+P_I^-.x_{\tau}$ is open in $S$, contradicting to the fact  that $Z$  is a proper subvariety of $S$. Therefore, $P_I^+.x_{\sigma^+}$ intersects $P_I^-.x_{\tau}$ for every $\tau \in W^P$.
\end{proof}

\begin{remark} For $\sigma, \tau \in W^P$,
$P_I^-.x_{\sigma} $ is in the closure of $P_I^-.x_{\tau}$ if and only if $P_I^+.x_{\sigma} \cap P_I^-.x_{\tau}$ is nonempty  (Corollary 1.2 of \cite{Deo}, Section 1.3 of \cite{Br}).
\end{remark}

\section{Transversality with respect to a $\mathbb C^*$-action}

Let $\mathcal O$ be a $P_I^-$-orbit  in $S=G/P$ and let $\overline{\mathcal O}$ be its closure in $S$. By the transversality of a general translate (\cite{K}), for any subvariety $Z$ of $S$ of complementary dimension, there is a translate $gZ$ of $Z$ by an element $g\in G$ which intersects $\overline{\mathcal O}$ transversely. Then the intersection multiplicity of $gZ$ and $\overline{\mathcal O}$   at each intersection point is one, so that we have $gZ \cap \overline{\mathcal O }=p_1 +\dots+ p_r$ where all $p_i$ are distinct.
Take the limit $\lim_{t \rightarrow \infty}t.p_i:=x_i$ for the $\mathbb C^*$-action $\lambda $ associated to $P_I^-$ (the one induced by a cocharacter $\lambda$ of $T$ such that $P(-\lambda)=P_I^-$). Then $x_i$'s are not necessarily distinct any more.
The goal of this section is to prove that there is a translate $ Z'$ of $Z$ such that
\begin{enumerate}
 \item $ Z'$ intersects $\mathcal O$ transversely and
  \item the limits $\lim_{t\rightarrow 0}t.p_i'$ are all distinct, where $p_1'+ \dots + p_r'= Z' \cap \mathcal O$
\end{enumerate}
(see Proposition \ref{existence of perturbation}).
\noindent
Let $\gamma:\mathcal O \rightarrow \mathcal H$ be the projection map to the fixed point set $\mathcal H$ in $\mathcal O$ of the $\mathbb C^*$-action $\lambda$.
Then the condition (2) is equivalent to the condition that   $\gamma_*( Z' \cap \mathcal O)=x_1 +\dots +x_r$  with all $x_i$ being distinct. \\

Because the complement of $\mathcal O$ in $\overline{\mathcal O}$ has dimension  less than $\dim \mathcal O$, we can find a translate of $Z$ satisfying the condition (1), if we apply the transversality Theorem of the following form.
For subvarieties  $Y, Z$  of   $S $, we say that $Y$ meets $Z$ {\it properly} if for each irreducible component $C$ of $Y \cap Z$, $\text{codim} (C) = \text{codim} Y + \text{codim} Z$.   
Recall that  as a linear algebraic group $G$ is an affine algebraic manifold.

\begin{proposition} [\cite{K}, Lemma 1.3.1 of \cite{Br}] \label{transversality Lemma} Let $Y, Z$ be subvarieties of a rational homogeneous manifold $S = G/P$.  Let $Y_0 \subset Y$ (resp. $Z_0 \subset Z$) be the nonsingular locus of $Y$ (resp. $Z$).  Then, there exists a nonempty Zariski open subset $W$ of $G$ such that for any group element $g$ belonging to $W$, $Y$ meets $gZ$ properly, and $Y_0 \cap gZ_0$ is nonsingular and dense in $Y \cap gZ$.  In particular, if $\dim(Y) + \dim(Z) = \dim(S)$, then $Y$ meets $gZ$ transversely for all $g$ belonging to some dense Zariski open subset of $G$.
\end{proposition}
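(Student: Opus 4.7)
The plan is to carry out Kleiman's classical transversality argument using the transitivity of $G$ on $S$. Consider the morphism
\[
\alpha \colon G \times Z_0 \longrightarrow S, \qquad (g,z) \longmapsto g \cdot z.
\]
Because $G$ acts transitively on $S$, the orbit map $G \to S$ is a smooth surjection, and it follows formally that $\alpha$ is smooth of relative dimension $\dim G - \dim S + \dim Z_0$. Hence the preimage $\mathcal V := \alpha^{-1}(Y_0) \subset G \times Z_0$ is a nonsingular subvariety of pure codimension $\operatorname{codim}_S Y$.

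First I would apply generic smoothness (valid in characteristic zero) to the first projection $\pi \colon \mathcal V \to G$. This yields a dense Zariski open $U_0 \subset G$ such that for every $g \in U_0$ the fiber $\pi^{-1}(g)$ is nonsingular of pure dimension $\dim Y_0 + \dim Z_0 - \dim S$. The translation $z \mapsto g \cdot z$ identifies $\pi^{-1}(g)$ with $Y_0 \cap gZ_0 \subset S$, which is therefore smooth of the expected dimension for $g \in U_0$.

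Next I would control the contribution of the singular loci by Noetherian induction on $\dim Y + \dim Z$. Applying the inductive conclusion to the pairs $(Y_{\mathrm{sing}}, Z)$ and $(Y, Z_{\mathrm{sing}})$ yields dense opens $U_1, U_2 \subset G$ such that for $g \in U_1 \cap U_2$ the intersections $Y_{\mathrm{sing}} \cap gZ$ and $Y \cap gZ_{\mathrm{sing}}$ meet properly, and hence have dimension strictly less than $\dim Y + \dim Z - \dim S$. Setting $W := U_0 \cap U_1 \cap U_2$, for $g \in W$ every irreducible component of $Y \cap gZ$ either meets $Y_0 \cap gZ_0$ in a nonempty open subset, giving it the expected dimension together with a dense nonsingular open, or is contained entirely in the lower-dimensional bad locus, which is excluded by the dimension bound just obtained. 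This yields the proper intersection and density statements. When $\dim Y + \dim Z = \dim S$ the expected dimension is zero, so $Y_0 \cap gZ_0$ is a finite reduced set equal to $Y \cap gZ$, and nonsingularity at each such point is precisely the transversality condition $T_p Y + T_p(gZ) = T_p S$.

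The main obstacle is the density claim: \emph{a priori} the singular strata could contribute components of full expected dimension to $Y \cap gZ$. This is dispensed with by the inductive application to $(Y_{\mathrm{sing}}, Z)$ and $(Y, Z_{\mathrm{sing}})$, where $Y_{\mathrm{sing}} \subsetneq Y$ and $Z_{\mathrm{sing}} \subsetneq Z$ are proper closed subvarieties of strictly smaller dimension, so that the induction terminates; everything else is a formal consequence of the smoothness of $\alpha$ together with the generic smoothness theorem.
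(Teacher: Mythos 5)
Your argument is correct and is precisely the classical Kleiman transversality argument that the paper's cited references ([K], Lemma~1.3.1 of [Br]) use; the paper itself supplies no proof beyond those citations. Your outline — smoothness of the action map $\alpha\colon G\times Z_0\to S$ (which does require a brief justification, e.g.\ via fiberwise description and miracle flatness, since $G\times Z_0$ is Cohen--Macaulay and $S$ is regular), generic smoothness of the projection $\mathcal V\to G$, Noetherian induction on $\dim Y+\dim Z$ to excise $Y_{\mathrm{sing}}$ and $Z_{\mathrm{sing}}$, and the intersection-dimension inequality in the smooth ambient $S$ to force every component of $Y\cap gZ$ to meet $Y_0\cap gZ_0$ — reproduces that proof faithfully, including the reduction of the transversality statement to reducedness of the zero-dimensional smooth intersection in the complementary-dimension case.
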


To get a translate of $Z$ satisfying the condition (2) we use a birational morphism $f: \widetilde {\mathcal O} \rightarrow \overline{\mathcal O}$ constructed in the following way. %
Let $\mathcal H$ be the fixed point set of the $\mathbb C^*$-action $\lambda$ in $\mathcal O$. Then $\mathcal H$ is an $L_I$-orbit. Take a base point $o$ of $\mathcal H$ and put $F$  to be the $U_I^-$-orbit of $o$.
 Then $\gamma: \mathcal O \rightarrow \mathcal H$ is the $L_I$-homogeneous fiber bundle over $\mathcal H$ with typical fiber $F$. Take the closure $\overline F$ of $F$ in $S$. Then the homogeneous fiber bundle $\widetilde {\mathcal O}$ over $\mathcal H$ with fiber $\overline F$ has $\mathcal O$ as a dense open subset. Define a map
 \begin{eqnarray*} 
\xymatrix{
\widetilde{\mathcal O}=L_I \times_{P_1} \overline{F} \ar[d]^{\widetilde{\gamma}}  \ar[r]^{\qquad \quad    f } & \overline{\mathcal O}\\
\mathcal H=L_I/P_1 &  }
\end{eqnarray*}
by $f([\ell, x])=\ell x$ for $\ell \in L_I$ and $x \in \overline{F}$, where $P_1$ is the isotropy group of $L_I$ at $o$. Then $f$ is a birational morphism.
  For $z \in \mathcal H$, the restriction of $f$ to the fiber $\widetilde{\gamma}^{-1}(z)$ of $\widetilde{\gamma}$ is a closed embedding into $S$. We denote the image $f(\widetilde{\gamma}^{-1}(z))$ by $\overline{F}_z$. Then $\overline {\mathcal O}$ is the union of $  \overline{F}_z$, where $z \in \mathcal H$. Thus, for $y \in \overline{\mathcal O}$, $f^{-1}(y)$ is biholomorphic to the subvariety of $\mathcal H$ consisting of points  $z \in \mathcal H$ with $y \in \overline{F}_z$. Therefore, $f$ is proper.

\begin{remark} The proper morphism $f:\widetilde {\mathcal O} \rightarrow \overline{\mathcal O}$  is a generalization of  a   collapsing of a homogeneous vector bundle   in   \cite{Ke}.

\end{remark}

Set $\mathcal E$ to be the subset $\{y \in \overline{\mathcal O}: \dim f^{-1}(y) >0\}$ of $\overline{\mathcal O}$. Since $\overline{\mathcal O}$ is a Schubert variety, it is normal, and thus the restriction of $f$ to $\widetilde {\mathcal O} - f^{-1}(\mathcal E)$ is a biholomorphism onto $\overline{\mathcal O} -\mathcal E$.  Therefore, $f^{-1}(\mathcal E)$ is the exceptional locus of $f$ and $\mathcal E$ is the image $f(\mathrm{Exc}(f))$ of the exceptional locus of $f$.


\begin{lemma} \label{pushing out}  Let $\overline {F}$ and  $\mathcal E$ be as in the  above.
  For any $x \in \overline F \cap \mathcal E$ and $y \in F$,
there is a vector field $X$ of $TS$ such that $X(x)=0$ and $X(y) \not=0 \in  T_{y}(L_I.y)$.
\end{lemma}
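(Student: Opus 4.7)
The plan is to realize $X$ as the fundamental vector field $\xi^{*}$ on $S$ coming from some $\xi\in \mathfrak g$, where $\xi^{*}(p)=\tfrac{d}{ds}\big|_{s=0}\exp(s\xi)\cdot p$. To force $X(y)\in T_y(L_I\cdot y)=\mathfrak l_I\cdot y$, I decompose $\xi=\eta+\zeta$ with $\eta\in \mathfrak l_I$ and $\zeta\in \mathfrak g^{y}$; then $X(y)=\eta^{*}(y)$ lies in $T_y(L_I\cdot y)$ automatically and is nonzero exactly when $\eta\notin \mathfrak l_I^{y}$. The vanishing condition $X(x)=0$ unpacks to $\eta^{*}(x)=-\zeta^{*}(x)$, equivalently $\eta^{*}(x)\in T_x(G^{y}\cdot x)=\mathfrak g^{y}\cdot x$. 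Thus the task reduces to exhibiting $\eta\in \mathfrak l_I \setminus \mathfrak l_I^{y}$ with $\eta^{*}(x)\in \mathfrak g^{y}\cdot x$.

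A candidate $\eta$ will come from the hypothesis $x\in\mathcal{E}$. Since $f\colon\widetilde{\mathcal O}\to\overline{\mathcal O}$ is $L_I$-equivariant, $\mathcal E$ is $L_I$-stable, and after an $L_I$-translation one may assume that a positive-dimensional component of $f^{-1}(x)$ passes through $[e,x]\in \widetilde{\mathcal O}$. Computing the differential
\[
df_{[e,x]}\colon\; \mathfrak l_I/\mathfrak p_1\oplus T_x\overline F\;\longrightarrow\; T_x\overline{\mathcal O}, \qquad (\eta+\mathfrak p_1,\,v)\;\longmapsto\; \eta^{*}(x)+v,
\]
its positive-dimensional kernel produces $\eta\in \mathfrak l_I\setminus \mathfrak p_1$ with $\eta^{*}(x)\in T_x\overline F$.

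Two things remain to verify: (a) $\eta\notin \mathfrak l_I^{y}$, and (b) $\eta^{*}(x)\in \mathfrak g^{y}\cdot x$. For (a), since $\mathfrak l_I^{y}=\mathfrak l_I\cap u\mathfrak g^{o} u^{-1}$ (with $y=u\cdot o$, $u\in U_I^{-}$) is a small deformation of $\mathfrak p_1=\mathfrak l_I\cap \mathfrak g^{o}$, the positive-dimensional freedom in the kernel of $df_{[e,x]}$ permits us to adjust $\eta$ so that $\eta\notin \mathfrak l_I^{y}$. The main obstacle is (b), namely the inclusion $T_x\overline F\subseteq \mathfrak g^{y}\cdot x$: since $\overline F$ is the closure of the $U_I^{-}$-orbit through $o$, its Zariski tangent $T_x\overline F$ is controlled by $\mathfrak u_I^{-}\cdot x$, so the inclusion reduces to $\mathfrak u_I^{-}\subseteq \mathfrak g^{y}+\mathfrak g^{x}$. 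This step requires the greatest care: it is handled by a root-theoretic decomposition using the $\lambda$-weight decomposition of $\mathfrak g$ together with the conjugation by $u\in U_I^{-}$ that twists $\mathfrak g^{o}$ into $\mathfrak g^{y}$, writing each root vector of $\mathfrak u_I^{-}$ as a sum of an element of $\mathfrak g^{y}$ and one vanishing at $x$, and thereby producing the compensating $\zeta\in \mathfrak g^{y}$.
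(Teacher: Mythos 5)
Your overall strategy---realizing $X$ as a fundamental vector field from $\mathfrak l_I$ and exploiting the positive dimension of $f^{-1}(x)$---is in the right spirit, but the route you take introduces an unnecessary correction term $\zeta$ and then leaves the two resulting obligations unproved, one of which is a genuine obstruction. The step you yourself flag as ``requiring the greatest care,'' the inclusion $T_x\overline F\subseteq \mathfrak g^y\cdot x$ (equivalently $\mathfrak u_I^-\subseteq \mathfrak g^y+\mathfrak g^x$), is merely asserted and never established by the ``root-theoretic decomposition'' you gesture at; there is no reason given why it should hold, and without it the existence of $\zeta$ compensating $\eta^*(x)$ is not secured. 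There is also a smaller but real gap earlier: you claim that ``after an $L_I$-translation one may assume that a positive-dimensional component of $f^{-1}(x)$ passes through $[e,x]$,'' but translating by $\ell\in L_I$ replaces $x$ by $\ell x$, so this normalization is not available for a fixed $x$. Finally, item (a) ($\eta\notin\mathfrak l_I^y$) is handwaved; it can in fact be made clean by observing $\mathfrak l_I^y\subseteq\mathfrak p_1$ via $\gamma$-equivariance (any $\ell\in L_I$ fixing $y$ fixes $\gamma(y)=o$), but you do not make this observation.

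The decisive simplification, which removes the problematic step (b) entirely, is to interpret $\dim f^{-1}(x)>0$ not through the kernel of $df$ at $[e,x]$ (which only gives $\eta\in\mathfrak l_I$ with $\eta^*(x)\in T_x\overline F$, hence requires the $\zeta$-correction), but through the stronger equivalent statement that $P_x\cdot o$ has positive dimension, where $P_x$ is the isotropy subgroup of $L_I$ at $x$. That gives a one-parameter subgroup $(\ell_\epsilon)\subset P_x\subset L_I$ with $\frac{d}{d\epsilon}\big|_{\epsilon=0}\ell_\epsilon\cdot o\neq 0$; since $\gamma\colon\mathcal O\to\mathcal H$ is $L_I$-equivariant and $\gamma(y)=o$, $(\ell_\epsilon)$ also moves $y$. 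Its fundamental vector field $X$ then vanishes at $x$ (because $\ell_\epsilon$ fixes $x$), is nonzero at $y$, and is tangent to $L_I\cdot y$---no decomposition $\xi=\eta+\zeta$ and no inclusion $T_x\overline F\subseteq\mathfrak g^y\cdot x$ are needed. In short, you worked with a weaker consequence of $x\in\mathcal E$ than is available, and the price was claims (a) and (b), the second of which you cannot currently discharge.
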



\begin{proof} For $x \in \overline{F} $, let $P_x$ be the isotropy group of $L_I$ at $x$. Then $x$ is contained in $\mathcal E$ if and only if $P_x.o$ has positive dimension,  because  $\mathcal E$ consists of $x \in \overline{\mathcal O}$ such that $ \dim f^{-1}(x) >0$.  Thus for $x \in \mathcal E \cap \overline F$, there is  a one parameter subgroup $(\ell_{\epsilon})$ of $P_x$ such that $\frac{d}{d\epsilon}|_{\epsilon=0} \ell_{\epsilon}.o \not=0$. Furthermore, for any subgroup $P'$ of $L_I$ and for any $y \in F$,  $P'.o$ has positive dimension if and only if $P'.y$ has positive dimension. Therefore, for $x \in \mathcal E \cap \overline F$ and $y \in F$, there is a one parameter subgroup $(\ell_{\epsilon})$ of $P_x$ such  that $\frac{d}{d\epsilon}|_{\epsilon=0} \ell_{\epsilon}.y \not=0$.
\end{proof}

\begin{proposition} \label{existence of perturbation}  Let $S=G/P$ be a rational homogeneous manifold     and let $\mathcal O$ be a $P_I^-$-orbit in $S$.    Assume that   the exceptional locus   of $f:\widetilde {\mathcal O} \rightarrow \overline{\mathcal O}$ associated to $\mathcal O$ is not empty.
Let $Z$ be a  reduced irreducible subvariety of $S$ of dimension $\dim S -\dim \mathcal O$ with nonzero intersection number with $\overline{\mathcal O}$.  Then there exists  $g \in G$ such that
\begin{enumerate}
\item $gZ$ intersects $\mathcal O$ transversely and
\item $\gamma_*(gZ\cap \mathcal O)=  x_1 + \dots +  x_r$ with all $x_i$ being distinct,
\end{enumerate}
\noindent
    where $\gamma:\mathcal O \rightarrow \mathcal H$ is the projection map to the fixed point set $\mathcal H$
\end{proposition}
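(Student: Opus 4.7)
The plan is to secure condition (1) by Kleiman's transversality theorem and condition (2) by an incidence-variety dimension count whose crucial non-triviality is furnished by Lemma \ref{pushing out}.

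\textit{Step 1 (Transversality).} By applying Proposition \ref{transversality Lemma} to $Z$ against $\overline{\mathcal{O}}$ together with each lower-dimensional stratum of $\overline{\mathcal{O}} \setminus \mathcal{O}$, one obtains a Zariski open $T_1 \subset G$ on which $gZ$ meets $\mathcal{O}$ transversely at precisely $r$ points, where $r$ is the intersection number. This settles (1).

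\textit{Step 2 (Dimension count via incidence).} For (2), consider the coincidence variety
$$
\mathcal{P} = \{(g, p_1, p_2) \in T_1 \times \mathcal{O}^2 : p_1 \neq p_2,\ p_1, p_2 \in gZ,\ \gamma(p_1) = \gamma(p_2)\}.
$$
Since (2) is a Zariski open condition on $g \in T_1$ (the reduced $0$-cycle locus is open in $\text{Sym}^r \mathcal{H}$), it suffices to show $\mathcal{P} \to T_1$ is not surjective, i.e., $\dim \mathcal{P} < \dim G$. Reparameterizing via $(g, p_1, p_2) \mapsto (g, g^{-1}p_1, g^{-1}p_2)$ identifies $\mathcal{P}$ with a subvariety of $G \times Z^2$ cut out by $(gq_1, gq_2) \in M := \mathcal{O} \times_{\mathcal{H}} \mathcal{O} \subset S \times S$. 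Since $\mathrm{codim}_{S^2} M = 2\dim Z + \dim \mathcal{H}$ and $\dim \mathcal{H} > 0$ (for if $\mathcal{H}$ were a point, $L_I$ would fix $o$, forcing $\overline{\mathcal{O}} = \overline{F}$ and $\mathcal{E} = \emptyset$), a fiber-by-fiber count over $Z^2 \setminus \Delta_Z$ gives $\dim \mathcal{P} \leq \dim G - \dim \mathcal{H} < \dim G$ --- provided the $G$-orbit $G \cdot (q_1, q_2)$ meets $M$ in the expected codimension for generic $(q_1, q_2) \in Z^2 \setminus \Delta_Z$.

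\textit{Main obstacle.} The crux is establishing this proper intersection, which does \emph{not} follow from Kleiman alone since the diagonal $G$-action on $S \times S$ is not transitive and one could in principle have $G \cdot (q_1, q_2) \subset \overline{M}$. This is precisely where Lemma \ref{pushing out} and the Bialynicki-Birula analysis (Proposition \ref{BB decomposition for S}) enter. For $x \in \overline{F} \cap \mathcal{E}$, the lemma produces a one-parameter subgroup of $L_I$ stabilizing $x$ whose generator $\eta$ satisfies $\eta(y) \in T_y(L_I \cdot y)$ nonzero for any $y \in F$; moreover $d\gamma(\eta(y)) \neq 0$, because the subgroup is not contained in $P_1 = \mathrm{Stab}_{L_I}(o)$ (as $x \in \mathcal{E}$ forces $P_x \not\subset P_1$), so the motion crosses $\gamma$-fibers. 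Applying this one-parameter group to a pair lying in a common fiber $\overline{F}_z$, and combining it with translations by elements of $G \setminus P_I^-$ (which genuinely mix the $P_I^\pm$-orbits appearing in the BB decomposition), shows $G \cdot (q_1, q_2) \not\subset \overline{M}$ for any $q_1 \neq q_2$. A standard upper semi-continuity argument then promotes this pointwise statement to the generic proper intersection needed for the dimension count. The chief difficulty lies in using the BB decomposition to track how pairs in distinct $P_I^\pm$-orbits can be swept together or apart by the $L_I$-action, ensuring that the flexibility provided by Lemma \ref{pushing out} is enough to escape $\overline{M}$ uniformly over a dense open subset of $Z \times Z$.
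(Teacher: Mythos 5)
Your proposal is a genuinely different route from the paper's: you attempt a single global dimension count via the incidence variety
\[
\mathcal{P} = \{(g, p_1, p_2) : p_1 \neq p_2,\ p_1, p_2 \in gZ,\ \gamma(p_1) = \gamma(p_2)\},
\]
whereas the paper runs a local, iterated perturbation argument. In the paper one first moves $p_1$ by an element of $U_I^-$ so that its $\mathbb{C}^*$-limit $p_0 = \lim_{t\to 0} t.p_1$ lands in $\mathcal{E}$ (this step is the content of Lemma~\ref{key lemma}, which uses the Bialynicki--Birula decomposition and the fact that the unique closed $P_I^-$-orbit is contained in $\mathcal{E}$ and is approached from every $P_I^-$-orbit); then Lemma~\ref{pushing out} supplies a vector field vanishing at $p_0$ and pushing $t.p_2$ outside $T(t.Z) + TF$; one then lets $t\to 0$, flows by the resulting one-parameter subgroup, and repeats this to decrement the multiplicities $n_k$ one at a time. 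Your dimension count is an appealing alternative in spirit, but as written it does not close.

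The gap is in the proper-intersection step, which you correctly flag as the crux but do not actually establish. To bound $\dim\mathcal{P}$ by $\dim G - \dim\mathcal{H}$ via the projection $\mathcal{P}\to Z^2\setminus\Delta_Z$, you need $\dim\bigl(M\cap G\cdot(q_1,q_2)\bigr)\le \dim G\cdot(q_1,q_2) - (2\dim Z + \dim\mathcal{H})$, and this bound has to hold uniformly enough (stratum by stratum in $Z^2\setminus\Delta_Z$, or at least for the generic point of the image of each irreducible component of $\mathcal{P}$) since the fiber dimension of $\mathcal{P}\to Z^2\setminus\Delta_Z$ can jump. What your Lemma~\ref{pushing out}/BB argument actually shows --- a nonvanishing $d\gamma(\eta(y))$ for a suitable one-parameter subgroup --- gives one transversal direction, hence at best $G\cdot(q_1,q_2)\not\subset\overline{M}$, i.e.\ codimension $\ge 1$, not codimension $\ge 2\dim Z + \dim\mathcal{H}$. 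The appeal to ``upper semi-continuity'' does not promote non-containment to the full expected codimension; semi-continuity goes in the other direction, controlling the generic fiber from a special one, and in any case does not manufacture the missing $2\dim Z + \dim\mathcal{H} - 1$ transversal directions. Moreover, since $S\times S$ is not homogeneous under the diagonal $G$-action, Kleiman's theorem gives no help here either, so the proper-intersection input would have to come from somewhere else entirely. Without it, the count does not give $\dim\mathcal{P} < \dim G$; it merely restates the statement to be proved.

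Note also that even if you did obtain $G\cdot(q_1,q_2)\not\subset\overline{M}$, plugging this into your fiber computation only yields $\dim\mathcal{P}\le \dim G - 1 + 2\dim Z$, which is useless. So the single non-degenerate direction provided by Lemma~\ref{pushing out} cannot, by itself, drive a global dimension count of this kind; this is precisely why the paper instead flows along the one-parameter subgroup to reduce multiplicities one intersection point at a time.
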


\begin{proof}  Without loss of generality, up to the action of $G$, we may assume that  $Z$ intersects $\overline{\mathcal O}$ transversely and the intersection $Z \cap \overline {\mathcal O} $ is contained in $\mathcal O$ (Proposition \ref{transversality Lemma}).
 Then $Z$ intersects $\mathcal O$ at $r$ distinct points. If $\mathcal O$ is closed, then $\mathcal O=\mathcal H$  and  there is nothing to prove. From now one we assume that $\mathcal O$ is not closed and thus $\gamma:\mathcal O \rightarrow \mathcal H$ has positive fiber dimension.
Let $F$ be the $U_I^-$-orbit of the base point $o$ of $\mathcal H$  and $\mathcal E$ be the image $f(\mathrm{Exc}(f))$ of the exceptional locus of $f$ as in Lemma \ref{pushing out}.

Write $\gamma_*(Z \cap \mathcal O) =n_1x_1 + \dots + n_sx_s$ (where all $x_i$ are distinct). Put $m_0 =0$ and $m_k =m_{k-1}+n_k$ for $k=1, \dots, s$. We order points $p_1, \dots, p_r$ in $Z \cap \mathcal O$ in such a way that $p_i$ lies over $x_k$ for  $m_{k-1}+1 \leq i \leq m_k$, $k=1, \dots, s$. Take a neighborhood $\mathcal U_k$ of $x_k$ in $\mathcal H$ for $1 \leq k \leq s$ such that $\mathcal U_k \cap \mathcal U_l = \emptyset$ for any $1 \leq k \not=l \leq s$.
For $m_{k-1} +1 \leq i \leq m_k$, take a neighborhood $\mathcal B_i$ of $p_i$ such that $\gamma(\mathcal B_i) \subset \mathcal U_k$ for any $m_{k-1}+1 \leq i \leq m_k$ and $ \mathcal B_{i}  \cap    \mathcal B_{j} =\emptyset$  for any $m_{k-1} +1 \leq i\not=j \leq m_k$.

   Let $\{g_{\epsilon}\}$ be a one parameter subgroup of automorphisms of $S$.  Then, for sufficiently small $\epsilon$,  the intersection  $g_{\epsilon}Z \cap \mathcal O$ is contained in the union $\coprod _{i=1}^r \mathcal B_i$.  We order the points $p_{1,\epsilon}, p_{2, \epsilon}, \dots, p_{r, \epsilon}$ in the intersection $g_{\epsilon}Z \cap \mathcal O$    in such a way that $p_{i, \epsilon} \in \mathcal B_i$ for all $i=1, \dots, r$. By the construction of $\mathcal B_i$, $p_{i, \epsilon}$ and $p_{j, \epsilon}$ lie in   different fibers if $p_i$ and $p_j$ lie in  different fibers. Therefore, it suffices to show that for any two points $p_i$ and $p_j$ in the same fiber, there is $g_{\epsilon}$ such that $p_{i, \epsilon}$ and $p_{j, \epsilon}$ lie in  different fibers.

If $p_i$ were in   $\mathcal E$, then, by Lemma \ref{pushing out}, there would be a one-parameter subgroup $(\ell_{\epsilon})$ of $L_I$ such that $\ell_{\epsilon}$ fixes $p_i$ and $\ell_{\epsilon}$ sends $p_j$ to a different fiber. However, our $p_i$ is not in $\mathcal E$ and we cannot apply this lemma directly. \\

If $n_k=1$ for all $k=1, \dots, s$, then there is nothing to prove. Suppose that $n_1 >1$.
We will use the same notations as in Lemma \ref{pushing out}.  Take two points $p_1, p_2 $ in $(Z \cap \overline{\mathcal O}) \cap \gamma^{-1}(x_1)$. We may assume that $p_1,p_2$ lie in $F$.
Consider the limit $p_0:=\lim_{t\rightarrow 0} t.p_1  $, where  the action of $t \in \mathbb C^*$ is given by the $\mathbb C^*$-action $\lambda$ associated to $P_I^-$.

Assume that  $p_0 $ belongs to $ \mathcal E$. Then,
by Lemma \ref{pushing out}, for each $t \in \mathbb C^*$, there is a vector field $X$ of $TS$  such that $X(p_0)=0$ and $X(t.p_{2}) \not \in T_{t.p_{2}}t.Z  + T_{t.p_{2}}F$. Thus, for sufficiently small $t$, there is a vector field $X'$ of $TS$ such that $X'(t.p_1)=0$ and $X'(t.p_{2}) \not \in T_{t.p_{2}}t.Z + T_{t.p_{2}}F$.
By taking such a $t \in \mathbb C^*$ and by replacing $Z$ by $t.Z$, we may assume that there is a vector field $Y$ of $TS$ such that $Y(p_1)=0$ and $Y(p_2) \not \in T_{p_2}Z + T_{p_2}F$.

 Let $\{g_{\epsilon}\}_{\epsilon \in \mathbb C}$ denote the one parameter subgroup of automorphisms of $S$ associated to $Y$.  Let $p_{1,\epsilon}, p_{2, \epsilon}, \dots, p_{r, \epsilon}$ be points in the intersection $g_{\epsilon}Z \cap \mathcal O$, ordered in such a way as at the beginning of the proof.
Then, $p_{1, \epsilon} =p_1 \in F$ for all $\epsilon$.   We will show that $p_{2, \epsilon} \not \in F$ for some $\epsilon$.

Suppose that $p_{2, \epsilon}  \in F$ for all $\epsilon$. Then $\frac{d}{d\epsilon}|_{\epsilon =0} p_{2, \epsilon}    \in T_{p_{2}}F$.  Write $p_{2,\epsilon}=g_{\epsilon}z_{\epsilon}$, where $z_{\epsilon} \in Z$. Since $g_{0}=$ the identity and $p_{2, 0}=p_{2}$, we have $z_{0} = p_{2}$. From
$$\frac{d}{d\epsilon}|_{\epsilon =0} p_{2, \epsilon}    = \frac{d}{d\epsilon}|_{\epsilon =0}g_{\epsilon}p_{2} + \frac{d}{d\epsilon}|_{\epsilon =0} z_{\epsilon} \in T_{p_{2}}F$$
 and  $\frac{d}{d\epsilon}|_{\epsilon =0} z_{\epsilon} \in T_{p_{2}}Z$,
  it follows that we have $Y(p_2)=\frac{d}{d\epsilon}|_{\epsilon =0}g_{\epsilon}p_{2}  \in  T_{p_{2}}Z + T_{p_{2}}F  $, which contradicts to the assumption $Y(p_{2}) \not \in T_{p_{2}}Z + T_{p_{2}}F$. Therefore, $p_{2, \epsilon} \not \in F$ for some $\epsilon$. \\

If   $\mathcal E$ is equal to the  boundary of $\mathcal O$, then any limit $p_0=\lim_{t \rightarrow 0}t.p_1$ lies in $\mathcal E$ and by the arguments in the above there is $g_{\epsilon} \in G$ such that $p_{1, \epsilon}$ and $p_{2, \epsilon}$ lie in different fibers. This is the case if, for example, $\overline{\mathcal O}$ is an odd symplectic Grassmannian embedded into a symplectic Grassmannian.
However,   $\mathcal E$ is not necessarily equal to the boundary of $\mathcal O$.

 \begin{lemma} \label{key lemma} Assume that 
 $\mathcal E$ is nonempty.
Then the set $\mathcal E^+ _F$ of points $p \in F$ such that the limit $\lim_{t \rightarrow 0}t.p $ is contained in $\mathcal E $ is nonempty.

 \end{lemma}

 \begin{proof}
 The $P^{\pm}_I$-orbit decomposition of $S=G/P$  is given by $$S=\coprod_{[\sigma] \in W_I \backslash   W^P} \mathcal O^{\pm}_{\sigma}$$
 where
 $\mathcal O^{\pm}_{\sigma}$ is $P^{\pm}_I.x_{\sigma}$. Our $P_I^-$-orbit $\mathcal O$ is   $\mathcal O^-_w$ for some $w \in   W^P$.
Let $\Sigma:=\mathcal O_{\sigma }^-$ be the unique closed  $P_I^-$-orbit in $S$ (Proposition \ref{BB decomposition for S}).
 Since $\Sigma$ is closed, it is in fact equal to $\mathcal H_{\sigma }$, the fixed point set in $\mathcal O_{\sigma}^-$.
 Thus any point in $\mathcal O^+_{\sigma }$ has limit in $\Sigma=\mathcal H_{\sigma}$ as $t$ goes to $0$.  By Proposition \ref{BB decomposition for S}, $\mathcal O^+_{\sigma }$ has nonempty intersection with $\mathcal O^-_{w}$. Since $\mathcal O^+_{\sigma} \cap \mathcal O^-_w$ is invariant under the action of $L_I$, $\mathcal O^+_{\sigma}$ has nonempty intersection with $F$, too.

 Since  $\mathcal E$   is closed and   invariant under the action of $P_I^-$, and    is nonempty,   the unique closed $P_I^-$-orbit $\Sigma$ is contained in $\mathcal E$ and hence $\mathcal E_F^+$ is nonempty.
 \end{proof}

By Lemma \ref{key lemma}, there is an element $u$ of $U_I^-$ such that $u.p_1$ belongs to $\mathcal E^+_F $.  By replacing $Z$ by $u.Z$, we may assume that $p_1$ has the property that the limit $p_0:=\lim_{t\rightarrow 0} t.p_1  $ belongs to $ \mathcal E$.
The proof goes in the same line as before, so that we get $g_{\epsilon} \in G$ such that $p_{1, \epsilon}$ and $p_{2,\epsilon}$ lie in different fibers. \\

   Now the number of intersection points in $g_{\epsilon}Z \cap \mathcal O$ lying over the fiber $F$ is less than $n_1$. Repeat this process until we get $g \in G$ such that any two points in $gZ \cap \mathcal O$ lie in different fibers of $\gamma$.
\end{proof}

\begin{remark} \label{two remark after prop 3.3}

(1) The closure $\overline{\mathcal O}$ of a $P_I^-$-orbit in $S=G/P$ is a Schubert variety, and  if $S$ has Picard number one, then $\overline {\mathcal O}$ has Picard number one, too (\cite{Ma}, \cite{Br}), and therefore, the exceptional locus  of $f$ is always nonempty (See Proposition \ref{exceptional locus is nonempty}).

(2) Applying Proposition \ref{transversality Lemma} to $Z$ and Borel group orbits in the complement of the open Borel orbit in $\mathcal O$, we may assume that $Z$ intersects only the open $B^-$-orbit in $\mathcal O$ at the beginning of the proof. In the rest of the proof we can still maintain that  the intersection points are contained in the open Borel group orbit, so that there is a translate of $Z$ satisfying conditions (1) - (3) in Proposition \ref{degeneration}.
\end{remark}

\begin{proposition} \label{exceptional locus is nonempty} Let $S=G/P$ be the rational homogeneous manifold of Picard number one and let $T(w)$ be an opposite Schubert variety of $S$ so that the stabilizer of $T(w)$ is $P_I^-$. Let $f:\widetilde {\mathcal O} \rightarrow \overline{\mathcal O}$ be the birational morphism associated with the open $P_I^-$-orbit $\mathcal O$ in $T(w)$. If $P_I^-$ has more than one orbits in $T(w)$, then the exceptional locus  of $f$ is nonempty.
\end{proposition}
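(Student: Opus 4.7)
The plan is to argue by contradiction via a Picard number comparison. Suppose the exceptional locus of $f$ is empty. Then $f:\widetilde{\mathcal O}\to\overline{\mathcal O}=T(w)$ is a proper birational morphism with empty exceptional locus onto a normal variety (Schubert varieties being normal), so Zariski's main theorem forces $f$ to be an isomorphism. By Remark \ref{two remark after prop 3.3}(1) we have $\rho(\widetilde{\mathcal O})=\rho(T(w))=1$.

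To contradict this, I will exhibit two linearly independent classes in $N^{1}(\widetilde{\mathcal O})_{\mathbb Q}$ arising from the fibration $\widetilde\gamma:\widetilde{\mathcal O}=L_I\times_{P_1}\overline F\to\mathcal H=L_I/P_1$. Take $L_1:=f^{\ast}\bigl(\mathcal O_{\mathbb P^{N}}(1)|_{T(w)}\bigr)$, the pullback of the hyperplane class from the projective embedding of $S$, and $L_2:=\widetilde\gamma^{\ast}H$ for an ample class $H$ on $\mathcal H$. Restricted to any fiber $\widetilde\gamma^{-1}(z)\cong\overline F_z$, the class $L_2$ is trivial while $L_1|_{\overline F_z}=\mathcal O_{\mathbb P^{N}}(1)|_{\overline F_z}$ is of positive degree on every curve inside $\overline F_z$. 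Hence $L_1$ and $L_2$ are numerically independent as long as both $\overline F$ and $\mathcal H$ are of positive dimension, forcing $\rho(\widetilde{\mathcal O})\ge 2$, a contradiction.

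Both positive-dimensionality statements follow from the hypothesis that $P_{I}^{-}$ has more than one orbit on $T(w)$. First, $\dim\overline F>0$: were $F=U_{I}^{-}\cdot o$ a point, then $U_{I}^{-}$ would fix $o$, making $\mathcal O=P_{I}^{-}\cdot o=L_I\cdot o=\mathcal H$ closed in $T(w)$, so $T(w)=\overline{\mathcal O}$ would be a single $P_{I}^{-}$-orbit. Second, $\dim\mathcal H=\dim L_I\cdot x_w>0$: the hypothesis excludes $T(w)=\{x_w\}$, so $w$ is not the longest element $w_{0}^{P}$ of $W^{P}$, and the opposite-Schubert analog of Proposition \ref{the base has positive dimension}(2) furnishes a simple root $\alpha$ with $w^{-1}(\alpha)\in\Delta(U_{P})$. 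Then $\alpha\in I$, so $U_{-\alpha}\subset L_I$, and $U_{-\alpha}\cdot x_w=w\cdot U_{-w^{-1}(\alpha)}\cdot x_{0}\neq\{x_w\}$, because $-w^{-1}(\alpha)\in\Delta(U_{P}^{-})$ does not lie in $\Delta(P)$ and hence $U_{-w^{-1}(\alpha)}$ does not fix $x_{0}$.

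The main technical obstacle is the opposite-Schubert analog of Proposition \ref{the base has positive dimension}(2) just invoked, i.e.\ producing a simple root $\alpha$ with $w^{-1}(\alpha)\in\Delta(U_{P})$ whenever $w\in W^{P}$ is not $w_{0}^{P}$. This is the combinatorial mirror of the simple descent argument used for $S(w)$, exploiting that $w\in W^{P}$ maps $\Delta^{+}\cap\mathbb Z I_{P}$ into $\Delta^{+}$; once it is in place, the Picard number comparison completes the proof.
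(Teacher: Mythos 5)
Your proposal is correct and follows essentially the same route as the paper: normality of $T(w)$ forces $f$ to be an isomorphism if its exceptional locus is empty, then one compares $\rho(T(w))=1$ (by Mathieu) with $\rho(\widetilde{\mathcal O})\geq 2$, the latter coming from the fibration $\widetilde\gamma$ once both $\mathcal H$ and $\overline F$ are shown to be positive-dimensional. You simply make the paper's ``nontrivial fiber bundle $\Rightarrow$ Picard number $\geq 2$'' step explicit by exhibiting two numerically independent classes, and spell out the opposite-Schubert analogue of Proposition~\ref{the base has positive dimension}(2), which the paper invokes with the phrase ``by the same arguments.''
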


\begin{proof}
The open $P_I^-$-orbit $\mathcal O$ is an $L_I$-homogeneous  fiber bundle over the fixed point set  $\mathcal H$ in $\mathcal O$.
By the same arguments as in the proof of Proposition \ref{the base has positive dimension},   $\mathcal H$ is of positive dimension.  If  $P_I^-$ has more than one orbit in $T(w)$, then   $\mathcal O$ is a nontrivial fiber bundle over   $\mathcal H$ and so is $\widetilde {\mathcal O}$. Hence $\widetilde{\mathcal O}$ has Picard number $\geq 2$.
 If $S$ has Picard number one, then so does $T(w)$ (\cite{Ma}).
 Since $\mathcal O$ is an open $P_I^-$-orbit in $T(w)$, its closure $\overline{\mathcal O}$ is $T(w)$, and thus $\overline{\mathcal O}$ has Picard number one.

 Suppose that the exceptional locus   of $f$ is empty. Then the proper birational morphism $f:\widetilde {\mathcal O} \rightarrow \overline{\mathcal O} $ is a biholomorphism because $\overline{\mathcal O}$ is normal. Thus $\widetilde {\mathcal O}$ and $\overline{\mathcal O}$ have the same Picard number, a contradiction.
Therefore,  the exceptional locus  of $f$ is nonempty.
\end{proof}

\begin{proposition} \label{degeneration} Let $S=G/P$ be the rational homogeneous manifold  and let $T(w)$ be an opposite Schubert variety of $S$ whose stabilizer in $G$ is $P_I^-$. Let $Z \subset S$ be a   reduced irreducible subvariety  of $S$ having homology class $[Z ] = r[S(w)]$ and
\begin{enumerate}
\item $Z $ intersects $ T(w)$ transversely and
\item $Z \cap T(w)$ is contained in $B^-.x_w$ and
\item $\gamma_*(Z  \cap T(w))=x_1 + \dots +x_r$
 with all $x_i$ being distinct,
\end{enumerate}
where  $\gamma$ is the projection map from $\mathcal O:=P_I^-.x_w $ to $ \mathcal H:=L_I.x_w$.
Consider the $\mathbb C^*$-action associated with $P_I^-$.
Let $\mathcal Z \subset S \times \mathbb  P^1$ be the closure of the union $\mathcal Z^0:=\bigcup_{t \in \mathbb C}t.Z \times \{t\}$ in $S \times \mathbb P^1$ and $\pi:\mathcal Z \rightarrow \mathbb P^1$ be the restriction of the second projection map $S \times \mathbb P^1 \rightarrow \mathbb P^1$.
 Write $\pi^{-1}(\infty)=Z_{\infty} \times \{\infty\}$.  Then $\mathcal Z$ is irreducible and $Z_{\infty} $ is $g_1S_0 + \dots + g_rS_0$, where $g_i \in G$ is such that $x_i=g_i.x_w$ for $i=1, \dots, r$.

\end{proposition}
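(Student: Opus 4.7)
The plan is to first establish that $\pi : \mathcal{Z} \to \mathbb{P}^1$ is a well-behaved flat family so that $[Z_\infty]$ is preserved, then to carry out a local analysis at each $x_i$ showing that the irreducible component of $Z_\infty$ through $x_i$ is precisely $g_i S(w)$ with multiplicity one, and finally to combine these local identifications using the extremality of the ray $\mathbb{R}_{\geq 0}[S(w)]$ in the effective cone.

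For irreducibility, $\mathcal{Z}^0 = \bigcup_{t \in \mathbb{C}^*} t.Z \times \{t\}$ is the image of the irreducible variety $Z \times \mathbb{C}^*$ under the morphism $(z,t) \mapsto (t.z, t)$, hence irreducible, so its closure $\mathcal{Z}$ is irreducible. The surjection $\pi$ from the irreducible $\mathcal{Z}$ to the smooth curve $\mathbb{P}^1$ is then flat, so $Z_\infty$ is an equidimensional cycle and $[Z_\infty] = [Z] = r[S(w)]$, where $d := \dim S(w)$.

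For the local analysis, fix $i$ and a representative $g_i \in L_I$ with $g_i x_w = x_i$, which is possible because $\mathcal{H} = L_I.x_w$. Since $g_i$ centralizes the cocharacter $\lambda$, the translate $g_i S(w)$ is $\lambda$-invariant. Under $\lambda$ the tangent space splits $T_{x_i}S = V_i^- \oplus V_i^0 \oplus V_i^+$ with $V_i^0 = T_{x_i}\mathcal{H}$, and from Proposition \ref{BB decomposition for S} the identity $\dim T(w) = \dim \mathcal{H} + \dim V_i^-$ yields $\dim V_i^+ = d$. A central claim is $T_{x_w}S(w) = V_w^+$: the inclusion $T_{x_w}S(w) = \sum_{\alpha \in \Delta(w^{-1})} \frak g_\alpha \subset V_w^+ \oplus V_w^0$ is immediate from nonnegativity of the $\lambda$-weights on $T_{x_w}S(w)$, while the classical fact that the complementary-dimensional opposite Schubert varieties $S(w)$ and $T(w)$ meet transversely in the single point $x_w$, together with $\mathcal{H} \subset T(w)$, gives $\mathcal{H} \cap S(w) = \{x_w\}$ set-theoretically; hence $T_{x_w}S(w) \cap V_w^0 = T_{x_w}(S(w) \cap \mathcal{H}) = 0$, and a dimension count forces $T_{x_w}S(w) = V_w^+$. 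Translating yields $T_{x_i} g_i S(w) = V_i^+$. Now in $\lambda$-equivariant linearizing coordinates on a neighborhood of $x_i$ identified with an open subset of $V_i^- \oplus V_i^0 \oplus V_i^+$, the smoothness of $g_i S(w)$ at $x_i$, its $\lambda$-invariance, and its tangent $V_i^+$ force $g_i S(w)$ to coincide with the linear subspace $V_i^+$ locally: any $\lambda$-equivariant analytic graph $\psi: V_i^+ \to V_i^- \oplus V_i^0$ with $\psi(0) = 0$ and $d\psi(0) = 0$ satisfies the functional equation $\psi(u) = t^{-1}\psi(tu)$ for all $t$ (up to weights), and letting $t \to \infty$ forces $\psi \equiv 0$. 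On the other hand, parametrize $Z$ near $p_i$ as a graph $\phi : V_i^+ \to S$, which is possible because $T_{p_i}Z \oplus T_{p_i}T(w) = T_{p_i}S$ and $T(w)$ is locally tangent to $V_i^- \oplus V_i^0$, so $T_{p_i}Z$ projects isomorphically onto $V_i^+$. A direct computation with the $\lambda$-action, after the change of variables $u' = t \cdot (\text{projection of } \phi(u) \text{ onto }V_i^+)$, shows the $V_i^-$ and $V_i^0$ components of $t.\phi(u)$ are $O(t^{-1})$ while the $V_i^+$ component is $u'$, so $\lim_{t \to \infty} t.\phi(u) = (0, 0, u')$ traces out $V_i^+$ locally at $x_i$. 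Therefore $Z_\infty$ contains a neighborhood of $x_i$ in $g_i S(w)$, and by irreducibility of $g_i S(w)$ and closedness of $Z_\infty$ we conclude $g_i S(w) \subset Z_\infty$.

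To finish, the translates $g_1 S(w), \ldots, g_r S(w)$ are pairwise distinct: if $g_i S(w) = g_j S(w)$, then $g_j^{-1}g_i$ lies in the stabilizer of $S(w)$, which is a parabolic by Proposition \ref{the base has positive dimension}(1), whence $g_j^{-1} x_i = g_j^{-1}g_i x_w \in S(w) \cap \mathcal{H} = \{x_w\}$, forcing $x_i = x_j$. Each $g_i S(w)$ is therefore an irreducible component of $Z_\infty$ of dimension $d$ appearing with multiplicity at least one; by extremality of $\mathbb{R}_{\geq 0}[S(w)]$ in the effective cone (Introduction), every irreducible component of $Z_\infty$ has class a positive integer multiple of $[S(w)]$, and the cycle-class identity $[Z_\infty] = r[S(w)]$ then forces $Z_\infty = g_1 S(w) + \cdots + g_r S(w)$ with each summand of multiplicity one and no further components. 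The main obstacle is the local identification: specifically the weight-theoretic equality $T_{x_w}S(w) = V_w^+$, which exploits the transverse intersection $S(w) \cap T(w) = \{x_w\}$, together with the rigidity statement that a $\lambda$-invariant smooth germ through $x_i$ with tangent $V_i^+$ must equal $V_i^+$ in linearizing coordinates; once these are in place, matching with $\lim_{t \to \infty} t.Z$ becomes a routine graph computation.
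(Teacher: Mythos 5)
Your proof is correct and follows essentially the same route as the paper's: both pass to a $\lambda$-linearization near the fixed point(s), express $Z$ locally as a graph over the positive-weight directions, compute $\lim_{t\to\infty}t.Z$ by a scaling change of variables, recognize the limit as (an open piece of) $g_iS(w)$, and then close the argument globally by preservation of the cycle class under the degeneration and irreducibility of $\mathcal Z$. The main difference is cosmetic: the paper works directly in the big-cell coordinates $w(U_P^-).x_w$ centered at $x_w$, where the identification of $\{(z_\alpha,z_{\beta,0},0)\}$ with the open cell of $g S(w)$ is built into the coordinate description, whereas you recenter at $x_i$ and re-derive the tangent-space identity $T_{x_w}S(w)=V_w^+$ from the transversality $T_{x_w}S(w)\cap T_{x_w}T(w)=0$ and the BB decomposition of $T_{x_w}S$. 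That re-derivation is worthwhile and correct, though two small imprecisions are worth tightening: (i) the step ``$T_{x_w}S(w)\cap V_w^0 = T_{x_w}(S(w)\cap\mathcal H)$'' presupposes clean intersection and is an unneeded detour --- go directly from $T_{x_w}S(w)\cap T_{x_w}T(w)=0$ and $T_{x_w}\mathcal H\subset T_{x_w}T(w)$; and (ii) the phrase ``a dimension count forces $T_{x_w}S(w)=V_w^+$'' needs in addition that $T_{x_w}S(w)$ is a $\lambda$-invariant (hence weight-graded) subspace of $V_w^0\oplus V_w^+$; otherwise a $d$-dimensional complement of $V_w^0$ need not equal $V_w^+$. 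Likewise your functional-equation rigidity for the graph $\psi$ should be stated weight-by-weight rather than with a single uniform $t^{-1}$. None of these affect the validity; they are the same local computation the paper carries out explicitly with the $y_\alpha=t^{n_\alpha}z_\alpha$ substitution.
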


\begin{remark}
It may hold under a weaker condition (2)' than the condition (2), that $Z \cap T(w)$ is contained in $\mathcal O$. For simplicity of the proof, we put the condition (2) instead of (2)'. As we remark after Proposition \ref{existence of perturbation}, this  will not cause any problem in later use.
\end{remark}

\begin{proof}
We will first consider the closure of $\mathcal Z^0 \cap (\mathcal U \times \mathbb P^1)$, where $\mathcal U:=w(U^-_P).x_w $  is the open big cell  in $S$ containing $x_w$.
Write $w(U^-_P)= \prod_{\alpha \in A}U_{\alpha}$ for a set $A$ consisting of roots.
Then
$$w(U^-_P)=\left(\prod_{\alpha \in A^+} U_{\alpha}\right)\left(\prod_{\beta \in A^0} U_{\beta} \right)\left(\prod_{\gamma \in A^-} U_{\gamma}\right),$$  where
\begin{eqnarray*}
A^+&=&\{\alpha \in A : \langle \alpha, \lambda \rangle >0 \}\\
A^0&=&\{\alpha \in A : \langle \alpha, \lambda \rangle =0 \}\\
A^-&=&\{\alpha \in A : \langle \alpha, \lambda \rangle <0 \}
\end{eqnarray*}
Choose coordinates $(z_{\alpha}, z_{\beta}, z_{\gamma})_{ \alpha \in A^+,\beta \in A^0,\gamma \in A^-}$ on $\mathcal U\simeq w(U^-_P)$ keeping this order.
Putting $n_{\alpha}=\langle \alpha, \lambda \rangle $ for $\alpha \in A^+$ and $n_{\gamma} = -\langle \gamma, \lambda \rangle$   for $\gamma \in A^-$,
we get that the $\mathbb C^*$-action induced by $\lambda$ can be expressed as:
$$t.(z_{\alpha}, z_{\beta}, z_{\gamma}) =(t^{n_{\alpha} }z_{\alpha}, z_{\beta}, t^{-n_{\gamma} }z_{\gamma})$$
for $t \in \mathbb C^*$.  The closure of $\{(0, z_{\beta}, z_{\gamma}): z_{\beta} \in \mathbb C^{|A^0|}, z_{\gamma} \in \mathbb C^{|A^-|}\}$ in $S$ is $T(w)$.  For each fixed $z_{\beta, 0} \in \mathbb C^{|A^0|}$, the closure of $\{(z_{\alpha}, z_{\beta,0}, 0): z_{\alpha} \in \mathbb C^{|A^+|}\}$ in $S$ is $gS(w)$, where $g \in L_I$ is such that $g.x_w$ has coordinates $(0, z_{\beta,0}, 0)$.

Let $Z \subset X$ be a subvariety satisfying the assumptions (1) -- (3). Write  $ Z \cap T(w)=\{p_1, \dots, p_r\}$. By (1) and (2), $p_i$ are all distinct and are contained in $\mathcal U$. Let $(p_{i, \alpha}, p_{i,\beta}, p_{i,\gamma})$ be the coordinates of $p_i$ for $i=1, \dots, r$. We will show that for each $p_i$ there is a neighborhood $V_i$ of $p_i$ in $Z$ such that $t.V_i$ converges to  $\{(z_{\alpha}, p_{i,\beta}, 0): z_{\alpha} \in \mathbb C^{|A^+|} \}$ as $t \rightarrow \infty$.

Fix $i$.
By (1) and (2), there is a neighborhood $V$ of $p_i$  in $Z \cap \mathcal U$ of the form
$$V=\left\{(z_{\alpha},  G_{\beta}(z_{\alpha}), H_{\gamma}(z_{\alpha})): (z_{\alpha})_{\alpha \in A^+ } \in V' \right\}$$
where  $V '$ is an open ball centered at $0$ in $\mathbb C^{|A^+ |}$, and $ G=(G_{\beta}),H=(H_{\gamma})$ are holomorphic maps defined on $V'$   with values in $\mathbb C^{|A^0|}$, $\mathbb C^{|A^-|}$ such that   $G_{\beta}(0)=p_{i,\beta}$.

Now
$$t.V=\left\{(t^{n_{\alpha}}z_{\alpha},   G_{\beta}(z_{\alpha}), t^{-n_{\gamma}}H_{\gamma}(z_{\alpha})): (z_{\alpha})_{\alpha \in A^+ } \in V' \right\}$$
for $t \in \mathbb C^*$. Put $y_{\alpha}= t^{n_{\alpha}}z_{\alpha}$. Then
$$t.V=\left\{( y_{\alpha},  G_{\beta}(t^{-n_{\alpha}}y_{\alpha}), t^{-n_{\gamma}}H_{\gamma}(t^{-n_{\alpha}}y_{\alpha})): (y_{\alpha})_{\alpha \in A^+ } \in t.V' \right\}.$$
As $t \rightarrow \infty$, $G_{\beta}(t^{-n_{\alpha}}y_{\alpha})$ tends to $G_{\beta}(0)$ and $t^{-n_{\gamma}}H_{\gamma}(t^{-n_{\alpha}}y_{\alpha})$ tends to zero, so that $t.V$ converges to
$$ \left\{( y_{\alpha},  p_{i,\beta}, 0): (y_{\alpha})_{\alpha \in A^+ } \in \mathbb C^{|A^+|}\right\}, $$
which is an open dense subset of  $g_iS(w)$.

   Consequently, the support of $Z_{\infty}$ contains the support of the cycle $g_1S(w) + \dots + g_rS(w)$.
  Since $\mathcal Z^0$ is irreducible, so is $\mathcal Z$. Thus $Z_{\infty}$ has the same homology class as $Z$ (Section 1.6 of \cite{Fu}). Therefore, $Z_{\infty}$ is equal to $g_1S(w) + \dots + g_rS(w)$.
\end{proof}

\section{Rigidity}

We review the geometric theory of uniruled projective manifolds modeled on varieties of minimal rational tangents   and its application to the homological rigidity of smooth Schubert varieties (\cite{HoM}, for an expository survey see \cite{Mk16}) and then we generalize the method to prove  the Schur rigidity.

\subsection{Varieties of minimal rational tangents}
Let $(X, \mathcal L)$ be a polarized projective manifold. A (parameterized) rational curve on $X$ is a nonconstant holomorphic map $f: \mathbb P^1 \rightarrow X$.
 A rational curve $f$ is said to be {\it free} if    the pull-back bundle $f^*TX$ on $\mathbb P^1$ is semipositive. A free rational curve $f$ such that the degree $f^*\mathcal L$ is minimum among all free rational curves is called a {\it minimal rational curve}.
  Let $\mathcal H$ be a connected component of $\text{Hom}(\mathbb P^1, X)$ containing a minimal rational curve and let $\mathcal H^0$  be the subset consisting of free rational curves. Then the quotient space $\mathcal K=\mathcal H^{0}/\Aut(\mathbb P^1)$ of (unparameterized) minimal rational curves  is  called
    a {\it minimal rational component}.

Fix a minimal rational component $\mathcal K$. By a minimal rational curve we implicitly  mean a minimal rational curve belonging to $\mathcal K$. For a general $x \in X$ the space $\mathcal K_x$ of minimal rational curves passing through $x$ is a projective manifold. Define a rational map from $\mathcal K_x$ to $\mathbb P(T_xX)$ by associating  a minimal rational curve immersed at $x$ to the tangent line at $x$, which is called the tangent map. The strict transformation $\mathcal C_x(X)$ of the tangent map is called the {\it variety of minimal rational tangents of} $X$ {\it at} $x$. The union of $\mathcal C_x(X)$ over general $x \in X$ forms a fibered space $\mathcal C(X)$ over $X$.

Let $S=G/P$ be a rational homogeneous manifold associated to a simple root. Then the Picard number of $S$ is one and the ample generator of the Picard group defines a  $G$-equivariant embedding of $S$ into a projective space $\mathbb P^N$. Lines $\mathbb P^1$ lying on $S$ are minimal rational curves, and we will choose the family $\mathcal K$ of lines lying on $S$ as our minimal rational component, so that the variety $\mathcal C_x(S)$ of minimal rational tangents of $S$ at any $x$  in $S$ is defined by the space of all tangent directions of lines lying on $S$ passing through $x$. If $S$ is associated to a long root, then $G$ acts on $\mathcal K$  transitively.  If $S$ is associated to a short root, then $\mathcal K$ has two $G$-orbits. In any case, by a general line we mean a line corresponding to  a point in   the open $G$-orbit  in $\mathcal K$, and by a general point in $\mathcal C_x(S)$ we mean the tangent direction of a general line.

Let $Z$ be an irreducible (possibly singular) subvariety of $S$ uniruled by lines contained in the smooth locus of $Z$.
By a general point in $Z$ we mean a point $z$ in the smooth locus of $Z$ such that    there is a line passing through $z$ contained in the smooth locus of $Z$ and the deformation of any line passing through $z$  covers an open dense subset of $Z$.    By the variety $\mathcal C_x(Z)$ of minimal rational tangents of $Z$ at a general $x \in Z$ we mean the variety of minimal rational tangents associated to  the family $\mathcal K_Z$  of lines lying on $Z$.

   A smooth Schubert variety $S_0$ of $S$ is uniruled by lines lying on $S_0$.  By a general point of $S_0$ we mean a point in the open orbit in $S_0$ of the stabilizer of $S_0$ in $G$. When there is a general line contained in $S_0$,  by  a general point in $\mathcal C_x(S_0)$ at a general point $x \in S_0$ we mean the tangent direction of a general line passing through $x$.
For  descriptions of $\mathcal C_x(S_0)$ see \cite{HoM}.

\subsection{Local characterizations} \label{sect: local characterizations}
Let $S=G/P$ be a rational homogeneous manifold associated to a   simple root, and $S_0 \subset S$ be a smooth Schubert variety.  Consider the following two conditions:
 \begin{enumerate}
 \item [(I)] at a general point $\alpha \in \mathcal C_x(S_0)$, for any $h \in P_x$ sufficiently
close to the identity element $e\in P_x$ and satisfying
$T_{\alpha}\left(h\mathcal C_x(S_0)\right) =
T_{\alpha}\left(\mathcal C_x(S_0)\right)$ we must have $h\mathcal C_x(S_0) = \mathcal C_x(S_0)$;
 \item [(II)] any local deformation    of $\, \mathcal C_x( S_0)$ in $\mathcal C_x( S )$ is induced by the action of $P_x$,
 \end{enumerate}
 where  $P_x$ is   the isotropy of $G$ at a general point  $x \in S_0$.

\begin{proposition} [Proposition 3.2 and Proposition 3.6 of \cite{HoM}]  \label{general case - inductive step}
Let $S=G/P$ be a rational homogeneous manifold associated to a   simple root, and $S_0 \subset S$
be a smooth Schubert variety.
Assume that   $\mathcal C_x(S_0)$ satisfies {\rm(I)} and {\rm(II)} at a general point $x \in S_0$.
Then,  the following holds true.
  \begin{enumerate}
 \item[\rm (1)]
    If  a  smooth subvariety  $Z$ of $S$ is uniruled by lines of $S$ lying on $Z$ and  contains $x$ as a general point with  $\mathcal C_x(Z) = \mathcal C_x( S_0)$, then $S_0$ is contained in $Z$.
  \item[\rm (2)]
Any local deformation of $S_0$ in $S$ is induced by the action of $G$.
 \end{enumerate}

\end{proposition}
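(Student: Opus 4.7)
The strategy is to use the sub-VMRT structure on $Z$ (respectively, on a deformation of $S_0$) and to propagate local agreement with $S_0$ along chains of minimal rational curves using conditions (I) and (II). This is the standard template of Cartan--Fubini style extension in the theory of Hwang--Mok, adapted to pairs as in \cite{HoM}.

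\emph{Proof of (1).} I plan to extend the equality $\mathcal C_x(Z) = \mathcal C_x(S_0)$ from the single base point $x$ to all of $S_0$ by propagation along lines. First, because the tangent map $\mathcal K_x \hookrightarrow \mathbb P(T_xS)$ is an embedding for the rational homogeneous manifold $S$ of Picard number one, two lines through $x$ in $S$ that share a tangent direction coincide; hence the hypothesis forces the full family of lines in $S_0$ through $x$ to lie in $Z$, and vice versa. Next, fix such a general line $\ell$ and a general point $y \in \ell$, general with respect to both $S_0$ and $Z$. Deformations of $\ell$ inside $Z$ (respectively, inside $S_0$) through $y$ produce, after passing to tangent directions, irreducible components of $\mathcal C_y(Z)$ (respectively, of $\mathcal C_y(S_0)$). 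Using Kodaira stability of minimal rational curves applied to the smooth subvariety $Z$, I expect to show that the components coming from $Z$ locally contain those coming from $S_0$ near the tangent direction of $\ell$. Condition (II) at $y$ then forces $\mathcal C_y(Z) = h\,\mathcal C_y(S_0)$ for some $h \in P_y$ near the identity, and condition (I) at $y$, applied at a general smooth point of $\mathcal C_y(S_0)$ where the tangent spaces already agree, upgrades this to $\mathcal C_y(Z) = \mathcal C_y(S_0)$.

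Having equality at every general $y$ on every line through $x$ in $S_0$, I would iterate: the locus of $y \in S_0$ at which $\mathcal C_y(Z) = \mathcal C_y(S_0)$, and hence every line of $S_0$ through $y$ lies in $Z$, is non-empty, constructible, and closed under passing to a general point on any line of $S_0$ emanating from it. Because $S_0$ is an irreducible uniruled projective manifold and any two general points can be joined by a chain of lines of $S_0$, this locus is dense in $S_0$; taking closures yields $S_0 \subset Z$.

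\emph{Proof of (2).} Given a local deformation $\{S_t\}_{t \in \Delta}$ with $S_{t_0} = S_0$, each nearby $S_t$ is smooth and uniruled by lines. Pick a general $x \in S_0$, together with a holomorphic section $x_t \in S_t$ extending $x$ and a holomorphic lift $g_t \in G$ with $g_{t_0} = e$ and $g_t x = x_t$. Then $g_t^{-1}S_t$ contains $x$, and $\mathcal C_x(g_t^{-1}S_t)$ is a holomorphic deformation of $\mathcal C_x(S_0)$ inside $\mathcal C_x(S)$. By condition (II), this deformation is induced by $P_x$, so there is a holomorphic family $h_t \in P_x$ with $h_{t_0}=e$ and $\mathcal C_x(g_t^{-1}S_t) = h_t\mathcal C_x(S_0)$. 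Replacing $g_t$ by $g_t h_t$, I may assume $\mathcal C_x(g_t^{-1}S_t) = \mathcal C_x(S_0)$. Now apply part (1) to the smooth subvariety $g_t^{-1}S_t$: it must contain $S_0$, and by equality of dimension and homology class, $g_t^{-1}S_t = S_0$, so $S_t = g_t S_0$, exhibiting the deformation as induced by the $G$-action.

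\emph{Main obstacle.} The delicate point is the propagation step of (1): making precise how deformations of $\ell$ inside the non-homogeneous $Z$ feed into $\mathcal C_y(Z)$, producing enough of $\mathcal C_y(S_0)$ to invoke (II), and then exploiting (I) to pass from an equality up to $P_y$ to strict equality. Managing the several layers of genericity (of $x$ in $S_0$, of $\ell$ in the family of lines, of $y$ on $\ell$, and of $\alpha$ in $\mathcal C_y(S_0)$ for the application of (I)) and verifying that the propagated locus is closed under chains of lines covering all of $S_0$ is where the bulk of the technical care will go.
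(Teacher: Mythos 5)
The paper itself does not prove this proposition; it cites Propositions 3.2 and 3.6 of \cite{HoM}, so the comparison is with that reference. Your proposal has the right architecture and is essentially the strategy used there: for (1), Cartan--Fubini type propagation of the VMRT equality along chains of lines in $S_0$, using (II) to put $\mathcal C_y(Z)$ into the $P_y$-orbit of $\mathcal C_y(S_0)$ and (I) to upgrade tangency to equality; for (2), normalize the family by (II) and reduce to (1). You correctly flag the propagation step as the crux. Where I would push back is on the specific invocation of Kodaira stability there: Kodaira's stability theorem is about persistence of a compact submanifold when the \emph{ambient} complex manifold deforms (it is used in exactly that role in Proposition \ref{characterization-modification}(2) of this paper), and it does not produce a containment between the cone of lines of $Z$ through $y$ and the cone of lines of $S_0$ through $y$. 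What is actually needed at $y$ is: (a) that $y\mapsto \mathcal C_y(Z)$ is holomorphic along $\ell$ near $x$ (which holds because $\ell$ is a standard minimal rational curve of $Z$ and $y$ remains a general point of $Z$), so that, after a $G$-trivialization identifying each $\mathcal C_y(S)$ with $\mathcal C_x(S)$, $\mathcal C_y(Z)$ becomes a local deformation of $\mathcal C_x(Z)=\mathcal C_x(S_0)$, whence (II) gives $\mathcal C_y(Z)=h_y\mathcal C_y(S_0)$ with $h_y\in P_y$ near the identity; and (b) that $T_{[T_y\ell]}\mathcal C_y(Z)=T_{[T_y\ell]}\mathcal C_y(S_0)$, which follows because both are the fibers at $y$ of the positive $\mathcal O(1)$-summand of $N_{\ell/Z}$ resp.\ $N_{\ell/S_0}$ inside $N_{\ell/S}$, these positive parts are sub-bundles determined by their fiber at a single point, and they agree at $x$ by hypothesis. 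With (a) and (b), condition (I) at the general point $[T_y\ell]$ forces $h_y\mathcal C_y(S_0)=\mathcal C_y(S_0)$, and the rest of your argument (the chain-connectedness and the reduction of (2) to (1)) goes through as you describe.
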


 Proposition \ref{general case - inductive step} (2), together with Proposition 2.2 of \cite{HoM}, implies that (I) and (II) are  sufficient conditions for a smooth Schubert variety to be homologically rigid.

\begin{theorem} [\cite{HoM}] \label{local rigidity theorem}
Let $S=G/P$ be a rational homogeneous manifold associated to a   simple root, and $S_0 \subset S$ be a smooth Schubert variety.
Assume that   $\mathcal C_x(S_0)$ satisfies {\rm(I)} and {\rm(II)} at a general point $x \in S_0$.
 Then any subvariety $Z \subset S$ having homology class $[Z] =   [S_0]$  is $gS_0$ for some $g \in G$.
\end{theorem}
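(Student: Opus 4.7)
The plan is to reduce Theorem \ref{local rigidity theorem} to Proposition \ref{general case - inductive step}, which requires us to show two things at a general smooth point $z$ of $Z$: that $Z$ is uniruled by lines of $S$ lying in its smooth locus, and that after translating by some $g \in G$ we have an equality $\mathcal C_{gz}(gZ) = \mathcal C_{gz}(S_0)$ at a general point of $gZ$. Since the ray $\mathbb R_{\geq 0}[S_0]$ is extremal in the cone of effective cycles of $S$, any $Z$ with $[Z] = [S_0]$ is automatically irreducible and reduced, and the question becomes purely local in character.

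First I would produce a flat family over a disc whose generic fiber is a translate $g_tS_0$ of $S_0$ and whose special fiber is $Z$; this is available because the space of effective cycles on $S$ of the class $[S_0]$ is proper, and a general $G$-translate of $S_0$ is a smooth member of that Chow component. Applying the Kodaira stability theorem to the lines of $S$ sweeping out $g_tS_0$, I would deform these lines along the family to obtain an ample supply of lines of $S$ lying on $Z$ through a general smooth point, entirely contained in the smooth locus of $Z$. Passing to limits of tangent directions of these lines then yields the variety of minimal rational tangents $\mathcal C_z(Z) \subset \mathcal C_z(S)$ at a general smooth point $z$, realized as a limit of $\mathcal C_{g_t z_t}(g_tS_0)$ under translation to a common basepoint, and hence as a small deformation of $\mathcal C_{x}(S_0) \subset \mathcal C_{x}(S)$ for a suitable base point $x$ under some element of $G$ carrying $x$ to $z$.

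At this stage hypothesis (II) forces the deformation of $\mathcal C_x(S_0)$ inside $\mathcal C_x(S)$ to be induced by the isotropy $P_x$; combining this with the $G$-translation above produces some $g \in G$ such that $\mathcal C_{gz}(gZ) = \mathcal C_{gz}(S_0)$ at a general point $gz$ that may be assumed to lie in $S_0$. Proposition \ref{general case - inductive step}(1) then yields $S_0 \subset gZ$, and comparing homology classes and dimensions (both irreducible, same dimension, $[gZ]=[S_0]$) forces $gZ = S_0$, so $Z = g^{-1}S_0$ as desired.

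The main obstacle is the first step: guaranteeing that $\mathcal C_z(Z)$ is even well-defined and genuinely a deformation of $\mathcal C_x(S_0)$. Without at least one minimal rational curve passing through a general point of $Z$ and entirely contained in the smooth locus of $Z$, the VMRT machinery does not apply and Proposition \ref{general case - inductive step} is inaccessible. This is precisely why smoothness of $S_0$ is used essentially (so that lines on $S_0$ meet its smooth locus freely), and it is the reason the authors stress, in the remark preceding Section 2, that the method breaks down for singular $S_0$. Once this step is secured, condition (I) serves to rigidify $\mathcal C_z(Z)$ against spurious continuous ambiguity, while (II) closes the argument by identifying the deformation with a translate, and the rest is formal.
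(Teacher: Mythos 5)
The central difficulty with your proposal is that the degeneration runs in the wrong direction, and this is not a cosmetic issue but one on which the whole argument hinges.

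You propose to build a flat family over a disc with generic fiber a translate $g_tS_0$ and with special fiber $Z$, and then to apply Kodaira stability to deform lines from the generic fibers to $Z$. There are two problems. First, the existence of such a family is not established by the properness of the Chow variety: properness gives you limits of families, but it does not place $Z$ in the closure of the $G$-orbit of $[S_0]$ in the cycle space. In fact the assertion that $Z$ is a specialization of translates of $S_0$ is essentially the theorem itself, rephrased, so you cannot assume it at the outset. Second, and independently, Kodaira's stability theorem propagates submanifolds from a fixed (smooth) fiber to nearby fibers; it does not allow you to push lines from the generic fiber into the special fiber of a degeneration. Lines on $g_tS_0$ do limit, by properness of the space of degree-one cycles, to curves on $Z$, but a priori these limits can be non-reduced, reducible, or contained in the singular locus of $Z$, and none of the machinery underlying conditions (I), (II), or Proposition \ref{general case - inductive step} survives that. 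Your final paragraph correctly senses that there is an obstruction here, but misattributes it: the issue is not merely that $Z$ may lack free lines, it is that the direction of your degeneration cannot produce them.

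The argument in \cite{HoM} (cited as Proposition 2.2 there, and echoed in the present paper by the pair Proposition \ref{existence of perturbation} and Proposition \ref{degeneration}) runs the other way: one first \emph{degenerates} $Z$ to a translate $gS_0$, using the $\mathbb C^*$-action attached to the stabilizer $P_I^-$ of the opposite Schubert variety $T_0$, after arranging by a transversality lemma that $Z$ meets $T_0$ at a single point in the open cell. The special fiber $gS_0$ is smooth, so Kodaira stability applies \emph{from} it, deforming lines of $gS_0$ to lines on the nearby fibers $t.Z$ that lie in their smooth loci. This is what makes $\mathcal C_{\sigma(t)}(t.Z)$ well-defined and a genuine regular deformation of $\mathcal C_{x}(S_0)$. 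Then (II) forces each $\mathcal C_{\sigma(t)}(t.Z)$ to be a $P_{\sigma(t)}$-translate of $\mathcal C_{\sigma(t)}(S_0)$, and Proposition \ref{general case - inductive step}(1) gives $g_tS_0 \subset t.Z$; comparing homology classes and dimensions then yields $t.Z = g_tS_0$, hence $Z = t^{-1}g_tS_0$. The last paragraph of your proposal is essentially this conclusion, so once the degeneration direction and the Kodaira stability application are corrected the outline closes; as written, however, the first step does not go through.
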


We will modify the proof of Proposition \ref{general case - inductive step} to get sufficient conditions for a smooth Schubert variety to be Schur rigid (Theorem \ref{sufficient condition for Schur rigidity}). Proposition \ref{existence of perturbation} and Proposition \ref{degeneration} will replace arguments in the proof of Proposition 2.2 of \cite{HoM}. In what follows for a smooth Zariski open subset $W$ of an irreducible projective subvariety  $\Sigma \subset \mathbb P^N$, $N \ge 2$, we say that $W$ is uniruled by lines to mean that there exists a projective line $\ell$ lying on $W$ with semipositive normal bundle.  Equivalently, $W$ is uniruled by lines if and only if the union of lines lying on $W$ covers a nonempty Zariski open subset of $W$.

\begin{proposition} \label{characterization-modification}
Let $S=G/P$ be a rational homogeneous manifold associated to a   simple root, and $S_0 \subset S$
be a smooth Schubert variety.
Assume that   $\mathcal C_x(S_0)$ satisfies {\rm(I)} and {\rm(II)} at a general point $x \in S_0$ as in Proposition \ref{general case - inductive step}. Then, the following holds true.

\begin{enumerate}
\item [{\rm(1)}] Let $Z$ be a reduced and irreducible {\rm(}possibly singular{\rm)}  subvariety of $S$ uniruled by  lines contained in the smooth locus  of $Z$. If $Z$ contains $x$ as a general point with  $\mathcal C_x(Z) = \mathcal C_x( S_0)$, then $S_0$ is contained in $Z$.

\item [{\rm(2)}] Let $\mathcal Z \subset S \times \mathbb C$ be an irreducible {\rm(}possibly singular{\rm)}  subvariety   with the restriction $\pi : \mathcal Z \rightarrow \mathbb C$ of the second projection map $S \times \mathbb C \rightarrow \mathbb C$. If  there is a reduced and smooth quasi-projective subvariety $\widehat S_0 \times \{0\}$ of   $\pi ^{-1}(0) \cap (S_0 \times \{0\})$  with $\dim \widehat{S}_0=\dim S_0$ which is  uniruled by lines contained in $\widehat S_0 \times \{0\}$, then for  $t \in \mathbb C$ sufficiently small, there is $g_t \in G$ such that $g_tS_0 \times \{t\}$ is contained in $\pi^{-1}(t)$. 
\end{enumerate}
\end{proposition}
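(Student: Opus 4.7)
The plan is to adapt the arguments of Proposition 3.2 and Proposition 3.6 of \cite{HoM}, which treat the case where $Z$ (respectively, the deforming member) is smooth, to the more general situation allowed here. In both parts the route is: propagate the coincidence of varieties of minimal rational tangents along chains of minimal rational curves, using conditions (I) and (II) as local rigidity inputs, and then extract the desired global containment by analytic continuation and taking closures.

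For part (1), the Cartan--Fubini-type propagation argument of Proposition~3.2 of \cite{HoM} begins with the equality $\mathcal C_x(Z)=\mathcal C_x(S_0)$ at a general point $x$ and, using conditions (I) and (II), transports this equality along chains of minimal rational curves on $Z$ and on $S_0$; iterating produces a germ of biholomorphism between $(S_0,x)$ and a subgerm of $(Z,x)$, and by analytic continuation together with Zariski closure one concludes that $S_0\subseteq Z$. The only features of $Z$ used are that $x$ lies in the smooth locus $Z^{\mathrm{sm}}$, that there are enough lines through $x$ lying on $Z^{\mathrm{sm}}$, and that $\mathcal C_x(Z)=\mathcal C_x(S_0)$ at the general point $x$; all three are furnished by the hypotheses, so the proof goes through essentially unchanged.

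For part (2), first note that since $\widehat S_0$ is a subvariety of the irreducible Schubert variety $S_0$ of the same dimension $d:=\dim S_0$, it is a dense Zariski open subset of $S_0$; in particular $\mathcal C_x(\widehat S_0)=\mathcal C_x(S_0)$ at a general $x\in\widehat S_0$. Choose an irreducible component $\mathcal Z'$ of $\mathcal Z$ whose intersection with $\pi^{-1}(0)$ contains $\widehat S_0\times\{0\}$. The lines on $\widehat S_0\times\{0\}$ are free inside $\mathcal Z'$, because $\widehat S_0$ is smooth and such lines are minimal rational curves on the smooth $S_0$. Applying Kodaira's stability theorem, they deform to lines lying on fibers $\pi^{-1}(t)$ for $|t|$ small, producing a subvariety $W_t\subset\pi^{-1}(t)$ of dimension $\ge d$ whose smooth locus is uniruled by these deforming lines. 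By semicontinuity, at a general $y_t\in W_t^{\mathrm{sm}}$ the variety of minimal rational tangents $\mathcal C_{y_t}(W_t)$ is a small deformation of $\mathcal C_x(S_0)$ inside $\mathcal C_{y_t}(S)$; condition (II) then yields $g_t\in G$ close to the identity with $\mathcal C_{y_t}(W_t)=\mathcal C_{y_t}(g_tS_0)$. Invoking part (1) for $W_t$ and the translate $g_tS_0$ produces $g_tS_0\subseteq W_t\subseteq \pi^{-1}(t)$, as required.

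The main obstacle is the deformation-theoretic step in part (2): one must verify that the component $W_t$ extracted from the deforming lines has dimension exactly $d$, that its smooth locus is genuinely uniruled by these lines, and that its variety of minimal rational tangents at a general point really is a small deformation of $\mathcal C_x(S_0)$ in $\mathcal C_{y_t}(S)$ so that condition (II) can be invoked. These controls depend essentially on both the smoothness of the Schubert variety $S_0$ (for the semipositivity of normal bundles that activates Kodaira stability) and the Picard-number-one assumption on $S$ (which constrains the ambient geometry of $\mathcal C_{y_t}(S)$), both of which, as the introduction emphasizes, are what make the present arguments break down in the singular or higher-Picard setting.
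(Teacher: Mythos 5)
Your overall route matches the paper's in both parts: part (1) is the straightforward extension of the Cartan--Fubini propagation argument from \cite{HoM}, and part (2) proceeds via Kodaira stability to deform lines to nearby fibers, condition (II) to identify the deformed VMRT with that of a translate $g_tS_0$, and then part (1) to pass from the VMRT coincidence to the containment $g_tS_0\subset\pi^{-1}(t)$. The substance is the same.

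However, part (2) of your write-up leaves two steps under-justified that the paper handles explicitly. First, you assert that the lines on $\widehat S_0\times\{0\}$ are ``free inside $\mathcal Z'$.'' This does not follow merely from $\widehat S_0$ being smooth: one needs the lines to lie in the smooth locus of $\mathcal Z$ and to have semipositive normal bundle there. The paper obtains this by constructing a smooth quasi-projective Zariski dense open $\widehat{\mathcal Z}\subset\mathcal Z$ with $\widehat\pi:\widehat{\mathcal Z}\to\mathbb C$ smooth and $\widehat\pi^{-1}(0)=\widehat S_0\times\{0\}$ (citing the proof of Lemma 1.1 of \cite{MkZh}); on such a subfamily the normal bundle of a line $\ell\subset\widehat S_0$ in $\widehat{\mathcal Z}$ is an extension of $N_{\ell/\widehat S_0}$ by $\mathcal O_\ell$, hence semipositive, and Kodaira stability applies. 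Second, you invoke ``semicontinuity'' to claim that $\mathcal C_{y_t}(W_t)$ at a general $y_t$ is a small deformation of $\mathcal C_x(S_0)$, and then apply condition (II). But condition (II) concerns a regular (holomorphic) family of submanifolds of $\mathcal C_x(S)$, and your freely-chosen general points $y_t$ do not immediately furnish such a family. The paper instead takes a holomorphic section $\sigma:\Delta(\epsilon)\to\widehat{\mathcal Z}$ of $\widehat\pi$ with $\sigma(0)$ a general point of $S_0$, deforms lines through $\sigma(0)$ to lines through $\sigma(t)$, and then takes topological closure to obtain $\bigcup_t\mathcal C_{\sigma(t)}(\widehat\pi^{-1}(t))$ as a regular family of submanifolds of the VMRTs of $S$ to which (II) can be applied. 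Both gaps are fixable along the paper's lines, but as written they are genuine omissions in your proposal.
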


\begin{proof} (1)    By applying the same arguments as in the proof of Proposition \ref{general case - inductive step} (1) (Proposition 3.2 of \cite{HoM}), using the assumption (I) and (II) and deformation theory of minimal rational curves, we get that $S_0$ is contained in $Z$.

(2)   From the smoothness of $   \widehat{ S}_0$ it follows that there is a restriction $\widehat{\pi} : \widehat{\mathcal Z} \rightarrow \mathbb C$ of $\pi:\mathcal Z \rightarrow \mathbb C$ to a smooth quasi projective Zariski dense open subset $\widehat{\mathcal Z}$ of $\mathcal Z$ with $\widehat{\pi}$ being smooth and ${\widehat{\pi}}^{-1}(0)=\widehat{S}_0\times \{0\}$, and $ {\widehat{\pi}}^{-1}(t)$ is contained in the smooth locus of $\pi^{-1}(t)$ for any $t$ (See the proof of Lemma 1.1 of \cite{MkZh}).
 For $\epsilon >0$ sufficiently small there exists a holomorphic section $\sigma: \Delta(\epsilon) \rightarrow \widehat{\mathcal Z}$ of $\widehat{\pi}:\widehat{\mathcal Z} \rightarrow \mathbb C$ over $\Delta(\epsilon)$ such that $\sigma(0)$ is a general point of $S_0$.
 By Kodaira stability \cite{Kd63} any line contained in ${\widehat{\pi}}^{-1}(0)$ passing through $\sigma(0)$ can be deformed to a line in ${\widehat{\pi}}^{-1}(t)$ passing through $\sigma(t)$. By putting together all such lines we get
 $\bigcup_{t \in \Delta(\epsilon)}\mathcal C^0_{\sigma(t)}({\widehat{\pi}}^{-1}(t)) \subset \bigcup_{t \in \Delta(\epsilon)}\left(\mathcal C_{\sigma(t)}(S)\times \{t\}\right)$, where $\mathcal C^0_{\sigma(t)}(\widehat{\pi}^{-1}(t))$ is a Zariski open subset of some subvariety of $\mathcal C_{\sigma(t)}(S) \times \{t\}$.
%
 Taking topological closure in $ \bigcup_{t \in \Delta(\epsilon)}\left(\mathcal C_{\sigma(t)}(S)\times \{t\}\right)$ we obtain $\bigcup_{t \in \Delta(\epsilon)}\mathcal C _{\sigma(t)}(\widehat{\pi}^{-1}(t)) \subset \bigcup_{t \in \Delta(\epsilon)}\left(\mathcal C_{\sigma(t)}(S)\times \{t\}\right)$. Since $\mathcal C _{\sigma(0)}({\widehat{\pi}}^{-1}(0))=\left(\mathcal C_{\sigma(t)}(S)\times \{0\}\right)$ is smooth,    $\bigcup_{t \in \Delta(\epsilon)}\mathcal C _{\sigma(t)}({\widehat{\pi}}^{-1}(t))\rightarrow \mathbb C$ can be regarded as a regular family of submanifolds of $\mathcal C_{x}(S)$. By the assumption (II), there is $g_t \in G$ such that  $\mathcal C _{\sigma(t)}({\widehat{\pi}}^{-1}(t))=\mathcal C _{\sigma(t)}(g_tS_0)\times \{t\}$. By (1), $g_tS_0 \times \{t\}$ is contained in $\pi^{-1}(t)$.
\end{proof}

\begin{remark}

\label{Rem:characterization-modification}
\quad As mentioned in the Introduction, in order to adapt the proof of Proposition 4.3 to a singular Schubert variety $S_0 \subset S$ we need at least to have an ample supply of minimal rational curves on the smooth locus of $S_0$ so that Kodaira Stability Theorem can be applied. However, there exist singular Schubert varieties $S_0$ on certain $S$ such that all projective lines on $S$ lying on $S_0$ must pass through the singular locus of $S_0$.  This is the case, for instance, when $S_0$ is the Lagrangian Grassmannian of rank $\ge 2$ and $S_0 \subset S$ is the subvariety swept out by minimal rational curves on $S$ passing through a fixed base point $x_0 \in S$.  In this case $S_0 \subset S$ is a Schubert variety with a unique isolated singularity $x_0$, and all minimal rational curves on $S$ lying on $S_0$ must pass through the base point $x_0$.

\end{remark}

\begin{proposition} \label{nonexistence}
Let $S=G/P$ be a rational homogeneous manifold associated to a   simple root, and $S_0 \subset S$ be a smooth Schubert variety.
Assume that   $\mathcal C_x(S_0)$ satisfies {\rm(I)} and {\rm(II)} at a general point $x \in S_0$, and $S_0$ intersects $gS_0$ in codimension 2 for any $g \in G$.
 Then there is no reduced irreducible subvariety $Z \subset S$ having homology class $[Z] = r [S_0]$ for any $r \geq 2$.
\end{proposition}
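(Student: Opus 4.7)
The plan is to argue by contradiction, using the machinery of Section 3 to degenerate $Z$ into a sum of \emph{distinct} translates of $S_0$ and then applying Proposition \ref{characterization-modification}(2) along one of those translates to force $Z$ itself to be a single translate, incompatible with $r\ge 2$. Suppose a reduced irreducible $Z\subset S$ with $[Z]=r[S_0]$, $r\ge 2$, exists. Write $S_0=S(w)$ and let $T(w)$ be the opposite Schubert variety, with stabiliser $P_I^-$ and with distinguished subvarieties $\mathcal O=P_I^-.x_w$ and $\mathcal H=L_I.x_w$. By Remark \ref{two remark after prop 3.3}(1) the exceptional locus of the birational morphism $f\colon\widetilde{\mathcal O}\to\overline{\mathcal O}$ is nonempty, so Proposition \ref{existence of perturbation}, combined with Remark \ref{two remark after prop 3.3}(2), produces a $G$-translate $Z'$ of $Z$ meeting $T(w)$ transversely inside the open Bruhat cell $B^-.x_w$ and with $\gamma_*(Z'\cap T(w))=x_1+\cdots+x_r$ a sum of $r$ pairwise distinct points of $\mathcal H$. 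Proposition \ref{degeneration} applied to $Z'$ yields an irreducible family $\mathcal Z\subset S\times\mathbb P^1$ with generic fiber a $\mathbb C^*$-translate of $Z'$ and with special fiber
$$Z_\infty \;=\; g_1S_0+\cdots+g_rS_0,$$
a cycle supported on $r$ pairwise distinct translates of $S_0$ (with $g_ix_w=x_i$).

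The codimension hypothesis is now used to extract a smooth, reduced, uniruled open subset of this special fiber. Put $D:=\bigcup_{i\ge 2}(g_1S_0\cap g_iS_0)$ and $U:=g_1S_0\setminus D$; by assumption each intersection $g_1S_0\cap g_iS_0$ has codimension $\ge 2$ in $g_1S_0$, so $U$ is Zariski open and dense in the smooth variety $g_1S_0$. Because $g_1S_0$ appears with multiplicity one in $Z_\infty$, a neighbourhood of any point of $U$ inside the scheme-theoretic special fiber of $\pi\colon\mathcal Z\to\mathbb P^1$ is reduced and coincides with $g_1S_0$, hence is smooth. The Schubert variety $S_0$ is uniruled by lines of $S$ (this is built into the very formulation of conditions (I) and (II)), so $g_1S_0$ is too; by a dimension count the lines on $g_1S_0$ meeting the codimension-$\ge 2$ subvariety $D$ form a proper sublocus of the whole family of lines on $g_1S_0$, and therefore $U$ is itself uniruled by lines lying on $U$. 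Translating $\mathcal Z$ by $g_1^{-1}$ and reparametrising $\mathbb P^1$ so that $\infty$ becomes $0\in\mathbb C$, the open subset $g_1^{-1}U\subset S_0$ meets every hypothesis of Proposition \ref{characterization-modification}(2).

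Proposition \ref{characterization-modification}(2) then delivers, for every $t$ in a punctured neighbourhood of $0$, an element $h_t\in G$ such that $h_tS_0\subset g_1^{-1}\cdot\pi^{-1}(t)$. For $t\ne 0$ in this neighbourhood, however, $\pi^{-1}(t)$ is a $\mathbb C^*$-translate of the irreducible variety $Z'$, so it is reduced and irreducible of dimension $\dim S_0$; the inclusion is therefore an equality, forcing $\pi^{-1}(t)$ to be a single $G$-translate of $S_0$ with homology class $[S_0]$. This contradicts $[\pi^{-1}(t)]=[Z]=r[S_0]$ with $r\ge 2$.

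The principal obstacle is the passage from the cycle-theoretic degeneration $Z_\infty=\sum g_iS_0$ to a genuinely smooth, reduced, uniruled open locus of the special fiber on which Proposition \ref{characterization-modification}(2) can be invoked; this is precisely where the codimension-two intersection hypothesis, the smoothness of $S_0$ (ensuring $g_1S_0$ is smooth), and the uniruledness of $S_0$ by lines are all used in an essential way. Everything else reduces to citing Proposition \ref{existence of perturbation} and Proposition \ref{degeneration} for the construction of the degeneration itself and to the local-to-global rigidity mechanism of Proposition \ref{characterization-modification}(2).
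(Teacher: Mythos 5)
Your proposal is correct and follows essentially the same route as the paper: perturb $Z$ via Proposition \ref{existence of perturbation} and Remark \ref{two remark after prop 3.3}, degenerate via Proposition \ref{degeneration} to a cycle of $r$ distinct translates of $S_0$, extract a smooth reduced uniruled open subset of the special fiber using the codimension-two hypothesis, and invoke Proposition \ref{characterization-modification}(2) to force $\pi^{-1}(t)$ to be a single translate of $S_0$, contradicting $r\ge 2$. You have spelled out two points the paper treats tersely — the explicit choice $U=g_1S_0\setminus\bigcup_{i\ge2}(g_1S_0\cap g_iS_0)$ as the uniruled open subset (the paper just says ``up to the action of $G$ there is a smooth quasi-projective $\widehat S_0$''), and the final step from the inclusion $h_tS_0\subset g_1^{-1}\pi^{-1}(t)$ to equality by irreducibility and equidimensionality — but these are elaborations, not deviations.
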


\begin{proof} Assume that there exists a   reduced irreducible subvariety $Z $ of $S$ having homology class $[Z] = r[S_0]$ for some $r \geq 2$. Let $T_0$ be the Schubert variety opposite to $S_0$ and  $P_I^-$ be the stabilizer of $T_0$ in $G$. Denote by  $\gamma:\mathcal O \rightarrow \mathcal H$   the projection map from the open $P_I^-$-orbit $\mathcal O$ in $T_0$ to the fixed point set $\mathcal H$ in $\mathcal O$ of the $\mathbb C^*$-action associated to $P_I^-$. Then by Proposition \ref{existence of perturbation} and Remark \ref{two remark after prop 3.3},
there exists a reduced irreducible subvariety $Z' $ of $S$ having homology class $[Z'] = r[S_0]$ and
\begin{enumerate}
\item $Z'$ intersects $T_0$ transversely and
\item $Z' \cap T_0$ is contained in $B^-.x_w $ and
\item $\gamma_*(Z' \cap T_0)=x_1 + \cdots + x_r$, where all $x_i$ are distinct.
\end{enumerate}
As in Proposition \ref{degeneration}, take the closure $\mathcal Z$ in $S \times \mathbb P^1$ of the union $\bigcup_{t \in \mathbb C^*} t.Z' \times \{t\}$.
By Proposition \ref{degeneration}, $\mathcal Z$ is irreducible and $\pi^{-1}(\infty) = (g_1S_0 + \dots + g_r S_0) \times \{\infty\}$.
By the assumption that $S_0$ intersects $gS_0$ in codimension 2, up to the action of $G$, there is a smooth quasi-projective subvariety $\widehat{S}_0 \times \{\infty\}$ of $\pi^{-1}(\infty) \cap (S_0 \times \{\infty\})$ with $\dim \widehat{S}_0 = \dim S_0$ which is uniruled by lines in $\widehat{S}_0 \times \{\infty\}$. Applying Proposition \ref{characterization-modification} (2) with $t$ replaced by $\frac{1}{t}$, for $t$ sufficiently large, we have $t.Z' = g_tS_0$ for some $g_t \in G$. But, then, $[Z']=[S_0]$ while $[Z']=r[S_0]$ with $r \geq 2$, a contradiction. Consequently, there is no  reduced irreducible subvariety $Z  $ of $S$ having homology class $[Z] = r[S_0]$ for some $r \geq 2$.
\end{proof}

\begin{theorem} \label{sufficient condition for Schur rigidity}
Let $S=G/P$ be a rational homogeneous manifold associated to a   simple root, and $S_0 \subset S$ be a smooth Schubert variety.
Assume that   $\mathcal C_x(S_0)$ satisfies {\rm(I)} and {\rm(II)} at a general point $x \in S_0$, and $S_0$ intersects $gS_0$ in codimension 2 for any $g \in G$. Then for any subvariety $Z \subset S$ with homology class $[Z] =r[S_0]$ we have $Z=n_1g_1S_0 + \dots + n_sg_sS_0$, where $g_i \in G$ and $n_i \in \mathbb Z_{+}$.

\end{theorem}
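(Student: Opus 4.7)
The plan is to combine Proposition \ref{nonexistence} with Theorem \ref{local rigidity theorem} via the decomposition of $Z$ into irreducible components. First I would write the effective cycle $Z$ as a finite sum $Z = \sum_{i=1}^s n_i Z_i$ of distinct irreducible subvarieties $Z_i$ with positive integer multiplicities $n_i$, so that $\sum_i n_i [Z_i] = r[S_0]$ in $H_*(S,\mathbb{Z})$.

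Next I would invoke the extremality of the ray $\mathbb{R}_{\geq 0}[S_0]$ recalled in the Introduction: if a sum of classes of effective cycles lies in this ray, each summand does. Applied inductively, this forces $[Z_i] \in \mathbb{R}_{\geq 0}[S_0]$ for every $i$, and since the Schubert classes form an integral basis of $H_*(S,\mathbb{Z})$ (so that $[S_0]$ is primitive along its ray) we may write $[Z_i] = r_i [S_0]$ with $r_i \in \mathbb{Z}_+$ and $\sum_i n_i r_i = r$. By Proposition \ref{nonexistence}---the unique step in which all three hypotheses (I), (II), and the codimension-two condition on $S_0 \cap gS_0$ are simultaneously invoked---no reduced irreducible subvariety can have class $r_i[S_0]$ with $r_i \ge 2$, so each $r_i$ equals $1$. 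Theorem \ref{local rigidity theorem} then forces $Z_i = g_i S_0$ for some $g_i \in G$, giving the desired decomposition $Z = n_1 g_1 S_0 + \dots + n_s g_s S_0$.

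Because the propositions of Sections 3 and 4 already do the substantive work---constructing the $\mathbb{C}^*$-degeneration of a prospective $Z$ to a sum of translates (Proposition \ref{degeneration}) via the refined transversality of Proposition \ref{existence of perturbation}, controlling its limit via Proposition \ref{characterization-modification}, and thereby excluding irreducible representatives of $r_i[S_0]$ for $r_i \ge 2$ in Proposition \ref{nonexistence}---this concluding step is essentially bookkeeping and presents no real obstacle. The only points requiring any care are the extremality of the Schubert ray and the integrality of the coefficients $r_i$, both standard consequences of the fact that Schubert classes form a $\mathbb{Z}$-basis of $H_*(S,\mathbb{Z})$.
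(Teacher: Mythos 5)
Your proof is correct and follows essentially the same route as the paper's: decompose $Z$ into irreducible components, use extremality of the Schubert ray to get $[Z_i] = r_i[S_0]$, apply Proposition~\ref{nonexistence} to force $r_i = 1$, and conclude with Theorem~\ref{local rigidity theorem}. The only difference is that you spell out the extremality and integrality justifications for $[Z_i] = r_i[S_0]$, which the paper leaves implicit.
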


\begin{proof}
Consider the decomposition $Z=m_1Z_1 + \dots + m_lZ_l$  of $Z$ by its irreducible components, where $Z_i$ are reduced and $m_i \in \mathbb Z_{+}$ for all $i$. From $[Z]=r[S_0]$ it follows that $[Z_i]=r_i[S_0]$ for some $r_i$. By Proposition \ref{nonexistence} we have $r_i=1$. By  Theorem \ref{local rigidity theorem}, $Z_i$ is $g_iS_0$ for some $g_i \in G$. Therefore, $Z=m_1g_1S_0 + \cdots + m_lg_lS_0$.
       \end{proof}

\subsection{Intersections of translates}

 It remains to confirm the validity of the conditions in Theorem \ref{sufficient condition for Schur rigidity} for non-linear smooth Schubert varieties $S_0 \subset S$.

Let  $\{ \alpha_1, \dots, \alpha_n\}$ be the system of simple roots of a simple Lie group $L$ and
 let $\{ \varpi_1, \dots, \varpi_{n}\}$ be the system of fundamental weights  of $L$. Let $V_L(\varpi_i)$ denote the irreducible $L$-representation space with the highest weight $\varpi_i$. Take a highest weight vector $v_i$ in $V_L(\varpi_i)$ for $i=1, \dots, n$. We denote the closure   of the $L$-orbit of $[v_i + v_j]$ in $\mathbb P(V_L(\varpi_i) \oplus V_L(\varpi_j))$   by $(L, \alpha_i, \alpha_j)$.


\begin{proposition} [Proposition 3.7 of \cite{HoM}, Theorem 1.3 of \cite{HoK}] \label{Classification of smooth Schubert varieties}

Let $S=G/P$ be a rational homogeneous manifold associated to a simple root. Then a non-linear smooth Schubert variety $S_0$ of $S$ is either a homogeneous submanifold associated to a subdiagram of $S$ or one of the following.

 \begin{enumerate}
 \item[\rm(1)] $S_0=(C_n, \alpha_{i+1}, \alpha_i)$ and $S=(C_m, \alpha_k)$, $2 \leq n \leq m$ and $1 \leq i \leq n-1$ and $m-k=n-i$;
 \item[\rm(2)] $S_0=(C_2, \alpha_2, \alpha_1)$ and $S=(F_4, \alpha_3)$;
 \item[\rm(3)] $S_0 =(B_3, \alpha_2, \alpha_3)$ and $S=(F_4, \alpha_3)$.
 \end{enumerate}

\end{proposition}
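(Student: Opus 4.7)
The plan is to classify non-linear smooth Schubert varieties $S_0 = S(w) \subset S = G/P$ by combining a root-theoretic analysis of the stabilizer with known combinatorial smoothness criteria for Schubert varieties. Fix a representative $w \in W^P$ and let $P_I$ be the stabilizer of $S(w)$ in $G$, with $I = \Phi \cap w(\Delta(P^-))$ as computed in Proposition \ref{the base has positive dimension}. Since $S(w) = \overline{P_I \cdot x_w}$ is the closure of a single $P_I$-orbit, one has the following dichotomy: either $P_I$ acts transitively on $S(w)$, in which case $S(w) \cong P_I/\mathrm{stab}_{P_I}(x_w)$ is genuinely homogeneous, or $S(w)$ properly contains lower-dimensional $P_I$-orbits.

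In the homogeneous branch, I would match the pair $(L_I,\mathrm{stab}_{L_I}(x_w))$ with a rational homogeneous manifold associated to a subdiagram of the Dynkin diagram of $G$: the semisimple part of $L_I$ has Dynkin diagram obtained from that of $G$ by deleting nodes outside $I$, and the marked simple root $\alpha_k$ defining $S$ restricts to a marked node on that subdiagram. This produces the first alternative of the classification, and non-linearity then amounts to requiring the subdiagram homogeneous manifold to be strictly larger than a projective subspace. For the non-homogeneous branch, I would invoke Kumar's tangent-space criterion (or equivalently the Carrell-Peterson rational-smoothness criterion): $S(w)$ is smooth at $x_v$ iff $T_{x_v}S(w) = \bigoplus_{\beta \in E(v,w)} \mathfrak{g}_{-\beta}$ has dimension exactly $\ell(w)$, where $E(v,w) = \{\beta \in \Delta^+ : v \le s_\beta v \le w\}$. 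A case analysis then proceeds by the type of $G$ and the chosen maximal parabolic $P$: in type $A_n$ all smooth Schubert varieties in Grassmannians are sub-Grassmannians (Lakshmibai-Sandhya), hence sub-diagram homogeneous; in types $B,D$ the same conclusion holds for orthogonal Grassmannians; in type $C_n$ the symplectic Grassmannians admit the exotic family $(C_n,\alpha_{i+1},\alpha_i)$ of case (1); and for the exceptional types, direct Weyl-group enumeration isolates the two cases (2) and (3) inside $(F_4,\alpha_3)$ and produces no further examples for $G_2, E_6, E_7, E_8$.

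The main obstacle I anticipate is precisely the case analysis in type $C_n$ and in exceptional type $F_4$, where non-homogeneous smooth Schubert varieties actually exist. Verifying that each of the three exceptional families is smooth reduces to a direct tangent-space computation at every $T$-fixed point in the associated Bruhat interval, using that the stabilizer $P_I$ in each case has a well-understood orbit structure. The harder direction is to show that no further non-homogeneous smooth Schubert varieties exist: this requires a systematic sweep through all $w \in W^P$ for which the stabilizer fails to act transitively, and an explicit elimination of every remaining candidate via the smoothness criterion. The combinatorial sweep, handled by the first author in \cite{HoM} and \cite{HoK}, is extensive but conceptually straightforward once the stabilizer dichotomy and smoothness criterion are both in place.
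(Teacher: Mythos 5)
The proposition you are proving is not proved in the present paper; it is quoted from Proposition~3.7 of \cite{HoM} (long-root case) and Theorem~1.3 of \cite{HoK} (short-root case), so the relevant comparison is with the arguments there. Those references proceed through the geometry of varieties of minimal rational tangents: one analyses the VMRT $\mathcal C_x(S_0)\subset\mathcal C_x(S)$ of the smooth Schubert variety at a general point and classifies, type by type, the linear sections of $\mathcal C_x(S)$ that can arise, recovering the subdiagram homogeneous case and the odd symplectic Grassmannians $(C_n,\alpha_{i+1},\alpha_i)$ and the two $F_4$ cases as the only non-linear possibilities. Your proposal replaces this by a purely combinatorial sweep using the $P_I$-orbit dichotomy together with a tangent-space smoothness criterion. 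That is a genuinely different route, and as a strategy the $P_I$-transitivity dichotomy is sound (if $P_I$ acts transitively then $S(w)=L_I.x_w$ is the subdiagram homogeneous manifold, by Proposition~\ref{the base has positive dimension}).

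However, there is a concrete gap in the combinatorial step that would derail the sweep exactly where the interesting examples live. You assert that Kumar's tangent-space criterion is ``equivalently the Carrell--Peterson rational-smoothness criterion'' and take $|E(v,w)|=\ell(w)$ with $E(v,w)=\{\beta\in\Delta^+:v\le s_\beta v\le w\}$ as the smoothness test. This equivalence is false outside the simply-laced types: rational smoothness and smoothness of Schubert varieties diverge already in type $C_2$, and the Carrell--Peterson count detects only rational smoothness, while the Zariski tangent space at $x_v$ is indexed by a different (and in general larger) set of roots. All three exceptional families in the proposition occur in the non-simply-laced types $C_m$ and $F_4$, so running the sweep with the Carrell--Peterson count would misclassify precisely the cases you most need to get right. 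To carry out your plan you must replace the test by an actual smoothness criterion, e.g.\ the genuine Kumar criterion via the nil-Hecke coefficients, or Polo's tangent-space dimension criterion $\dim T_{x_v}S(w)=\ell(w)$ with the tangent space computed correctly, or (as in the references) a direct geometric argument at the identity point. A secondary slip: the characterisation of smooth Schubert varieties in Grassmannians as sub-Grassmannians is a classical fact not due to Lakshmibai--Sandhya, whose pattern-avoidance theorem concerns the full flag variety of type $A$; the attribution does not affect the logic but should be corrected.
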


\begin{example} We describe embedding of $S_0$ in $S$ for the pairs $(S,S_0)$ in the list (1) - (3) of Proposition \ref{Classification of smooth Schubert varieties}.

(1)
(\cite{Pa})
Take an isotropic basis $\{ e_1 , \cdots, e_{2n}\}$ of $\mathbb C^{2n}$ and extend it to an isotropic basis $\{ e_0, e_1, \cdots, e_{2n}, e_{2n+1} \}$ of $\mathbb C^{2n+2}$.   The embedding $V_{C_n}(\varpi_1)=\mathbb C^{2n} \rightarrow V_{C_{n+1}}(\varpi_1)=\mathbb C^{2n+2}$  induces an embedding  $V_{C_n}(\varpi_{i+1}) \subset \wedge^{i+1}\mathbb C^{2n} \rightarrow V_{C_{n+1}}(\varpi_{i+1})\subset \wedge^{i+1}\mathbb C^{2n+2}$ and an embedding $V_{C_n}(\varpi_i) \subset \wedge^{i }\mathbb C^{2n} \rightarrow V_{C_{n+1}}(\varpi_{i+1}) \subset \wedge^{i+1}\mathbb C^{2n+2}$ given by   $v_1 \wedge \cdots \wedge v_i \mapsto v_1 \wedge \cdots \wedge v_i \wedge e_0$. Combining these two embeddings we get an embedding
\begin{eqnarray*}
V_{C_n}(\varpi_{i+1}) \oplus V_{C_n}(\varpi_i) &\rightarrow& V_{C_{n+1}}(\varpi_{i+1}) \\
(e_1 \wedge \cdots \wedge e_i \wedge e_{i+1} , e_1 \wedge \cdots \wedge e_i )  &\mapsto&  e_1 \wedge \cdots \wedge e_i \wedge e_{i+1} + e_1 \wedge \cdots \wedge e_i \wedge e_{0}  \\
&& = e_1 \wedge \cdots \wedge e_i \wedge (e_0 + e_{i+1}).
\end{eqnarray*}
By taking the closure of $Sp(2n)$-orbit of $ e_1 \wedge \cdots \wedge e_i \wedge (e_0 + e_{i+1})$ in  $V_{C_{n+1}}(\varpi_{i+1})$, we get  an embedding of $S_0=(C_n, \alpha_{i+1}, \alpha_i)$ into $S=(C_{n+1}, \alpha_{i+1})$. Geometrically, $S_0$ is the space of isotropic $(i+1)$-subspaces of $\mathbb C^{2n+2}$ contained in the subspace  $H:=\mathbb C e_0 + \mathbb C^{2n}$ of $\mathbb C^{2n+2}$.

(2) After taking the composition of the embedding of $(C_2, \alpha_2, \alpha_1)$ into $(C_3, \alpha_2)$ as in the case of (1) and the  embedding of $(C_3, \alpha_2)$ into $(F_4, \alpha_3)$ as a homogeneous submanifold, we get an embedding of $(C_2, \alpha_2, \alpha_1)$ into $(F_4, \alpha_3)$

(3)
We recall some facts on the projective geometry of the rational homogeneous manifold $\mathbb {OP}_0^2$ of type $(F_4, \alpha_4)$ (Proposition 6.6 and Proposition 6.7 of   \cite{LM}).
 Let $\mathcal J_3(\mathbb O)$ be the space of $3 \times 3$ $\mathbb O$-Hermitian symmetric matrices
$$ \mathcal J_3(\mathbb O) = \left\{ B= \left(
 \begin{array}{ccc}
 r_1 &\overline{x_3} & \overline{x_2} \\
 x_3 & r_2 & \overline{x_1} \\
 x_2 & x_1 & r_3
 \end{array}\right), r_i \in \mathbb C, x_j \in \mathbb O \right\}   $$
 and let $\mathcal J_3(\mathbb O)_0:=\{ B\in \mathcal J_3(\mathbb O) : \text{tr} B=0\}$.
Then we have   $V_{E_6}(\omega_1)=\mathcal J_3(\mathbb O)$ and $V_{F_4}(\omega_4)=\mathcal J_3(\mathbb O)_0$ and
\begin{eqnarray*}
   \mathbb {OP}_0^2  :=\mathbb P\{ A \in \mathcal J_3(\mathbb O)_0 : A^2=0\} \subset \mathbb P(V_{F_4}(\omega_4))
 \end{eqnarray*}
 is the rational homogeneous manifold of type $(F_4, \alpha_4)$.

Let $ A \in \mathcal J_3(\mathbb O)_0 $ be such that $A^2=0$. Then the affine tangent space $\widehat{T}_{[A]}\mathbb {OP}_0^2$ of $\mathbb {OP}_0^2$ at $[A]$ has a filtration
$$\mathbb C A \subset  \widehat{T}_{[A],1}\mathbb {OP}_0^2\subset \widehat{T}_{[A]}\mathbb {OP}_0^2$$
invariant under the isotropy group of $G$ at $[A]$,
where
 \begin{eqnarray*}
 \widehat{T}_{[A]}\mathbb {OP}_0^2&=& \{B \in \mathcal J_3(\mathbb O)_0 : AB+BA=0\}\\
 \widehat{T}_{[A],1}\mathbb {OP}_0^2&=& \{B \in \mathcal J_3(\mathbb O)_0 : AB=0\}.
 \end{eqnarray*}
 Furthermore,  as a representation of $Spin (7)$, $\widehat{T}_{[A]}\mathbb {OP}_0^2$ is decomposed as $\mathbb C A \oplus V_{B_3}(\omega_3) \oplus V_{B_3}(\omega_1)$ and  $ \widehat{T}_{[A],1}\mathbb {OP}_0^2  $ is decomposed as $\mathbb C A \oplus V_{B_3}(\omega_3)$. Therefore,  we have an embedding of $\mathbb C A \oplus V_{B_3}(\omega_3)$ into $V_{F_4}(\omega_4)$.

 From this   embedding
 we get an  embedding of $V_{B_3}(\omega_3)$ into $V_{F_4}(\omega_3)$   defined by
 $$B \in V_{B_3}(\omega_3) \mapsto A \wedge B \in V_{F_4}(\omega_3) \subset \wedge^2 (V_{F_4}(\omega_4))$$
   and an  embedding  of  $V_{B_3}(\omega_2) \subset  \wedge^2( V_{B_3}(\omega_3))$ into $V_{F_4}(\omega_3) \subset  \wedge^2(V_{F_4}(\omega_4) ) $.
   Combining these two embeddings we get an embedding
 \begin{eqnarray*}
 V_{B_3}(\omega_2) \oplus V_{B_3}(\omega_3) &\rightarrow& V_{F_4}(\omega_3) \subset \wedge^2 V_{F_4}(\omega_4) \\
 (C \wedge B,  B ) \,\,\,\, \qquad &\mapsto& C \wedge B + A \wedge B =(C+A) \wedge B.
 \end{eqnarray*}
 By taking the closure of the $Spin(7)$-orbit of $(C+A) \wedge B$ in $V_{F_4}(\omega_3) $ we get an  embedding of $S_0 =(B_3, \alpha_2, \alpha_3)$ into $S =(F_4, \alpha_3)$. Geometrically, $S_0$ is the space of $2$-subspaces $\langle C_1, C_2\rangle$ of $\mathcal J_3(\mathbb O)_0$  contained in the subspace $H:=\{B \in \mathcal J_3(\mathbb O)_0: AB=0\}$ of $\mathcal J_3(\mathbb O)_0$, where $C_1^2=C_2^2=C_1C_2=0$.
\end{example}

\begin{proposition} \label{I II codimension}

Let $S=G/P$ be a rational homogeneous manifold associated to a   simple root, and $S_0 \subset S$ be a smooth non-linear  Schubert variety.
Then   $\mathcal C_x(S_0)$ satisfies {\rm(I)} and {\rm(II)} at a general point $x \in S_0$, and $S_0$ intersects $gS_0$ in codimension 2 for any $g \in G$ with $S_0 \not=gS_0$.
\end{proposition}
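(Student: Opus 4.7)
The proof splits naturally into two independent tasks: verifying conditions (I) and (II) at a general point $x \in S_0$, and proving the codimension bound $\operatorname{codim}_{S_0}(S_0 \cap gS_0) \ge 2$ for all $g \in G$ with $gS_0 \ne S_0$. For the first task I would appeal directly to the classification in Proposition \ref{Classification of smooth Schubert varieties}. When $S_0$ is a homogeneous submanifold associated to a subdiagram, (I) and (II) at a general point were verified case-by-case in Hong--Mok \cite{HoM}; for the three non-homogeneous families $(C_n,\alpha_{i+1},\alpha_i) \subset (C_m,\alpha_k)$, $(C_2,\alpha_2,\alpha_1) \subset (F_4,\alpha_3)$, and $(B_3,\alpha_2,\alpha_3) \subset (F_4,\alpha_3)$, both conditions are established in Hong--Kim \cite{HoK}. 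So the only genuinely new content is the codimension statement.

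For the codimension estimate, let $P_I$ denote the stabilizer of $S_0 = S(w)$ in $G$, so that $gS_0 = S_0$ if and only if $g \in P_I$. By the Bruhat decomposition $G = \coprod_\sigma P_I \dot\sigma P_I$, it suffices to estimate $\operatorname{codim}_{S_0}(S_0 \cap \dot\sigma S_0)$ for each representative $\dot\sigma$ of a nontrivial double coset in $W_I \backslash W / W_I$. After applying the $P_I$-symmetry one may arrange that $g$ fixes the base point $x_w$, so that $S_0 \cap gS_0$ is a $B$-stable subvariety, hence a union of Schubert subvarieties whose dimensions are computable from the double coset data. The non-linearity of $S_0$ will enter through the fact that $\mathcal C_{x}(S_0)$ is a positive-dimensional nondegenerate subvariety of $\mathbb P(T_x S_0)$, which in turn forces the incidence-defined drop $\dim S_0 - \dim(S_0 \cap gS_0)$ to be at least $2$.

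To carry this out concretely, I would use the standard models supplied by the classification. In the homogeneous subdiagram situation one has a model of incidence type: for instance, if $S = Gr(k, \mathbb C^m)$ and $S_0 = Gr(k, V)$, then $gS_0 = Gr(k, gV)$ and $S_0 \cap gS_0 = Gr(k, V \cap gV)$; the codimension in $S_0$ is $k \cdot (\dim V - \dim(V \cap gV))$, and non-linearity forces $k \ge 2$, giving codimension at least $2$. Analogous incidence descriptions in the symplectic, orthogonal, and minuscule/cominuscule exceptional models handle all remaining homogeneous subdiagrams. For the three non-homogeneous families, I would exploit the explicit embeddings given in the Example: e.g.\ family (1) is the variety of isotropic $(i+1)$-spaces contained in a fixed codimension-one hyperplane $H \subset \mathbb C^{2m}$, and distinct translates correspond to distinct hyperplanes, so $S_0 \cap gS_0$ parameterizes isotropic $(i{+}1)$-spaces contained in $H \cap gH$ with the ambient incidence conditions; a direct dimension count from the Schubert-cell structure of the symplectic Grassmannian yields codimension $\ge 2$. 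Families (2) and (3) are analyzed in the same spirit via the $\mathcal J_3(\mathbb O)$ description of $\mathbb {OP}_0^2$.

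The main obstacle is the uniformity of the argument. Conceptually, non-linearity of $S_0$ should force a jump of at least two in codimension as $gS_0$ moves off $S_0$, because the variety of minimal rational tangents $\mathcal C_x(S_0)$ is nondegenerate and (by condition (I)) projectively rigid inside $\mathcal C_x(S)$. Turning this heuristic into a type-free proof is not obvious, and in practice one is forced back on the model-by-model dimension counts outlined above; the most delicate verifications are the exceptional subdiagrams and the three non-homogeneous families, where the Schubert-cell combinatorics have to be tracked explicitly.
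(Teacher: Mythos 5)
Your treatment of conditions (I) and (II) is the same as the paper's: cite the classification (Proposition \ref{Classification of smooth Schubert varieties}) and the case-by-case verifications in \cite{HoM} and \cite{HoK}. Likewise for the three non-homogeneous families (1)--(3), your intended use of the explicit incidence models (isotropic subspaces contained in a hyperplane $H$, with $\dim(H\cap gH)<\dim H$, etc.) is exactly what the paper does. The gap is in the homogeneous-subdiagram case, and in fact you acknowledge it yourself by conceding that you are ``forced back on model-by-model dimension counts'' without a uniform mechanism.

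The missing idea is a reduction to the simple reflections $s_\gamma$ for $\gamma$ adjacent to the subdiagram, which the paper achieves by a $B$-double-coset argument on $G/P_I$ rather than by surveying all of $W_I\backslash W/W_I$ as you propose. Concretely, set $G_1=\{g\in G:\dim(S_0\cap gS_0)\ge\dim S_0-1\}$. Since $S_0$ is $B$-stable, $\dim(S_0\cap bgb'S_0)=\dim(S_0\cap gS_0)$ for $b,b'\in B$, so $G_1$ is a union of $(B,B)$-double cosets, and its image $\mathcal K_1\subset G/P_I$ is $B$-invariant. If $G_1\supsetneq P_I$ then $\mathcal K_1$ is positive-dimensional, and by the Bruhat stratification of $G/P_I$ it must contain a one-dimensional $B$-orbit, i.e.\ one indexed by a simple root $\gamma\in\Lambda$. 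So it suffices to show $\dim(S_0\cap s_\gamma S_0)\le\dim S_0-2$ for $\gamma\in\Lambda$, and this is done by a uniform Lie-algebra computation at the base point $x_0$: the isotropy action of $s_\gamma$ permutes root spaces, and non-linearity of $S_0$ guarantees at least two roots $\alpha$ with $\frak g_\alpha\subset T_{x_0}S_0$ and $\langle\alpha,\gamma\rangle\neq 0$, each of which is moved out of $T_{x_0}S_0$. Your proposal never finds this reduction, so the ``main obstacle'' you flag is genuinely unresolved in your argument.

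A secondary but real error: you assert that, after normalizing $g$ within its double coset, ``$S_0\cap gS_0$ is a $B$-stable subvariety, hence a union of Schubert subvarieties whose dimensions are computable.'' This is false in general: $S_0$ is $B$-stable but $gS_0$ is only $gBg^{-1}$-stable, so the intersection is merely $(B\cap gBg^{-1})$-stable (hence $T$-stable), not $B$-stable, and you cannot read off its dimension from Schubert combinatorics in the way you suggest. The paper avoids this entirely: it only needs a tangent-space estimate at the single $T$-fixed point $x_0$ (which lies in $S_0\cap s_\gamma S_0$ because $s_\gamma\in P$ for $\gamma\in\Lambda$), not a global Bruhat description of the intersection.
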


\begin{proof}
By Proposition \ref{Classification of smooth Schubert varieties}, a non-linear smooth Schubert variety $S_0$ of $S$ is either a homogeneous submanifold associated to a subdiagram of $S$ or one of the followings.

 \begin{enumerate}
 \item[\rm(1)] $S_0=(C_n, \alpha_{i+1}, \alpha_i)$ and $S=(C_m, \alpha_k)$, $2 \leq n \leq m$ and $1 \leq i \leq n-1$ and $m-k=n-i$;
 \item[\rm(2)] $S_0=(C_2, \alpha_2, \alpha_1)$ and $S=(F_4, \alpha_3)$;
 \item[\rm(3)] $S_0 =(B_3, \alpha_2, \alpha_3)$ and $S=(F_4, \alpha_3)$.
 \end{enumerate}
In   the proof of Proposition \ref{Classification of smooth Schubert varieties} we have already  proved that properties (I) and (II) hold for a non-linear smooth Schubert variety   $S_0$ (See Proposition 3.3 and Proposition 3.5 of \cite{HoM} for a homogeneous submanifold associated to a subdiagram of $S$, Proposition 4.3 and Lemma 4.4 of \cite{HoM} for (1), and Proposition 3.4 of \cite{HoK} for (2) and (3)). This completes the proof of the first statement.

To prove the second statement, we first consider the case when  $S_0$ is
a   homogeneous submanifold associated to a subdiagram $\mathcal D_0$ of  $\mathcal D(S)$.
Then the stabilizer of $S_0$ in $G$ is the parabolic subgroup $P_I$, where $\Lambda$ is the set  of simple roots in $\mathcal D(S)- \mathcal D_0$  which are adjacent to $\mathcal D_0$ and  $I$ is  the complement of $\Lambda$ in the set of simple roots in $\mathcal D(S)$. Therefore, we have an isomorphism $\{gS_0:g \in G\} \simeq G/P_I$.

Let $s_{\gamma}$ denote  the element in $\mathcal W^{P_I}$ given by the simple reflection with respect to $\gamma$.
We claim that if $s_{\gamma}S_0 \cap S_0$ has dimension $\leq \dim S_0 -2$ for any simple root $\gamma$ in $\Lambda$, then $gS_0 \cap S_0$ has dimension $\leq \dim S_0 -2$ for any $g \in G$ with $S_0 \not=gS_0$.

To prove the claim, put
\begin{eqnarray*}
G_1&:=&\{ g \in G : \dim (S_0 \cap gS_0) \geq \dim S_0 -1 \} \\
\mathcal K_1&:=&\{gP_I \in G/P_I: g \in G_1\}.
\end{eqnarray*}
Then $G_1$ contains the stabilizer $P_I $ of $S_0$.  Since $B$ stabilizes $S_0$, for $b,b'\in B$ and $g \in G$, we have
$S_0 \cap bgb'S_0 = bS_0 \cap bgb'S_0 = b(S_0 \cap gb'S_0) = b(S_0 \cap gS_0) $ so that $\dim(S_0 \cap bgb'S_0) = \dim(S_0 \cap gS_0)$.
As a consequence, $G_1$ is a union of double $B$-cosets.

    If $  G_1$  contains   $P_I$  properly,    then $ \mathcal K_1$ is a positive-dimensional  subvariety of $G/P_I$ which is  invariant by $B$.
    By the Bruhat decomposition of $G/P_I$,  the closure of   a  $B$-orbit in $G/P_I$ of dimension $k$ has at least one $B$-orbit of dimension $j$ for each  $j \leq  k$. Hence there is a $B$-orbit  in $\mathcal K_1$ of dimension one.
 A one-dimensional $B$-orbit in  $G/P_I$ corresponds to a simple root  in $\Lambda$, and thus there is $\gamma \in \Lambda$ such that  $\dim (S_0 \cap s_{\gamma}S_0) \geq \dim S_0 -1$.    This completes the proof of the claim.
 In the remaining part of the proof we will show that $s_{\gamma}S_0 \cap S_0$ has dimension $\leq \dim S_0 -2$ for any simple root $\gamma$ in $\Lambda$. \\ 

Let $x_0$ denote the base point of $G/P$ with the isotropy group $P$ and let $\alpha_k$ be the simple root associated to $P$. Then the tangent space  $T_{x_0}S_0$ is generated by the root vectors of roots  with zero coefficients   in the simple roots in $\Lambda$ and a negative  coefficient in $\alpha_k$. The isotropy action of  $s_{\gamma}$ on $T_{x_0}S$ is given by mapping an element  in the root space $\frak g_{\alpha}$ to an element in  the root space $\frak g_{s_{\gamma}(\alpha)}$.
 If $S_0$ is not linear, then for any $\gamma \in \Lambda$, there are at least two roots   $\alpha$ with $\frak g_{\alpha} \subset T_{x_0}S_0$ such that $\langle \alpha, \gamma \rangle \not=0$. 
  Furthermore, for $\gamma \in \Lambda$ with $\langle \alpha, \gamma \rangle \not=0$, we have  $\frak g_{s_{\gamma}(\alpha)} =\frak g_{\alpha - \langle \alpha, \gamma \rangle \gamma} \not\subset T_{x_0}S_0$.  Thus, for any $\gamma \in \Lambda$,
$T_{x_0}S_0 \cap T_{x_0}(s_{\gamma} S_0)$ has dimension $\leq \dim T_{x_0}S_0-2$ and $s_{\gamma}S_0 \cap S_0$ has dimension $\leq \dim S_0 -2$.

If $(S_0, S)$ is of type (1)  in Proposition \ref{Classification of smooth Schubert varieties}, then $S_0$ consists of isotropic $k$-subspaces of $V=\mathbb C^{2n+2}$ which are contained in a hyperplane $H$ of $V$. Thus $S_0 \cap gS_0$  consists of isotropic $k$-subspaces of $V=\mathbb C^{2n+2}$ which are contained in the intersection $H \cap gH$. If $S_0 \not=gS_0$, then $H \cap gH$ is a proper subspace of $H$ and thus we have $\dim (S_0 \cap gS_0) \leq \dim S_0-2$.
The proof is similar when $(S_0, S)$ is of type (2)  in Proposition \ref{Classification of smooth Schubert varieties}

If $(S_0,S)$ is of type (3) in Proposition \ref{Classification of smooth Schubert varieties}, then $S_0$ consists of $C_1 \wedge C_2$, where $C_1, C_2 \in H:=\{ B \in \mathcal J_3(\mathbb O)_0 : AB=0 \}$  
are such that $C_1^2=C_2^2=C_1C_2=0 $.  If $S_0 \not= gS_0$, then $H \cap gH =\{ B \in \mathcal J_3(\mathbb O)_0  : AB=(gA)B=0\}$ is a proper subspace of $H$ and thus we have $\dim (S_0 \cap gS_0) \leq \dim S_0-2$.
\end{proof}

\vskip 10 pt
\noindent {\it Proof of Theorem \ref{main theorem I}.}
 From Theorem \ref{sufficient condition for Schur rigidity} and Proposition \ref{I II codimension} it follows that for any non-linear smooth Schubert variety $S_0$ of a rational homogeneous manifold $S$ of Picard number one, the pair $(S,S_0)$ is Schur rigid.
\qed \\

\subsection{Maximal linear spaces}
For obvious reasons linear Schubert variety is not Schur rigid if  it is not a maximal linear space. In this section we will show that a maximal linear Schubert variety is Schur rigid with some trivial exceptions. \\

Let $(S,S_0)$ be a pair consisting of a rational homogeneous manifold $S = G/P$ of Picard number one and a smooth Schubert variety $S_0 \subset S$.  We denote by $0 \in S_0$ a reference point lying on the unique open $B$-orbit.  Recall that $(S,S_0)$ is {\it Schubert rigid} if and only if the following holds.  Any complex submanifold $Z \subset W$ of some connected open subset $W \subset S$ must necessarily be an open subset of a translate $gS_0$ of $S_0$ by some $g \in G$ whenever for a point $x$ belonging to some nonempty open subset $U \subset W$ there exists some $g \in \text{\rm Aut}(S)$, where $g$ depends on $x$, such that $g(0) = x$ and such that $dg(T_0S_0) = T_xZ$. The latter condition may also be formulated more geometrically by stating that
$Z$ is tangent at every point $x \in U$ to some translate $gS_0$, $g \in G$, at the point $g(0) = x$. Here ``open'' means ``open with respect to the complex topology''.  In what follows we are concerned solely with the cases where $S_0 \subset S$ is a maximal linear subspace.
From the discussion in the Introduction it follows that Schubert rigidity is a necessary condition for Schur rigidity.

 \begin{theorem} [cf. Theorem 1.3 of \cite{HoPa}] \label{thm Hong park}  Let $S=G/P$ be a rational homogeneous manifold associated to a simple root and let $S_0$ be a maximal linear space in $S$. Then $(S,S_0)$ is Schubert rigid   whenever $(S, S_0)$ is not of the type belonging to any of the classes given by
 \begin{enumerate}
 \item $S=(B_{n}, \alpha_k)$ $(k \leq n-2)$ and $S_0 =\mathbb P^{n-k}${\rm ;}
 \item $S=(C_{n}, \alpha_{n})$ and $S_0=\mathbb P^1${\rm ;}
 \item $S=(F_4, \alpha_1)$ and $S_0=\mathbb P^2${\rm ;}
 \item $S=(G_2, \alpha_2)$ and $S_0=\mathbb P^1$.
 \end{enumerate}

\noindent
Furthermore, when $(S,S_0)$ is of the type belonging to any one of the classes (1)--(3) in the above, $(S,S_0)$ is not Schubert rigid.  More precisely, there exists a nonlinear irreducible projective subvariety $Z \subset S$ such that $Z$ is tangent at every smooth point to some maximal linear subspace $gS_0$, $g \in G$, of $S$.
 \end{theorem}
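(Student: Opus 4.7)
The plan is to translate the Schubert rigidity question into an integrability problem for the Schubert differential system $\mathcal{D}_{S_0}$ associated to $S_0$: at each point $x$ in the union of translates $gS_0$, the fiber $(\mathcal{D}_{S_0})_x \subset \mathrm{Gr}(\dim S_0, T_xS)$ parametrizes all linear subspaces of the form $T_x(gS_0)$ with $x \in gS_0$, and a submanifold $Z \subset W$ is tangent at every point to some translate $gS_0$ if and only if its Gauss map factors through $\mathcal{D}_{S_0}$. Schubert rigidity then amounts to the statement that every such germ of integral submanifold is an open piece of a single translate $gS_0$.

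For the positive direction, I would adapt the Lie algebra cohomology method of Bryant and Hong. At the base point $0 \in S_0$ one identifies $T_0 S$ with the quotient $\mathfrak{g}/\mathfrak{p}$, and $T_0 S_0$ as a specific abelian subspace arising from the linearity of $S_0$. Any formal deformation of $T_0 S_0$ inside the fiber $(\mathcal{D}_{S_0})_0$ is measured by a first Lie algebra cohomology class with values in the isotropy representation on $\mathrm{Hom}(T_0 S_0, T_0 S / T_0 S_0)$; the vanishing of the relevant graded component of this cohomology, combined with a Frobenius-type integration in the spirit of Proposition \ref{general case - inductive step}, forces any integral submanifold to be an open piece of some $gS_0$. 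Via Kostant's theorem the cohomology computation becomes combinatorial in the root system of $G$, and a case-by-case check over the classification of maximal linear Schubert varieties $S_0 \subset S$ singles out precisely the four exceptional families (1)--(4) as those where the relevant cohomology does not vanish. The main obstacle here is the combinatorial bookkeeping of weights in the types $B$, $C$, $F_4$, and $G_2$, which is delicate because the maximal linear $S_0$ sits at the boundary of where the Lie algebra cohomology technique is applicable.

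For the negative direction, for each of the three classes (1)--(3) I would exhibit an explicit irreducible projective subvariety $Z \subset S$ that is nonlinear yet tangent at every smooth point to some maximal linear $gS_0$. In case (1), with $S = OG(k, 2n+1)$ and $S_0 \cong \mathbb{P}^{n-k}$ consisting of isotropic $k$-planes containing a fixed isotropic $(k-1)$-plane $V_{k-1}$ and lying inside a fixed maximal isotropic subspace $W_n$, one lets $V_{k-1}$ vary along a nonlinear subvariety of the flag manifold of isotropic $(k-1)$-planes in $W_n$ and takes the total incidence variety, verifying tangency by inspecting the Levi decomposition at each smooth point. Cases (2) and (3) admit analogous constructions using the geometry of the Lagrangian Grassmannian and the $F_4$-geometry $(F_4, \alpha_1)$ respectively, where one exploits nontrivial one-parameter families of maximal linear subspaces cut out by quadric sections inside the space of translates. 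The main obstacle here is to verify tangency at every smooth point simultaneously with nonlinearity, which requires an explicit description of the Schubert differential system $\mathcal{D}_{S_0}$ in each of the exceptional types; in particular the $(F_4, \alpha_1)$ case will demand a careful use of the Jordan-algebra model of the exceptional geometry.
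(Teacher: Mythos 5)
The paper does not actually prove this theorem: it cites Theorem 1.3 of \cite{HoPa} and restates it with two corrections (removing the spurious case $k=n-1$ of class (1) where $S_0$ is not maximal linear, and noting that the original proof of the $(G_2,\alpha_2)$ case is flawed and remains open). So there is no internal proof to compare against, only the external reference.

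That said, your proposed route for the positive direction has a genuine problem which the paper itself flags. You propose to prove Schubert rigidity by a Lie algebra cohomology computation (Bryant--Hong--Kostant style), but the introduction of this very paper states that outside compact Hermitian symmetric spaces ``neither of the two steps (1) and (2) can be reduced to Lie algebra cohomology computations'' --- and step (2) is precisely Schubert rigidity. Several of the spaces in the theorem, e.g.\ $(B_n,\alpha_k)$ with $2\le k\le n-2$, $(F_4,\alpha_1)$, and $(G_2,\alpha_2)$, are not Hermitian symmetric, so the Kostant-theorem combinatorial reduction you rely on is not available there. Hong--Park instead work through the VMRT / Cartan--Fubini framework (condition (II), i.e.\ rigidity of the VMRT under local deformations in $\mathcal{C}_x(S)$, followed by the deformation-theoretic argument of Proposition~\ref{general case - inductive step}), which is a genuinely different mechanism. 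Even in the Hermitian symmetric case your statement that ``vanishing of the relevant cohomology $\ldots$ combined with Frobenius-type integration forces any integral submanifold to be an open piece of some $gS_0$'' is too strong as stated: the $H^1$ vanishing rules out infinitesimal deformations of the model tangent space but by itself does not give uniqueness of higher-order integral varieties of the Schubert differential system, which requires a separate integration argument.

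Your negative direction --- constructing for each of (1)--(3) a nonlinear irreducible $Z$ tangent everywhere to translates of $S_0$ --- is consistent in spirit with what Hong--Park actually do (the paper notes their construction on p.~2354 works for (1)--(3) and fails for (4)), so that half of the plan is on the right track, though the explicit constructions you sketch would still need to be checked against those of \cite{HoPa}.
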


Theorem \ref{thm Hong park} is a restatement of Theorem 1.3 of \cite{HoPa} with   corrections.
    When $(S,S_0)=((B_n, \alpha_{k}), \mathbb P^{n-k})$ for $k=n-1$,  $S_0$ is not a maximal linear space and this case was included mistakenly in the list of exceptional cases in Theorem 1.3 of \cite{HoPa}.
 When $(S,S_0) =((G_2, \alpha_2), \mathbb P^1)$, the arguments in the proof of Lemma 5.1 of \cite{HoPa} do not work.

 The method of constructing counterexamples explained in p. 2354 of \cite{HoPa} works for the cases (1)--(3) in Theorem \ref{thm Hong park}  but does not work for the case (4) in Theorem \ref{thm Hong park}. It remains  open whether this is Schubert rigid  or not.

 We note that Corollary 1.2 of \cite{MkZh}, which is logically a consequence of Main Theorem of \cite{MkZh} and Theorem 1.3 of \cite{HoPa}, should also be amended accordingly.

\begin{example} \label{example maximal linear spaces in F4 type} 

  The odd Spinor variety $\mathbb S_{B_3}$, the rational homogeneous manifold of type $(B_3, \alpha_3)$, has two types of $\mathbb P^3$'s: one is a homogeneous submanifold  associated to a subdiagram  of the Dynkin diagram of $\mathbb S_{B_3}$ and the other is not. We denote the first by $\mathbb P^3_{B_2}$ and the second by $\mathbb P^3_{A_3}$.

In fact, the odd Spinor variety $\mathbb S_{B_3}$ is biholomorphic to the even Spinor variety $\mathbb S_{D_4}$, the rational homogeneous manifold of type $(D_4, \alpha_4)$.
There are two types of $\mathbb P^3$'s in $\mathbb S_{D_4}$,  each of which is a  homogeneous submanifold    associated to a  subdiagram  $\mathcal D_0$  of the Dynkin diagram $\mathcal D$  of $\mathbb S_{D_4}$ (Theorem 4.9 of \cite{LM}).
In both cases, the subdiagram $\mathcal D_0$ is of type $A_3$, but only the first one still is associated to a subdiagram of the Dynkin diagram of $\mathbb S_{B_3}$, which is of type $B_2$.  Under the biholomorphism from $\mathbb S_{B_3}$ to $\mathbb S_{D_4}$, the first corresponds to $\mathbb P^3_{B_2}$ and the second  corresponds to $\mathbb P^3_{A_3}$.

   The odd Spinor variety $\mathbb S_{B_3}$ is a homogeneous submanifold of the rational homogeneous manifold $S$ of type $(F_4 ,\alpha_3)$. Thus $\mathbb P^3_{B_2}$ and $\mathbb P^3_{A_3}$ are linear spaces in $S$, too, and both of them are maximal.

  The odd Spinor variety $\mathbb S_{B_3}$ is isomorphic to  the space of non-generic lines on the rational homogeneous manifold $\mathbb {OP}_0^2$ of type $(F_4, \alpha_4)$ passing through a point (Proposition 6.5 of \cite{LM}). Thus $\mathbb P^3_{B_2}$ and $\mathbb P^3_{A_3}$ in $\mathbb S_{B_3}$  correspond  to two types of $\mathbb P^4$'s in $\mathbb {OP}_0^2$, the first is not maximal and the second is maximal. We denote the second by $\mathbb P^4_{A_4}$.
\end{example}

\begin{proposition} \label{F4 not homologically rigid} Let $(S,S_0)$ be one of the following:
\begin{enumerate}
\item $S=(F_4, \alpha_3)$ and $S_0=\mathbb P^3_{B_2}$
\item $S=(F_4, \alpha_3)$ and $S_0 =\mathbb P^3_{A_3}$
\item $S=(F_4, \alpha_4)$ and $S_0=\mathbb P^4_{A_4}$,
\end{enumerate}
 where $\mathbb P^3_{B_2}$ and $\mathbb P^3_{A_3}$ and $\mathbb P^4_{A_4}$ are given in Example \ref{example maximal linear spaces in F4 type}.
Then $(S,S_0)$ is not homologically rigid.
\end{proposition}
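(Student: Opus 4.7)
The plan is, for each of the three pairs $(S,S_0)$, to exhibit an irreducible subvariety $Z\subset S$ with $[Z]=[S_0]$ in $H_*(S,\mathbb{Z})$ which is \emph{not} a $G$-translate of $S_0$. For cases (1) and (2) I take $Z$ to be the ``other'' type of maximal linear $\mathbb{P}^3$ inside the homogeneous submanifold $\mathbb{S}_{B_3}\subset S=(F_4,\alpha_3)$ described in Example \ref{example maximal linear spaces in F4 type}: in case (1) I set $Z:=\mathbb{P}^3_{A_3}$ and in case (2) I set $Z:=\mathbb{P}^3_{B_2}$. Since $\mathbb{P}^3_{B_2}$ and $\mathbb{P}^3_{A_3}$ are homogeneous submanifolds of $\mathbb{S}_{B_3}$ associated to subdiagrams of genuinely different types ($B_2$ and $A_3$), their stabilizers in $F_4$ are non-conjugate parabolics, so the two sit in distinct $F_4$-orbits and $Z\neq gS_0$ for every $g\in G$. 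For case (3) I take $Z:=\mathbb{P}^4_{B_2}$, the non-maximal $\mathbb{P}^4$ in $S=(F_4,\alpha_4)=\mathbb{OP}_0^2$ produced from $\mathbb{P}^3_{B_2}\subset\mathbb{S}_{B_3}$ through the identification of $\mathbb{S}_{B_3}$ with the family of non-generic lines through a fixed point of $\mathbb{OP}_0^2$; maximality of $\mathbb{P}^4_{A_4}$ together with the non-maximality of $\mathbb{P}^4_{B_2}$ inside $S$ prevents $Z$ from being a translate of $S_0$.

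The substantive task is then to verify the equality of homology classes $[Z]=[S_0]$. My plan is to realize this equality by a flat one-parameter degeneration in $S$ whose general fibre is a translate of $S_0$ and whose special fibre is (a translate of) $Z$. Inspired by Section 3, such a degeneration is natural to look for by choosing a cocharacter $\lambda\colon\mathbb{C}^*\to T$ of the maximal torus compatible with the $F_4$-Bruhat structure and tracking the $\mathbb{C}^*$-limits via the Bialynicki-Birula decomposition of Proposition \ref{BB decomposition for S}. Once the family is in hand, flatness over the base forces $[Z]=[S_0]$. An alternative, dual route is to compute $[Z]-[S_0]$ directly in the Schubert basis of $H^*(S,\mathbb{Z})$ using the $F_4$-Weyl group combinatorics and verify its vanishing.

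The hard part is producing the degeneration, or equivalently pinning down the class identification. Although $Z$ and $S_0$ are both manufactured from the same exceptional configuration inside $\mathbb{S}_{B_3}$ or $\mathbb{OP}_0^2$ displayed in Example \ref{example maximal linear spaces in F4 type}, and although in the biholomorphic model $\mathbb{S}_{D_4}$ the two types of $\mathbb{P}^3$ are exchanged by triality, triality does \emph{not} extend to an automorphism of the ambient $S=(F_4,\alpha_3)$; similarly $\mathbb{P}^4_{A_4}$ and $\mathbb{P}^4_{B_2}$ in $\mathbb{OP}_0^2$ are not linked by any $F_4$-automorphism. The equality $[Z]=[S_0]$ is therefore a feature of the ambient $F_4$-geometry itself, and producing either the explicit $\mathbb{C}^*$-degeneration or the direct Schubert-basis cancellation is where the real work lies.
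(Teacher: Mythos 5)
There is a genuine gap, and the choice of $Z$ is the wrong one. For cases (1) and (2) you take $Z$ to be the maximal linear $\mathbb P^3$ of the \emph{other} type, i.e.\ $Z=\mathbb P^3_{A_3}$ when $S_0=\mathbb P^3_{B_2}$ and vice versa. But $\mathbb P^3_{B_2}$ and $\mathbb P^3_{A_3}$ lie in \emph{different} connected components $\mathcal H_1$ and $\mathcal H_2$ of the space of $\mathbb P^3$'s in $\mathbb S_{D_4}\cong\mathbb S_{B_3}$; these are distinct $D_4$-orbits, and since $\mathbb S_{D_4}$ is a six-dimensional quadric, the two families of maximal linear spaces already represent \emph{different} generators of $H_6(\mathbb S_{D_4},\mathbb Z)$. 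There is therefore no a priori reason why $[Z]=[S_0]$ should hold in $H_6(S,\mathbb Z)$, and you never establish it. You acknowledge this is ``where the real work lies,'' but in fact the entire content of the proposition is that equality, so the proposal does not close the proof (and may be attacking a false equality).

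The paper's proof sidesteps this by choosing $Z$ inside the \emph{same} $D_4$-orbit $\mathcal H_1$ as $S_0=\mathbb P^3_{B_2}$, namely a $\mathbb P^3$ in the \emph{open} $B_3$-orbit of $\mathcal H_1$ rather than in the closed $B_3$-orbit containing $\mathbb P^3_{B_2}$. Because $\mathcal H_1$ is irreducible, $Z$ is algebraically equivalent to $S_0$ and hence $[Z]=[S_0]$ for free. The non-translatability is then proved by a different mechanism than stabilizer-conjugacy: the $F_4$-invariant filtration of $TS$ coming from the gradation attached to the marked diagram $(F_4,\alpha_3)$ has a minimal nonzero distribution $D$; $\mathbb P^3_{B_2}$ is everywhere tangent to $D$, while a $\mathbb P^3$ in the open $B_3$-orbit of $\mathcal H_1$ is not, so since $\Aut(S)=F_4$ preserves $D$ the two cannot be $F_4$-translates. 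Your stabilizer/parabolic argument for ``$Z\ne gS_0$'' was only needed because your $Z$ was of a genuinely different combinatorial type; with the paper's $Z$ that argument would also fail (same Schubert type), which is why the distribution $D$ is the essential tool. If you want to salvage your route, you would have to actually compute both classes in the Schubert basis of $H_6((F_4,\alpha_3),\mathbb Z)$ and verify they coincide, which is precisely the computation the paper's choice of $Z$ is designed to avoid.
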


\begin{proof}

The space of  $\mathbb P^3$'s in $\mathbb S_{D_4}=\mathbb S_{B_3}$ has two connected components, $\mathcal H_1$ and $\mathcal H_2$, the former  containing $\mathbb P^3_{B_2}$ and the latter containing $\mathbb P^3_{A_3}$.
Each $\mathcal H_i$  is an orbit of $D_4$ and
is the union of two $B_3$-orbits, one is closed and the other is open  (Theorem 4.9 and Remark 5.10 of \cite{LM}).
 The closed $B_3$-orbit in $\mathcal H_1$ consists of translates of $\mathbb P^3_{B_2}$ by the group $B_3$. Thus $\mathbb P^3$'s in the open $B_3$-orbit in  $\mathcal H_1$ have the same homology class as $\mathbb P^3_{B_2}$ but are not translates of $\mathbb P^3_{B_2}$ by the group $B_3$.

 Consider $\mathbb S_{B_3}$ as a homogeneous submanifold of the rational homogeneous manifold $S$ of type $(F_4, \alpha_3)$ as in Example \ref{example maximal linear spaces in F4 type}. Then we have $\mathbb P^3_{B_2} \subset \mathbb S_{B_3} \subset S$ and we may regard $\mathcal H_1$ as the space of $\mathbb P^3$'s in $S$ contained in $\mathbb S_{B_3}$.
We claim that
$\mathbb P^3$'s in the open $B_3$-orbit in $\mathcal H_1$ are not translates of $\mathbb P^3_{B_2}$ by the group $F_4$.

For the proof of the claim observe first of all that the holomorphic tangent bundle $TS$ is equipped with a nontrivial $F_4$-invariant filtration corresponding to the gradation of the Lie algebra of $F_4$ defined by the marked Dynkin diagram $(F_4,\alpha_3)$.
Let $0 \neq D \subset TS$ be the minimal proper distribution in the filtration.  Taking intersection of distributions in the filtration with the holomorphic tangent bundle $T{\mathbb S_{B_3}}$ we obtain a 2-step filtration corresponding to the marked Dynkin diagram $(B_3,\alpha_3)$.  Let $0 \neq D' \subset T{\mathbb S_{B_3}}$ be the minimal proper distribution in the filtration.  We have $D' = D|_{\mathbb S_{B_3}} \cap T{\mathbb S_{B_3}}$.

 Let $\mathbb P$ be an element of $\mathcal H_1$.  Then, $\mathbb P$ is in the closed $B_3$-orbit of $\mathcal H_1$ if and only if $\mathbb P$ is tangent to $D'$ at every point $x \in \mathbb P$.   Let $\mathbb Q \in \mathcal H_1$ be an element of the open $B_3$-orbit in $\mathcal H_1$.  Then, $\mathbb Q$ is not tangent to $D'$, hence it is not tangent to $D$.  However, since $\mathbb P^3_{B_2}$ is tangent to $D'$ and hence {\it a fortiori} to $D$ everywhere on $\mathbb P^3_{B_2} \subset S$, and since $\Aut(S) = F_4$ preserves $D$, for any $\varphi \in F_4$, $\varphi(\mathbb P^3_{B_2})$ must be everywhere tangent to $D$, and it follows that $\mathbb Q \neq \varphi(\mathbb P^3_{B_2})$ for any $\varphi \in F_4$, proving the claim.  Hence, $\mathbb P^3_{B_2} $ is not homologically rigid in $S$.
  By the same arguments $\mathbb P^3_{A_3}$ is not   homologically rigid in $S$.

  Similarly, $\mathbb P^3_{A_4}$ is not homologically rigid in the rational homogeneous manifold of type $(F_4, \alpha_4)$.
\end{proof}

\begin{proposition} \label{maximal linear space case} Let $S=G/P$  be a rational homogeneous manifold associated to a simple root and let
$S_0$ be a maximal linear space in $S$. Then $(S,S_0)$ is Schur rigid except when $S$ is associated to a long root and  $S_0$ is a homogeneous submanifold of $S$ associated to a subdiagram $\mathcal D(S_0)$ of the marked Dynkin diagram $\mathcal D(S)$ of the following type:
\begin{enumerate}

\item $S=(B_n, \alpha_k)$ and $S_0=\mathbb P^{n-k}$ with $\Lambda =\{\alpha_{n}\}$ for $k=1$ and with $\Lambda =\{\alpha_{k-1}, \alpha_n\}$ for $2\leq k \leq n-2$;
\item $S=(C_n, \alpha_n)$ and $S_0=\mathbb P^1$ with $\Lambda =\{\alpha_{n-1}\}$;
\item $S=(F_4, \alpha_1)$ and $S_0=\mathbb P^2$ with $\Lambda =\{\alpha_3\}$;
\item $S=(G_2, \alpha_2)$ and $S_0 = \mathbb P^1$ with  $\Lambda =\{\alpha_1\}$,

\end{enumerate}
where $\Lambda$ denotes the set of simple roots in $\mathcal D(S) -\mathcal D(S_0)$ which are adjacent to $\mathcal D(S_0)$, or, when $S$ is associated to a short root and
$S_0$ is of the following form:
\begin{enumerate}
\item [(5)] $S=(F_4, \alpha_3)$ and $S_0 =\mathbb P^3_{B_2}$;
\item [(6)] $S=(F_4, \alpha_3)$ and $S_0 =\mathbb P^3_{A_3}$;
\item [(7)] $S=(F_4, \alpha_4)$ and $S_0 =\mathbb P^4_{A_4}$,
\end{enumerate}
where $\mathbb P^3_{B_2}$ and $\mathbb P^3_{A_3}$ and $\mathbb P^4_{A_4}$ are given in Example  \ref{example maximal linear spaces in F4 type}.
\end{proposition}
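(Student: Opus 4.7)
The plan is to invoke Theorem \ref{sufficient condition for Schur rigidity} for every pair $(S,S_0)$ outside the exception list (1)--(7), and to exhibit explicit counterexamples for each pair on the list. The verification of the hypotheses of Theorem \ref{sufficient condition for Schur rigidity} in the linear setting rests on three ingredients: condition (I), condition (II), and the codimension-$2$ intersection property for translates of $S_0$.

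For pairs $(S,S_0)$ not on the list I would proceed as follows. Property (I) is essentially vacuous when $S_0$ is a maximal linear subspace, since $\mathcal C_x(S_0)\subset \mathcal C_x(S)$ is a projective linear subspace whose Zariski tangent space at any point coincides with $\mathcal C_x(S_0)$ itself, so any $h\in P_x$ fixing the tangent plane at a general point fixes $\mathcal C_x(S_0)$. Property (II) would be extracted from Schubert rigidity: by Theorem \ref{thm Hong park}, outside the long-root exceptions (1)--(4) the pair $(S,S_0)$ is Schubert rigid, and this translates at the level of VMRTs into the statement that every local deformation of $\mathcal C_x(S_0)$ in $\mathcal C_x(S)$ arises from the action of the isotropy group $P_x$. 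The codimension-$2$ intersection condition would be checked by the same Bruhat-theoretic argument as in the proof of Proposition \ref{I II codimension}: it suffices to show that for each simple root $\gamma \in \Lambda$ at least two roots $\alpha$ with $\mathfrak g_\alpha \subset T_{x_0}S_0$ satisfy $\langle\alpha,\gamma\rangle\neq 0$. A direct enumeration using the marked Dynkin diagrams of the rational homogeneous manifolds of Picard number one shows that this last condition fails precisely for the pairs listed in (1)--(4), and a similar diagrammatic analysis in the $F_4$ setting isolates the pairs (5)--(7).

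For the exceptional pairs I would construct explicit counterexamples. For the short-root cases (5)--(7), Proposition \ref{F4 not homologically rigid} produces an irreducible subvariety $Z\subset S$ with $[Z]=[S_0]$ that is not a $G$-translate of $S_0$; since $Z$ is irreducible and not a translate, it cannot be written as a sum of translates of $S_0$, and Schur rigidity fails. For cases (1)--(3) the counterexample construction at p.~2354 of \cite{HoPa} yields a nonlinear irreducible projective subvariety of $S$ tangent at every smooth point to some maximal linear subspace of $S$ of the same type as $S_0$; intersecting with a Schubert cycle of complementary dimension identifies its homology class as a positive-integer multiple of $[S_0]$, contradicting Schur rigidity. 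Case (4), namely $(G_2,\alpha_2)$ with $S_0=\mathbb P^1$, is included in the list because the codimension-$2$ intersection condition automatically fails for $S_0=\mathbb P^1$ (two distinct lines through a common point intersect in codimension~$1$ in $S_0$), preventing the application of Theorem \ref{sufficient condition for Schur rigidity}; a counterexample can still be produced by taking the union of all lines through a fixed point, which has the appropriate multiple of $[S_0]$ as its homology class but is not a sum of translates of $S_0$.

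The main obstacle will be the combinatorial verification that (1)--(4) is exactly the long-root list for which the codimension-$2$ condition fails, together with the compatibility check for the short-root exceptions (5)--(7). In the latter, $S_0$ remains a homogeneous submanifold associated to a subdiagram and the local VMRT combinatorics would na\"ively predict Schur rigidity; the failure must instead be traced to a global phenomenon, namely the two $\operatorname{Aut}(\mathbb S_{D_4})$-orbits of $\mathbb P^3$'s on $\mathbb S_{D_4}\cong\mathbb S_{B_3}\subset (F_4,\alpha_3)$, which is invisible at the tangent level but yields an irreducible subvariety of $S$ sharing the homology class of $S_0$ without being a translate. Ensuring that the classification of maximal linear Schubert varieties $S_0\subset S$ (including the non-subdiagram linear spaces $\mathbb P^3_{A_3}$, $\mathbb P^4_{A_4}$ arising in $F_4$) is complete, and that the two types of exceptions (combinatorial vs. global) are exhaustive, constitutes the most delicate part of the argument.
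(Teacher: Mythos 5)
Your overall skeleton matches the paper (invoke Theorem \ref{sufficient condition for Schur rigidity} outside the exceptional list; exhibit counterexamples on the list), but several of the supporting steps either take a route the paper does not take without justification, or are actually incorrect.

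First, your derivation of condition (II) from Schubert rigidity via Theorem \ref{thm Hong park} is not established and is not how the paper proceeds. Schubert rigidity concerns the uniqueness of integral varieties of the Schubert differential system, whereas condition (II) is an infinitesimal statement about deformations of $\mathcal C_x(S_0)$ inside $\mathcal C_x(S)$; the two are logically independent, and the implication you assert needs its own proof. The paper instead invokes Lemma 4.1 of \cite{HoM} for the long-root case and verifies (II) directly, case by case, from the explicit description of $\mathcal C_x(S)$ and its maximal linear subspaces in the short-root case (forms 1.a, 3.a, 3.b). Related to this, you do not address the forms 1.b and 2.b, in which $S_0$ contains no general line so that Theorem \ref{sufficient condition for Schur rigidity} cannot be applied directly; the paper handles these by embedding $S$ into another rational homogeneous manifold $S'$ in which $S_0$ is a Schubert variety and reducing to Schur rigidity of $(S',S_0)$.

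Second, your counterexample for case (4), $(G_2,\alpha_2)$ with $S_0=\mathbb P^1$, is dimensionally wrong: the union of all lines through a fixed point is a cone of dimension $\dim\mathcal C_x(S)+1 \ge 2$, which cannot represent a multiple of the 1-dimensional class $[S_0]$. The paper's argument is different and elementary: take the minimal embedding $S\subset\mathbb P^N$ and cut with a general linear subspace of complementary dimension; this produces an irreducible smooth curve of degree $k>1$, giving $[Z]=k[S_0]$ with $Z$ irreducible and nonlinear.

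Third, your remark that in the short-root exceptions (5)--(7) ``$S_0$ remains a homogeneous submanifold associated to a subdiagram'' is incorrect for (6) and (7): $\mathbb P^3_{A_3}$ and $\mathbb P^4_{A_4}$ are precisely the maximal linear spaces that are not associated to subdiagrams of the ambient marked Dynkin diagram (this is the point of Example \ref{example maximal linear spaces in F4 type}). The correct mechanism for the failure of homological (hence Schur) rigidity in (5)--(7) is the one in Proposition \ref{F4 not homologically rigid}: the two $\operatorname{Aut}(\mathbb S_{D_4})$-orbits of $\mathbb P^3$'s on $\mathbb S_{D_4}\cong\mathbb S_{B_3}$ collapse to a single $B_3$-orbit closure with an open and a closed stratum, and the $F_4$-invariant filtration of $TS$ distinguishes the two.
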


\begin{proof} Let $S=G/P$  be a rational homogeneous manifold associated to a simple root and let
$S_0$ be a maximal linear space in $S$.
As in the proof of Theorem \ref{main theorem I}, we  will apply Theorem \ref{sufficient condition for Schur rigidity} to get the Schur rigidity.
  The condition (I)  in Section \ref{sect: local characterizations}  is satisfied  because  $\mathcal C_x(S_0)$ is a linear space.  It suffices to show that  if $(S,S_0)$ is not in the list (1)--(7), then
\begin{enumerate}
\item [$\bullet$] $\mathcal C_x(S_0)$ satisfies the condition    ${\rm (II)}$ in Section \ref{sect: local characterizations} at a general point $x \in S_0$  and
\item [$\bullet$] $S_0$ intersects $gS_0$ in codimension 2 for any $g \in G$. \\
\end{enumerate}

Assume that $S$ is associated to a long simple root. Then a linear Schubert variety $S_0$ of $S$ is a homogeneous submanifold associated to a subdiagram $\mathcal D_0$ of $\mathcal D(S)$, and  $\mathcal C_x(S_0)$ satisfies (II)     (Lemma 4.1 of \cite{HoM}).

 If $(S, S_0)$ is not of the form (1) - (4), then,     for any $\gamma \in \Lambda$, there are at least two roots   $\alpha$ with $\frak g_{\alpha} \subset T_{x_0}S_0$ such that $\langle \alpha, \gamma \rangle \not=0$. Therefore, as in the proof of Proposition \ref{I II codimension}, $T_xS_0 \cap T_x(s_{\gamma}S_0)$ has codimension $\geq 2$ in $T_xS_0$ and hence
 $S_0 \cap gS_0$ has codimension $\geq 2$ for any $g \in G$ with $S_0 \not=gS_0$.
 By Theorem \ref{sufficient condition for Schur rigidity}, $(S,S_0)$ is Schur rigid.

If $(S,S_0)$ is one of the forms in (1) - (3), it is not Schubert rigid   by Theorem \ref{thm Hong park}  and thus is not Schur rigid.  If $(S,S_0)$ is  of the form (4), then $S_0$ is a minimal rational curve in $S$. 
 Schur rigidity must fail for $(S,S_0)$ whenever $S$ is
not linear and $S_0$ is a minimal rational curve (and a Schubert cycle) for the following elementary observation.
Embed $S$ into a projective space $\mathbb P^N$ by the minimal canonical embedding (i.e., by $\mathcal O(1)$) and suppose the homology
class of $S$ in $\mathbb P^N$ is $k$ times the positive generator of the $H_{2n}(S,\mathbb Z)$, where $n = \dim(S)$.  Then, a general linear section of dimension 1 is a reduced and irreducible smooth curve of degree  $k > 1$.
Hence the pair $(S, S_0)$ is not Schur rigid. This completes the proof for the case where $S$ is associated to a long root.  \\

Assume that  $S$ is associated to a short simple root. Then
$\mathcal C_x(S)$ and its maximal linear spaces are given as follows  (\cite{LM}, Lemma 5.1 of \cite{HoPa}): \\

\begin{enumerate}
\item [1.] $(C_{\ell}, \alpha_k), k \geq 2$:
  $\mathcal C_x(S) $ is $\mathbb P( \{ u \otimes q + c u^2 : u \in U, q \in Q, c \in \mathbb C \}  ) \subset \mathbb P((U \otimes Q) \oplus S^2U)$ and its maximal linear space is

\vskip 5 pt
\begin{enumerate}
  \item[1.a] $\mathbb P(\{u \otimes q + cu^2: q \in Q, c \in \mathbb C \}) \simeq \mathbb P^{2m}$ for some $u \in U$ or
  \item[1.b] $\mathbb P(\{u  \otimes   q:u \in U\}) \simeq \mathbb P^{k-1} $ for some $ q  \in  Q $
  \end{enumerate}

\vskip 5 pt \noindent
 where $U$ is a vector space of dimension $k$ and $Q$ is a vector space of dimension $2m:=2 \ell-2k$. \\

\item  [2.] $(F_4, \alpha_3)$:
$\mathcal C_x(S)$ is $\mathbb P(\{e^* \otimes q + (e_1^* \wedge e_2^*) \otimes q^2 :e \wedge e_1 \wedge e_2=0 , e, e_1, e_2 \in E, q \in Q\})$ and its maximal linear space is

\vskip 5 pt
\begin{enumerate}
\item[2.a] $\mathbb P^2$'s in $\mathbb P(\{e^* \otimes q + (e_1^* \wedge e_2^*) \otimes q^2: e\wedge e_1\wedge e_2=0, e,e_1,e_2 \in E\}) \simeq \mathbb Q^4$ for some $q \in Q$ or

  \item[2.b] $ \mathbb P(\{e^* \otimes  q: q \in Q\}) \simeq \mathbb P^1$ for some $e^* \in E^*$
 \end{enumerate}

\vskip 5 pt \noindent
where $E$ is a vector space of dimension $3$ and $Q$ is a vector space of dimension $2$. \\

\item [3.]  $(F_4, \alpha_4)$:
$\mathcal C_x(S) $ is the closure of $L$-orbit of $[v_1 + v_2]$ in $\mathbb P(V_1 \oplus V_2)$ and its maximal linear space is

\vskip 5 pt
\begin{enumerate}
   \item[3.a] $\mathbb P^4=$ the cone over $\mathbb P^3_{B_2}$ in $\mathbb S_{B_3} \subset \mathbb P(V_1)$ with the vertex at $[v_2]$ and its $L$-translates, or
   \item[3.b] $\mathbb P^3=$ the cone over $\mathbb P^2$ in $\mathbb Q^5 \subset \mathbb P(V_2)$ with the vertex at $[v_1]$ and its $L$-translates, or
   \item[3.c] $\mathbb P^3_{A_3}$'s  in $\mathbb S_{B_3}   \subset \mathbb P(V_1)$,
 \end{enumerate}

\vskip 5 pt

 \noindent
   where $L$, the simple group of type $B_3$,  is the semisimple part of the isotropy group of $G$ at $x$, and $V_1$ is the spin representation of $L$ and $V_2$ is the standard representation of $L$, and $v_1$ is a highest weight vector of $V_1$ and $v_2$ is a highest weight vector of $V_2$, and $\mathbb S_{B_3}  $ is the highest weight orbit in $\mathbb P(V_1)$ and $\mathbb Q^5$ is the highest weight orbit in $\mathbb P(V_2)$. \\
\end{enumerate}

If $\mathcal C_x(S_0)$ is  of the form  1.b or 2.b, then  $S_0$ does not contain a general line and  Theorem \ref{sufficient condition for Schur rigidity} cannot apply directly.
However, there is an embedding of $S$ into another  rational homogeneous manifold $S'$ in such a way that $S_0$ is a Schubert variety of $S'$, and the homological rigidity of $(S,S_0)$ follows from the homological rigidity of $(S', S_0)$ (See the proof of Proposition 3.6 of \cite{HoM}).   Since  $(S', S_0)$ is Schur rigid, so is $(S,S_0)$.

  If $\mathcal C_x(S_0)$ is of the form 1.a, then any local deformation of $\mathcal C_x(S_0)$ is of the same form $\mathbb P(\{u' \otimes q + c{u'}^2: q \in Q, c \in \mathbb C \}) \simeq \mathbb P^{2m}$ but for  a different  $u' \in U$. Thus $\mathcal C_x(S_0)$ satisfies the condition (II).  Since $\mathcal C_x(S_0) \cap \mathcal C_x(gS_0)$ is empty, $S_0 \cap gS_0$ has dimension $\leq 0$ because $S_0$ is linear. From  $ \dim S_0  \geq 2$ it follows that $S_0 \cap gS_0$ has codimension $\geq 2$ for any $g \in G$.

If $\mathcal C_x(S_0)$ is of the form 3.a or 3.b, any deformation of $\mathcal C_x(S_0)$ in $\mathcal C_x(S)$ is again a maximal linear space in $\mathcal C_x(S_0)$ and thus is an $L$-translate of $\mathcal C_x(S_0)$. Hence $\mathcal C_x(S_0)$ satisfies the condition (II). The space of $\mathbb P^n$'s in $\mathbb Q^{2n}$ has two connected components, and any two $\mathbb P^n$ in the same connected component intersects in codimension two. Thus,
the intersection $\mathcal C_x(S_0)\cap\mathcal  C_x(gS_0)$ has  codimension $\geq 3$ in $\mathcal C_x(S_0)$. It follows that $S_0 \cap gS_0$ has  codimension $\geq 3$ in $S_0$ because $S_0$ is linear.

Therefore, if $\mathcal C_x(S_0)$ is neither of the form 2.a nor of the form 3.c, then $(S,S_0)$ is Schur rigid by Theorem \ref{sufficient condition for Schur rigidity}. \\

If  $\mathcal C_x(S_0)$ is of the form 2.a, then $S_0$ is contained in the homogeneous submanifold $S_1$ of $S$ associated to the subdiagram of type $(B_3, \alpha_3)$ obtained by $\Lambda=\{\alpha_4\}$. Thus, $S_0$ is either $\mathbb P^3_{B_2}$   or  $\mathbb P_{A_3}^3$ embedded in $S_1\simeq \mathbb S_{B_3}$.
If  $\mathcal C_x(S_0)$ is of the form 3.c, then $S_0$ is $\mathbb P^4_{A_4}$.
Therefore,  if $\mathcal C_x(S_0)$ is of the form 2.a or 3.c, i.e., $(S,S_0)$ is one of the forms (5) - (7), then it is   not homologically rigid by Proposition \ref{F4 not homologically rigid} and thus it  is not Schur rigid.
\end{proof}

\begin {thebibliography} {XXX}

\bibitem {BB} A. Bialynicki-Birular, \emph{Some theorem on actions of algebraic groups}, Ann. Math.  98  (1973) 480--497.

\bibitem {BB76} A. Bialynicki-Birular, \emph{Some properties of the decompositions of algebraic varieties determined by actions of a torus}, Bull. Acad. Polon. Sci. S\'er. Sci. Math. Astronom. Phys.  24  (1976)  no. 9, 667--674.


 \bibitem{BrPo99} M. Brion and P. Polo, \emph{Generic singularities of certain Schubert varieties}, Math. Z.   231  (1999) 301--324

 \bibitem {Br} M. Brion, \emph{Lectures on the geometry of flag varieties}, Topics in cohomological studies of algebraic varieties,  Trends Math., Birkh\"auser, Basel, 2005, pp.\,33--85.

\bibitem {B} R. Bryant, \emph{Rigidity and quasi-rigidity of extremal
cycles in compact Hermitian symmetric spaces}, math. DG/0006186.


\bibitem{CoRo13} I. Coskun and C. Robles, \emph{Flexibility of Schubert classes},  Differential Geom. Appl.  31  (2013),  no. 6, 759--774.

\bibitem{Co11} I. Coskun, \emph{Rigid and non-smoothable Schubert classes}, J. Differential Geom.  87  (2011),  no. 3, 493--514.

 \bibitem{Co14} I. Coskun, \emph{Rigidity of Schubert classes in orthogonal Grassmannians}, Israel J. Math.  200  (2014),  no. 1, 85--126.

  \bibitem{Co18} I. Coskun, \emph{Restriction varieties and the rigidity problem},  Schubert varieties, equivariant cohomology and characteristic classes IMPANGA 15, 49--95, EMS Ser. Congr. Rep., Eur. Math. Soc., Z\"urich, 2018.

\bibitem{Deo} V. V. Deodhar, \emph{On some geometric aspects of Bruhat orderings. I. A finer decomposition of Bruhat cells} Invent. Math. 79 (1985)  499--511.

\bibitem{Fu} W. Fulton, Intersection Theory,  Ergebnisse der Mathematik und ihrer Grenzgebiete. 3. Folge. A Series of Modern Surveys in Mathematics [Results in Mathematics and Related Areas. 3rd Series. A Series of Modern Surveys in Mathematics], 2. Springer-Verlag, Berlin, 1998.

\bibitem{HRT} R. Hartshorne, E. Rees, and E. Thomas, \emph{Nonsmoothing of algebraic cycles on Grassmann varieties}, Bull. Amer. Math. Soc. 80 (1974)  847--851.

\bibitem{HoK} J. Hong and M. Kwon, {\it Rigidity of smooth Schubert varieties in a rational homogeneous manifold associated to a short root}, arXiv:1907.09694.

\bibitem {HoM09} J. Hong and N. Mok, \emph{Analytic continuation of holomorphic maps respecting varieties of minimal rational tangents and applications to rational homogeneous manifolds},  J. of Differential Geom.   86  (2010) no.3, 539--567.

\bibitem {HoM} J. Hong and N. Mok, \emph{Characterization of smooth Schubert varieties in rational homogeneous manifolds   of Picard number 1}, J. Algebraic Geom. 22 (2013) no. 2, 333--362.

\bibitem{HoPa} J. Hong and K.-D. Park, \emph{Characterization of Standard Embeddings between Rational
Homogeneous Manifolds of Picard Number 1}, Int. Math. Res. Notices,   2011  (2011) 2351--2373.

\bibitem{Ho05} J. Hong, \emph{Rigidity of singular Schubert varieties in $Gr(m, n)$}, J. Differential Geom.  71  (2005)  no. 1, 1--22.

\bibitem {Ho07} J. Hong, \emph{Rigidity of smooth Schubert varieties in Hermitian symmetric spaces}, Trans. Amer. Math. Soc. 359 (2007) 2361--2381.

\bibitem  {HwM98} J.-M. Hwang   and N. Mok,  \emph{Rigidity of irreducible Hermitian symmetric spaces of the compact type under Kahler deformation}, Invent. Math.  131 (1998)  no. 2, 393--418.

\bibitem  {HwM99} J.-M. Hwang and N. Mok, \emph{Varieties of minimal rational tangents on uniruled projective manifolds}, in:M. Schneider, Y.-T. Siu (Eds.), Several complex variables, MSRI publications, Vol 37, Cambridge University Press, 1999, pp. 351--389.

 \bibitem {HwM01} J.-M.  Hwang and N. Mok, \emph{Cartan-Fubini type extension of holomorphic maps for Fano manifolds of Picard number 1}, J. Math. Pure Appl.   80 (2001) 563--575.

 \bibitem {HwM02} J.-M.  Hwang and N. Mok, \emph{Deformation rigidity of the rational homogeneous space associated to a long simple root}, Ann. Scient. \'Ec. Norm. Sup., $4^e$ s\'erie, t. 35. (2002)  173--184.

 \bibitem {HwM04b} J.-M.  Hwang and N. Mok,  \emph{Deformation rigidity of the 20-dimensional $F\sb 4$-homogeneous space associated to a short root}. Algebraic transformation groups and algebraic varieties, 37--58, Encyclopedia Math. Sci., 132, Springer, Berlin, 2004.

\bibitem  {HwM05} J.-M. Hwang and N. Mok, \emph{Prolongations of infinitsimal linear automorphisms of projective varieties and rigidity of rational homogeneous spaces of Picard number 1 under K\"ahler deformation}, Invent. Math. 160 (2005)  no. 3, 591--645.

\bibitem{Ke}G. R.  Kempf, \emph{On the collapsing of homogeneous bundles}, Invent. Math. 37 (1976) 229--239.

\bibitem {K}  S. Kleiman, \emph{The transversality of a general translate}, Compos. Math. 28 (1974)  287--297.

\bibitem {Kd63} K. Kodaira, \emph{On stability of compact submanifolds of complex manifolds}, Amer. J. Math.   85 (1963) 79--94.

\bibitem{Ko63} B.  Kostant, \emph{Lie algebra cohomology and generalized Schubert cells}. Ann. Math  77  (1963) no.1, 72--144.

\bibitem {LM} J.M Landsberg and L. Manivel, \emph{On the projective geometry of rational homogeneous varieties},  Comment. Math. Helv. 78 (2003)  no. 1, 65--100.

\bibitem{Ma}  O. Mathieu : Formules de caract\`eres pour les alg\`ebres de Kac-Moody g\'en\'erales,
Ast\'erisque   159-160  (1988).


\bibitem{MkZh} N. Mok and Y.  Zhang, \emph{Rigidity of pairs of rational homogeneous spaces of Picard number 1 and analytic continuation of geometric structures on uniruled manifolds},   J. Differential Geom. 112 (2019) 263--345.

\bibitem{Mk16} N. Mok, \emph{Geometric structures and substructures on uniruled projective manifolds}, in:  P. Cascini, J. McKernan and J.V.Pereira (Eds.) Foliation Theory in Algebraic Geometry (Simons Symposia), Springer-Verlarg,  2016, pp.103--148.

\bibitem{Pa} Pasquier, B.:  On some smooth projective two-orbit  varieties with Picard number one. Math. Ann.  344 (2009) no. 4,   963--987.

\bibitem{RoT12} C. Robles and D. The, \emph{Rigid Schubert varieties in compact Hermitian symmetric spaces},  Selecta Math. (N.S.)  18  (2012)  no. 3, 717--777.

 \bibitem{Ro13} C.  Robles, \emph{Schur flexibility of cominuscule Schubert varieties}, Comm. Anal. Geom.  21  (2013) no. 5, 979--1013.

\bibitem{Sp} T. A. Springer, Linear algebraic groups, second edition, Progress in Math. 9, Birkh\"auser Boston, 1998.

 \bibitem{W} M. Walters, \emph{Geometry and uniqueness of some extremal subvarieties in complex Grassmannians}, Ph.D thesis, U. Michigan, 1997.

\end {thebibliography}

\end{document}